\numberwithin{equation}{section}
\newtheorem{theorem}{Theorem}[section]
\newtheorem{lemma}[theorem]{Lemma}
\newtheorem{proposition}[theorem]{Proposition}
\newtheorem{corollary}[theorem]{Corollary}
\theoremstyle{definition}
\theoremstyle{remark}
\newtheorem{remark}[theorem]{Remark}
\theoremstyle{problem}
\newtheorem{problem}[theorem]{Problem}
\newcommand{\qq}{{\mathbb Q}}
\newcommand{\rr}{{\mathbb R}}
\newcommand{\nn}{{\mathbb N}}
\newcommand{\zz}{{\mathbb Z}}
\DeclareMathOperator{\Res}{Res}
\DeclareMathOperator{\disc}{disc}
\date{November 24, 2017}
\author{Masanori~Sawa}
\address{
	Kobe University \\
	1-1 Rokkodai, Nada, Kobe, Hyogo 657-8501 \\
	Japan}
\email{sawa@people.kobe-u.ac.jp}
\author{Yukihiro~Uchida}
\address{Department of Mathematics and Information Sciences, Tokyo Metropolitan University,
         1-1 Minami-Osawa, Hachioji, Tokyo 192-0397 \\
         Japan}
\email{yuchida@tmu.ac.jp}
\thanks{The first author is supported in part by
Grant-in-Aid for Young Scientists (B) 26870259 and
Grant-in-Aid for Scientific Research (B) 15H03636
by the Japan Society for the Promotion of Science (JSPS).
The second author is also supported by
Grant-in-Aid for Young Scientists (B) 25800023 by JSPS}
\keywords{Classical quasi-orthogonal polynomial, compact formula, discriminant,
Gaussian design, Hausdorff-type Diophantine equation, quadrature formula, resultant}
\subjclass[2010]{Primary 33C45, 12E10, 65D32, Secondary 05E99, 11E76}
\begin{document}

\title[
Discriminants of quasi-orthogonal polynomials, with applications]
{Discriminants of classical quasi-orthogonal polynomials, with combinatorial and number-theoretic applications}

\begin{abstract}
We derive explicit formulas for the resultants and discriminants of classical quasi-orthogonal polynomials,
as a full generalization of the results of Dilcher and Stolarsky (2005) and Gishe and Ismail (2008).
We consider a certain system of Diophantine equations,
originally designed by Hausdorff (1909) as a simplification of Hilbert's solution (1909)
of Waring's problem,
and then create the relationship to quadrature formulas and quasi-Hermite polynomials.
We reduce these equations to the existence problem of rational points
on a hyperelliptic curve associated with discriminants of
quasi-Hermite polynomials,
and thereby show a nonexistence theorem for solutions of Hausdorff-type equations.
\end{abstract}

\maketitle

\section{Introduction} \label{sec:Introduction}

The Jacobi, Laguerre and Hermite
polynomials are the classical orthogonal polynomials,
which, as we see in Szeg\H{o}'s book {\it Orthogonal Polynomials}, provide
a great deal of interesting topics in broad areas of mathematics.
We are here particularly concerned with a compact elegant formula for
the resultant and discriminant.

The resultant of two polynomials is a rather complicated function of their coefficients,
as pointed out by Dilcher and Stolarsky~\cite{DS05}.
Apostol~\cite{Ap75} found a general form of the resultant of two cyclotomic polynomials.
A quasi-orthogonal polynomial (of order one) is a polynomial of a sum of two orthogonal polynomials of consecutive degrees~\cite{X94}; Shohat and Tamarkin~\cite{ST70} uses the same term with a different naming.
Dilcher and Stolarsky~\cite{DS05} established a compact formula for
the resultant of two quasi-Chebyshev polynomials of the second kind.
Gishe and Ismail~\cite{GI08} (cf.~Gishe~\cite{G06})
obtained similar results concerning
quasi-Chebyshev polynomials of the first kind
and various ^^ Chebyshev-like' polynomials.

The discriminant of a polynomial is a special case of resultants.
Stieltjes~\cite{Sti88-1,Sti88-2} and Hilbert~\cite{Hi1888}
computed the discriminants of Jacobi, Laguerre and Hermite polynomials.
Dilcher and Stolarsky~\cite[Theorem~4]{DS05} derived
a compact formula for the discriminant of a quasi-Chebyshev polynomial of the second kind.

In this paper we derive explicit compact formulas for
the resultants and discriminants of all classical quasi-orthogonal polynomials,
as a full generalization of the results of Dilcher and Stolarsky, and Gishe and Ismail.
Our proof is a combination of algebraic properties on resultants and Schur's method based on the three-term relations of polynomials~\cite{Sch31} (see also~\cite[\S~6.71]{Sze}), which Dilcher and Stolarsky~\cite{DS05} and Gishe and Ismail~\cite{GI08} used to obtain their formulas; in addition, we employ various properties on classical orthogonal polynomials to derive explicit formulas for the discriminants.
We find a surprising connection with Hausdorff's work on Waring's problem. For this purpose of exploring this connection, we also consider a certain system of Diophantine equations,
originally designed by Hausdorff~\cite{Hau09}
(cf.~Nestarenko~\cite{N06})
as a simplification of Hilbert's solution~\cite{Hi09} of Waring's problem,
and elucidate the advantages of examining these equations through the discriminants of quasi-Hermite polynomials.

The paper is organized as follows.
Section~\ref{sec:Pre} gives preliminaries, where we review
some basic results on resultants and discriminants, quasi-orthogonal polynomials and quadrature formulas.
Sections~\ref{sec:resultant} through~\ref{sec:applications} are the main body of this paper.
In Section~\ref{sec:resultant}, we first establish a general formula for
the resultant of type $\Res(\Phi_n(x)+s\Phi_{n-1}(x),\Phi_{n-1}(x)+t\Phi_{n-2}(x))$,
where $\{\Phi_m\}$ is any sequence of orthogonal polynomials and the constants $s,t$ are arbitrarily chosen.
As a specialization of this result,
we obtain a compact formula for the resultants of all classical quasi-orthogonal polynomials.
Section~\ref{sec:explicit} is devoted to the discriminant of classical quasi-orthogonal polynomials. By employing the results in Section~\ref{sec:resultant}, we prove compact formulas for the discriminants of
quasi-Jacobi, quasi-Laguerre and quasi-Hermite polynomials.
In Section~\ref{sec:applications}, we give a generalization of Hausdorff's equations
and then show the relationship to quasi-Hermite polynomials and quadrature formulas for Gaussian integration.
We also show a nonexistence theorem for
solutions of such Hausdorff-type equations.
To do this, we reduce the problem to the existence of $\qq_2$-rational points
on a hyperelliptic curve associated with the discriminants of quasi-Hermite polynomials.
Section~\ref{sec:conclusion} is the conclusion, where further remarks will be made.

\section{Preliminary} \label{sec:Pre}

In this section we review some basic results on resultants and discriminants,
quasi-orthogonal polynomials and quadrature formulas.
We prove lemmas for further arguments in Sections~\ref{sec:resultant} through~\ref{sec:applications}.

\subsection{Discriminants, resultants and Schur's method} \label{subsec:Discriminat}

We first review resultants and discriminants.
For the details, we refer the reader to \cite[Chapter 12]{GKZ94}
or \cite[Chapter IV, \S~8]{L02}.

Let
\[
	f(x) = a_0 x^n + \dotsb + a_n, \quad
	g(x) = b_0 x^m + \dotsb + b_m
\]
be polynomials of degree $n$ and $m$ respectively.
The \emph{resultant} of $f$ and $g$ is defined by
\begin{equation}\label{eq:def-res}
	\Res(f, g) = \begin{vmatrix}
		a_0 & a_ 1 & \dots & a_n \\
		& \ddots & & & \ddots \\
		& & a_0 & a_ 1 & \dots & a_n \\
		b_0 & b_ 1 & \dots & b_m \\
		& \ddots & & & \ddots \\
		& & b_0 & b_ 1 & \dots & b_m
	\end{vmatrix},
\end{equation}
where the determinant is of order $(m+n)$.
Let $\alpha_1, \dotsc, \alpha_n$ be the zeros of $f(x)$ and
$\beta_1, \dotsc, \beta_m$ be the zeros of $g(x)$.
Then we have
\begin{equation}\label{eq:resultant}
	\Res(f, g) = a_0^m b_0^n \prod_{i=1}^n \prod_{j=1}^m
	(\alpha_i - \beta_j)
	= a_0^m \prod_{i=1}^n g(\alpha_i)
	= (-1)^{mn} b_0^n \prod_{j=1}^m f(\beta_j).
\end{equation}
The following properties immediately follow from \eqref{eq:resultant}.
\begin{align}
	\Res(f, g) &= (-1)^{mn} \Res(g, f), \label{eq:res-gf} \\
	\Res(f, g h) &= \Res(f, g) \Res(f, h), \label{eq:res-mult}
\end{align}
where $h$ is an arbitrary polynomial.

The following lemma, which was used by Dilcher and Stolarsky~\cite{DS05}, is useful to derive compact formulas for the resultants of quasi-orthogonal polynomials.
\begin{lemma}\label{lemma:Euclidean}
Let $f$ and $g$ be polynomials as above.
Let $q$ and $r$ be polynomials satisfying
\[
	f(x) = q(x) g(x) + r(x).
\]
Then we have
\begin{equation}\label{eq:Euclidean}
	\Res(g, f) = b_0^{\deg f-\deg r}\Res(g, r).
\end{equation}
\end{lemma}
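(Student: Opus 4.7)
The plan is to derive the identity directly from the product formula~\eqref{eq:resultant} combined with the division relation $f = qg + r$. Denote the zeros of $g(x)$ in an algebraic closure (counted with multiplicity) by $\beta_1, \dots, \beta_m$. First I would apply~\eqref{eq:resultant}, with $g$ playing the role of the first polynomial and $f$ that of the second, to write
\[
\Res(g, f) = b_0^{\deg f} \prod_{j=1}^{m} f(\beta_j).
\]
Since $g(\beta_j) = 0$ for every $j$, the division relation gives $f(\beta_j) = q(\beta_j) g(\beta_j) + r(\beta_j) = r(\beta_j)$, so the product expansion collapses to $\Res(g, f) = b_0^{\deg f} \prod_{j=1}^{m} r(\beta_j)$.

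To finish, I would apply the same product formula to the resultant on the right-hand side, obtaining $\Res(g, r) = b_0^{\deg r} \prod_{j=1}^{m} r(\beta_j)$. Comparing the two expressions yields
\[
\Res(g, f) = b_0^{\deg f - \deg r} \Res(g, r),
\]
which is exactly the claim.

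There is essentially no substantive obstacle here; the argument is one short computation once the correct specialization of~\eqref{eq:resultant} is chosen. The only care needed is in the boundary cases. If $r \equiv 0$, then $g \mid f$, so $f$ and $g$ share every root of $g$ and both sides vanish (with the exponent $\deg f - \deg r$ interpreted as irrelevant). In the Euclidean setting one has $\deg r < \deg g \le \deg f$, so $\deg f - \deg r \ge 0$; and if $\deg f < \deg g$, then $q = 0$ and $r = f$, and the identity reduces to the tautology $\Res(g, f) = b_0^{0} \Res(g, f)$.
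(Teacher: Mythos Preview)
Your argument is correct and is precisely the approach the paper has in mind: the paper's own proof simply states that the lemma follows from~\eqref{eq:resultant}, which is exactly the product formula you specialize and compare on both sides. Your added remarks on the degenerate cases ($r\equiv 0$, $\deg f<\deg g$) are fine and not needed for the uses later in the paper.
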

\begin{proof}[Proof of Lemma~\ref{lemma:Euclidean}]
The lemma follows from \eqref{eq:resultant}.
See also \cite[Lemma~4.1]{DS05}.
\end{proof}

The \emph{discriminant} of $f$ is defined by
\begin{equation}\label{eq:def-disc}
	\disc(f) = a_0^{2n-2} \prod_{1\le i<j\le n} (\alpha_i - \alpha_j)^2.
\end{equation}
The discriminant of $f$ is represented in terms of a resultant as follows.
\begin{equation}\label{eq:DR}
	\disc(f) = \frac{(-1)^{n(n-1)/2}}{a_0} \Res(f, f').
\end{equation}

\begin{remark}
The sign in the right-hand side differs in some literature.
For example, the sign $(-1)^{n(n-1)/2}$ does not appear in~\cite{GKZ94}
because the definition of discriminants differs by sign.
\end{remark}

The following proposition follows
from \eqref{eq:def-res} and \eqref{eq:DR}.
\begin{proposition}\label{prop:homogeneity}
Let $f(x)=a_0x^n+\dotsb+a_n$. Then $\disc(f)$ is a homogeneous polynomial in $a_0,\dotsc,a_n$ of degree $2n-2$ with integer coefficients.
\end{proposition}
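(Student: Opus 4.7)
The plan is to reduce everything to identity \eqref{eq:DR}, which reads $\disc(f) = (-1)^{n(n-1)/2} \Res(f, f')/a_0$. It therefore suffices to show (i) $\Res(f,f')$ is a homogeneous polynomial of degree $2n-1$ in $a_0,\dotsc,a_n$ with integer coefficients, and (ii) $a_0$ divides $\Res(f,f')$ in $\zz[a_0,\dotsc,a_n]$; dividing by $a_0$ then yields a homogeneous element of $\zz[a_0,\dotsc,a_n]$ of degree $2n-2$, as required.

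For (i), I would inspect the Sylvester matrix \eqref{eq:def-res} associated with $\Res(f,f')$. It is a $(2n-1)\times(2n-1)$ matrix whose $n-1$ upper rows are shifted copies of $(a_0,a_1,\dotsc,a_n)$ and whose $n$ lower rows are shifted copies of the coefficient vector $(na_0,(n-1)a_1,\dotsc,a_{n-1})$ of $f'$. Every nonzero entry is thus a $\zz$-linear form in the $a_i$, so in the Leibniz expansion each nonvanishing term picks one entry per row and is an integer multiple of a degree-$(2n-1)$ monomial in $a_0,\dotsc,a_n$, proving (i).

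The only real content is (ii), and this is where I expect the main obstacle. I would establish it by specializing $a_0=0$: in the first column of the Sylvester matrix the only potentially nonzero entries are the $a_0$ at position $(1,1)$ coming from the $f$-block and the $na_0$ at position $(n,1)$ coming from the first row of the $f'$-block; both vanish when $a_0=0$. Hence the first column is identically zero modulo $a_0$, so $\Res(f,f')\in a_0\cdot\zz[a_0,\dotsc,a_n]$, completing the proof. An alternative route through the product formula \eqref{eq:resultant} is available via $f'(\alpha_i)=a_0\prod_{j\neq i}(\alpha_i-\alpha_j)$ together with some symmetric-function theory, but the column-vanishing trick is self-contained and avoids that detour.
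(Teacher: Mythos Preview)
Your argument is correct and follows precisely the route the paper indicates: the paper's entire proof is the sentence ``follows from \eqref{eq:def-res} and \eqref{eq:DR},'' and your proposal simply unpacks those two ingredients---homogeneity and integrality of the Sylvester determinant, then divisibility by $a_0$ via the vanishing first column. There is nothing to add; your write-up is a faithful elaboration of the paper's sketch.
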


By Proposition~\ref{prop:homogeneity}, we may substitute a polynomial of degree less than $n$ for $f$ in $\disc(f)$.
If necessary, we use the notation $\disc_n(f)$ to emphasize the dependence on $n$.

\begin{proposition}\label{prop:degree}
Let $f(x)=a_0x^n+\dotsb+a_n$. Then we have
\[
	\disc_{n+1}(f) = a_0^2 \disc_n(f).
\]
\end{proposition}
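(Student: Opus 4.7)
The plan is to read the identity as a polynomial equality in $a_0,\dotsc,a_n$ and verify it by specializing the leading term of a generic degree-$(n+1)$ polynomial to zero. By Proposition~\ref{prop:homogeneity}, $\disc_{n+1}$ is a well-defined element of $\zz[c_0,c_1,\dotsc,c_{n+1}]$, and $\disc_{n+1}(f)$ is, by construction, its value at $(c_0,c_1,\dotsc,c_{n+1})=(0,a_0,\dotsc,a_n)$. Both sides of the claim are then polynomials in $a_0,\dotsc,a_n$ of the same degree $2n$, so it suffices to check equality on the Zariski-dense locus where $a_0\neq 0$ and $f$ has $n$ distinct zeros.

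On that locus I would introduce the deformation $P_{c_0}(x):=c_0 x^{n+1}+f(x)$ and study $\disc_{n+1}(P_{c_0})$ as $c_0\to 0$. For small nonzero $c_0$, $P_{c_0}$ has $n+1$ simple zeros; by continuity of the roots of a polynomial in its coefficients, $n$ of them --- say $\alpha_1^{(c_0)},\dotsc,\alpha_n^{(c_0)}$ --- converge to the zeros $\alpha_1,\dotsc,\alpha_n$ of $f$, while the remaining zero $\beta_{c_0}$ escapes to infinity. The Vieta relation $\sum_i\alpha_i^{(c_0)}+\beta_{c_0}=-a_0/c_0$ together with the boundedness of the first sum then forces $c_0\beta_{c_0}\to -a_0$ (alternatively, one substitutes $\beta_{c_0}$ into $P_{c_0}=0$, divides by $\beta_{c_0}^n$, and lets $c_0\to 0$).

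Plugging these asymptotics into the product formula \eqref{eq:def-disc} gives
\[
\disc_{n+1}(P_{c_0})
= c_0^{2n}\prod_{1\le i<j\le n}\bigl(\alpha_i^{(c_0)}-\alpha_j^{(c_0)}\bigr)^{2}
\cdot\prod_{i=1}^{n}\bigl(\alpha_i^{(c_0)}-\beta_{c_0}\bigr)^{2}.
\]
As $c_0\to 0$, the first product tends to $\disc_n(f)/a_0^{2n-2}$ by \eqref{eq:def-disc} applied to $f$; rewriting the second product as $\beta_{c_0}^{2n}\prod_{i=1}^{n}(1-\alpha_i^{(c_0)}/\beta_{c_0})^{2}$ and absorbing the external $c_0^{2n}$ into $(c_0\beta_{c_0})^{2n}$ yields a limit of $a_0^{2n}\cdot 1$. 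Multiplying these two limits produces $a_0^2\disc_n(f)$.

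Finally, because $\disc_{n+1}(P_{c_0})$ is, by Proposition~\ref{prop:homogeneity}, a polynomial in $c_0$, its value at $c_0=0$ coincides with the limit just computed; by construction that value is $\disc_{n+1}(f)$, which yields the claim. The only delicate point is controlling the escaping root via the Vieta/asymptotic estimate $c_0\beta_{c_0}\to -a_0$; the remainder is a formal manipulation of \eqref{eq:def-disc}.
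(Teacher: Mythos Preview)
Your argument is correct. The paper itself does not give a proof of this proposition; it simply cites \cite[Chapter~12, (1.41)]{GKZ94} and notes the sign convention. So there is no ``paper's approach'' to compare against in detail.

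Your route---deforming $f$ to $P_{c_0}(x)=c_0x^{n+1}+f(x)$, tracking the $n$ bounded roots and the single escaping root, and using $c_0\beta_{c_0}\to -a_0$ to evaluate the limit of the product formula \eqref{eq:def-disc}---is a clean self-contained analytic proof. The two reductions you invoke are both sound: (i) equality of two polynomials in $a_0,\dotsc,a_n$ follows from equality on the Zariski-dense locus where $a_0\neq 0$ and $f$ has simple zeros; (ii) since $\disc_{n+1}(P_{c_0})$ is a polynomial in $c_0$ by Proposition~\ref{prop:homogeneity}, its limit as $c_0\to 0$ equals its value there, which is $\disc_{n+1}(f)$ by definition. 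The only place one might want an extra sentence is the assertion that exactly $n$ roots of $P_{c_0}$ stay bounded and converge to the roots of $f$; this is standard (e.g.\ Rouch\'e on small disks around each $\alpha_i$, or the substitution $x=1/y$), and once it is in place your Vieta estimate for $\beta_{c_0}$ follows immediately. The computation of the limit is then exactly as you wrote.

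For contrast, the reference in \cite{GKZ94} treats such degeneration identities algebraically (via the Sylvester determinant / $A$-discriminant formalism), which avoids any limiting argument but is less elementary. Your approach trades that machinery for a transparent root-tracking picture; either is perfectly adequate for how the proposition is used later in the paper.
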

\begin{proof}[Proof of Proposition~\ref{prop:degree}]
See~\cite[Chapter~12, (1.41)]{GKZ94}.
Note that the definition of discriminants in~\cite{GKZ94} differs by sign from ours.
\end{proof}

\begin{proposition}
\label{prop:DS4.3}
Let $f(x)=a_0x^n+\dotsb+a_n$ and let $a,b,c$ be constants.
Then we have
\begin{align*}
	\disc(f(ax+b)) &= a^{n(n-1)} \disc(f(x)), \\
	\disc(cf(x)) &= c^{2(n-1)} \disc(f(x)).
\end{align*}
\end{proposition}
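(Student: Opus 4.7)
The plan is to use the product formula \eqref{eq:def-disc} directly, since all the information needed (leading coefficient and roots) transforms transparently under the operations $f(x) \mapsto f(ax+b)$ and $f(x) \mapsto cf(x)$. Nothing about resultants or the derivative formula \eqref{eq:DR} is needed.

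For the first identity, let $\alpha_1,\dotsc,\alpha_n$ be the roots of $f(x)$. I would observe that $f(ax+b)$ has degree $n$, leading coefficient $a_0 a^n$, and roots $\beta_i=(\alpha_i-b)/a$. Plugging into \eqref{eq:def-disc} gives
\[
\disc(f(ax+b)) = (a_0 a^n)^{2n-2}\prod_{1\le i<j\le n}\left(\frac{\alpha_i-\alpha_j}{a}\right)^2 = a_0^{2n-2} a^{n(2n-2)-2\binom{n}{2}}\prod_{i<j}(\alpha_i-\alpha_j)^2.
\]
The exponent simplifies as $n(2n-2)-n(n-1)=n(n-1)$, which yields the claimed identity. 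For the second identity, $cf(x)$ has leading coefficient $ca_0$ and the same roots as $f$, so \eqref{eq:def-disc} immediately gives $\disc(cf)=(ca_0)^{2n-2}\prod_{i<j}(\alpha_i-\alpha_j)^2 = c^{2(n-1)}\disc(f)$.

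There is essentially no obstacle; the only point requiring mild care is the bookkeeping for the exponent of $a$ in the first identity, which uses $\binom{n}{2}=n(n-1)/2$. One small caveat is that the argument as written assumes $a\ne 0$ (so that $f(ax+b)$ still has degree $n$ and the roots $\beta_i$ are defined); the case $a=0$ reduces $f(ax+b)$ to the constant $f(b)$, and both sides of the identity vanish for $n\ge 2$, while for $n=1$ the exponent $n(n-1)=0$ makes the identity trivially hold after using $\disc$ of a linear polynomial equals $1$ (or via Proposition~\ref{prop:homogeneity} and the polynomial identity in the coefficients).
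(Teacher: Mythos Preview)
Your argument is correct and is exactly the approach the paper indicates: the paper's proof simply says the proposition follows from~\eqref{eq:def-disc} (with a reference to Dilcher--Stolarsky), and you have carried out precisely that computation, including the exponent bookkeeping $n(2n-2)-2\binom{n}{2}=n(n-1)$. Your treatment of the degenerate case $a=0$ is a small bonus the paper does not spell out.
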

\begin{proof}[Proof of Proposition~\ref{prop:DS4.3}]
The proposition follows from~\eqref{eq:def-disc}.
See also~\cite[Lemma~4.3]{DS05}.
\end{proof}

\begin{proposition}\label{prop:DLC}
Let $p(x)$ and $q(x)$ be polynomials of degree $n$ and $n-1$ respectively.
Let $c$ be a constant.
\begin{enumerate}
\item[{\rm (i)}] \label{item:DLC-poly} 
The discriminant $\disc(p+cq)$ is a polynomial in $c$ and
\[
	\deg \disc(p+cq) \le 2 (n - 1).
\]
The equality holds if and only if $q$ has no multiple zeros.
\item[{\rm (ii)}] \label{item:DLC-even} 
If $p(-x)=(-1)^n p(x)$ and $q(-x)=(-1)^{n-1}q(x)$,
then $\disc(p+cq)$ is an even polynomial in $c$.
\end{enumerate}
\end{proposition}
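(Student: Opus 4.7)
My plan for part (i) is to exploit the homogeneity recorded in Proposition~\ref{prop:homogeneity}. Write $p(x) = a_0 x^n + a_1 x^{n-1} + \cdots + a_n$ with $a_0 \neq 0$ and $q(x) = b_1 x^{n-1} + b_2 x^{n-2} + \cdots + b_n$ with $b_1 \neq 0$; then $p+cq$ has coefficient vector $(a_0, a_1 + c b_1, \ldots, a_n + c b_n)$, each entry affine linear in $c$. Since $\disc_n$ is a homogeneous polynomial of degree $2n-2$ in these coefficients, the substitution produces a polynomial in $c$ of degree at most $2n-2$. To extract the leading coefficient, observe that in any monomial $a_0^{k_0} a_1^{k_1} \cdots a_n^{k_n}$ of $\disc_n$ (with $k_0 + \cdots + k_n = 2n-2$), the total $c$-degree after substitution is at most $k_1 + \cdots + k_n$, with equality forcing $k_0 = 0$. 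Hence the coefficient of $c^{2n-2}$ equals $\disc_n$ evaluated at $(0, b_1, \ldots, b_n)$, i.e.\ at the coefficients of $q$ viewed as a degree-$n$ polynomial with vanishing leading term. By Proposition~\ref{prop:degree} applied to $q$, this equals $b_1^2 \, \disc_{n-1}(q)$, which is nonzero precisely when $q$ has no multiple zeros.

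For part (ii), the plan is to show that $\disc(p+cq) = \disc(p-cq)$ as polynomials in $c$, from which evenness in $c$ is immediate. Setting $f(x) = p(x) + c q(x)$, the parity hypotheses yield
\[
f(-x) = (-1)^n p(x) + (-1)^{n-1} c q(x) = (-1)^n \bigl( p(x) - c q(x) \bigr).
\]
Applying the first formula of Proposition~\ref{prop:DS4.3} with $a = -1$, $b = 0$ gives $\disc(f(-x)) = (-1)^{n(n-1)} \disc(f(x)) = \disc(f(x))$, since $n(n-1)$ is even. Applying the second formula with constant $(-1)^n$ gives $\disc((-1)^n(p-cq)) = ((-1)^n)^{2(n-1)} \disc(p-cq) = \disc(p-cq)$. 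Chaining the two equalities, $\disc(p+cq) = \disc(f(-x)) = \disc(p-cq)$, which is exactly the claim that the polynomial in $c$ is invariant under $c \mapsto -c$.

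The argument is largely routine bookkeeping on top of the earlier propositions, and I expect no genuine obstacle. The only subtlety is in part (i), where one must be careful to translate $\disc_n(0, b_1, \ldots, b_n)$ into $b_1^2 \disc_{n-1}(q)$ via Proposition~\ref{prop:degree}: the nonvanishing criterion refers to $\disc_{n-1}(q)$ (i.e.\ $q$ as a genuine degree-$(n-1)$ polynomial), not to $\disc_n(q)$, which conflates zero leading coefficient with the multiple-root condition.
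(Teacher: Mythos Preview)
Your proof is correct and follows essentially the same route as the paper. Part~(ii) is identical to the paper's argument. In part~(i) the paper obtains the leading $c$-coefficient by the scaling identity $\disc(p+cq)/c^{2(n-1)} = \disc_n(c^{-1}p+q)$ and letting $c\to\infty$, whereas you extract it by a direct monomial count; both reduce to $\disc_n(q)=b_1^2\disc_{n-1}(q)$ via Proposition~\ref{prop:degree}, so the difference is only bookkeeping.
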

\begin{proof}[Proof of Proposition~\ref{prop:DLC}]
(i) By Proposition~\ref{prop:homogeneity},
$\disc(p+cq)$ is a polynomial in $c$.
By Proposition~\ref{prop:DS4.3}, we have
\[
	\frac{\disc(p+cq)}{c^{2(n-1)}} = \disc_n
	\left(\frac{1}{c}p+q\right).
\]
By Proposition~\ref{prop:degree}, we have
\[
	\lim_{c\to\infty} \frac{\disc(p+cq)}{c^{2(n-1)}}
	= \disc_n(q)
	= l^2 \disc_{n-1}(q),
\]
where $l$ is the leading coefficient of $q$.
This completes the proof.

\noindent
(ii)
By assumption,
\[
	p(-x) + c q(-x)
	= (-1)^n (p(x) - c q(x)).
\]
Therefore, by Proposition~\ref{prop:DS4.3},
\begin{align*}
	\disc(p - c q) &= \disc((-1)^n (p(-x) + c q(-x))) \\
	&= (-1)^{n\cdot 2(n-1)}(-1)^{n(n-1)}\disc(p(x) + c q(x)) \\
	&= \disc(p + c q). \qedhere
\end{align*}
\end{proof}

The following lemma, due to Schur~\cite{Sch31} (see~\cite[\S~6.71]{Sze}),
plays a role in the proof of the main theorems of
Sections~\ref{sec:resultant} and~\ref{sec:explicit}.

\begin{lemma}
[Schur's method]
\label{lemma:delta}
Let $\{\Phi_m\}$ be a sequence of polynomials satisfying
\begin{equation}\label{eq:rho}
\begin{split}
	\Phi_m(x) &= (a_m x + b_m) \Phi_{m-1}(x) - c_m \Phi_{m-2}(x), \\
	\Phi_0(x) &= 1, \quad \Phi_1(x) = a_1 x + b_1,
\end{split}
\end{equation}
where $a_m$, $b_m$, $c_m$ are constants with $a_m c_m \neq 0$.
Let $y_1, \dotsc, y_n$ be the zeros of $\Phi_n(x)$.
Then we have
\[
   \prod_{k=1}^n \Phi_{n-1}(y_k) = (-1)^{n(n-1)/2} \prod_{k=1}^n a_k^{n-2k+1} c_k^{k-1}.
\]
\end{lemma}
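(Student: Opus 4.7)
The plan is to relate the product $\prod_{k=1}^n \Phi_{n-1}(y_k)$ to the resultant $R_n := \Res(\Phi_n,\Phi_{n-1})$, and then compute $R_n$ by a recursion obtained from the three-term recurrence.

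First, writing $A_m := \prod_{k=1}^m a_k$ for the leading coefficient of $\Phi_m$, the product formula \eqref{eq:resultant} gives
\[
R_n = \Res(\Phi_n,\Phi_{n-1}) = A_n^{n-1}\prod_{k=1}^n \Phi_{n-1}(y_k),
\]
so it suffices to prove the closed form
\[
R_n = (-1)^{n(n-1)/2}\,A_n^{n-1}\prod_{k=1}^n a_k^{n-2k+1}c_k^{k-1}
= (-1)^{n(n-1)/2}\prod_{j=1}^{n-1}a_j^{2(n-j)}\prod_{k=2}^n c_k^{k-1}.
\]

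Second, I would feed the recurrence \eqref{eq:rho}, in the form $\Phi_n(x)=(a_nx+b_n)\Phi_{n-1}(x)+(-c_n\Phi_{n-2}(x))$, into Lemma~\ref{lemma:Euclidean}. The lemma (with $f=\Phi_n$, $g=\Phi_{n-1}$, $q=a_nx+b_n$, $r=-c_n\Phi_{n-2}$) yields
\[
\Res(\Phi_{n-1},\Phi_n) = A_{n-1}^{\,n-(n-2)}\,\Res(\Phi_{n-1},-c_n\Phi_{n-2})
= A_{n-1}^{2}(-c_n)^{n-1}\Res(\Phi_{n-1},\Phi_{n-2}),
\]
where I used the scalar-homogeneity $\Res(f,cg)=c^{\deg f}\Res(f,g)$, which is immediate from \eqref{eq:resultant}. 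Combining with \eqref{eq:res-gf} and the parity of $n(n-1)$ and $(n-1)(n-2)$, this gives the clean recursion
\[
R_n = A_{n-1}^{2}(-c_n)^{n-1}\,R_{n-1},\qquad R_1=\Res(a_1x+b_1,1)=1.
\]

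Third, I would iterate this recursion:
\[
R_n = \prod_{k=2}^{n} A_{k-1}^{2}(-c_k)^{k-1}
= (-1)^{\sum_{k=2}^{n}(k-1)}\prod_{k=2}^{n}A_{k-1}^{2}\prod_{k=2}^{n}c_k^{k-1}.
\]
The sign is $(-1)^{n(n-1)/2}$, and counting how often each $a_j$ appears in $\prod_{k=2}^n A_{k-1}^2$ gives exponent $2(n-j)$ for $1\le j\le n-1$. Finally, dividing by $A_n^{n-1}=\prod_{j=1}^n a_j^{n-1}$ converts the exponent $2(n-j)$ into $n-2j+1$ (and introduces $a_n^{-(n-1)}=a_n^{n-2n+1}$ for the $j=n$ factor), producing precisely the claimed formula, with $c_1^{0}=1$ tacked on harmlessly.

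I expect no serious obstacle; the only delicate point is the index/exponent bookkeeping when passing from $2(n-j)$ to $n-2j+1$ and verifying the sign $(-1)^{n(n-1)/2}$. The algebraic engine — Lemma~\ref{lemma:Euclidean} combined with scalar homogeneity of the resultant — does all the real work.
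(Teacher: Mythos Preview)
Your proof is correct. The paper does not actually prove this lemma; it is stated as a known result attributed to Schur~\cite{Sch31} (see also~\cite[\S~6.71]{Sze}) and used as a black box. Your argument is the natural resultant-theoretic formulation of Schur's original recursion, and in fact the paper carries out essentially the same computation in the proof of \eqref{eq:res-general-t0}: there it writes $\Res(\Phi_n,\Phi_{n-1})=l_n^{n-1}\prod_i\Phi_{n-1}(y_i)$ and then invokes Lemma~\ref{lemma:delta} to evaluate the product, whereas you go one step further and derive that product by iterating Lemma~\ref{lemma:Euclidean}. So your approach and the paper's are the same in spirit; you have simply supplied the induction that the paper outsources to the reference.
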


\subsection{Quasi-orthogonal polynomials and Riesz-Shohat Theorem} \label{subsec:Quasi}

Let $\mu$ be a positive Borel measure on an interval $(a,b)$ with finite moments.
For convenience, we assume that $\int_a^b d\mu=1$.
Let $\{ \Phi_n \}$ be a sequence of \emph{orthogonal polynomials} with respect to $\mu$, namely
\[
	\int_a^b \Phi_m(x) \Phi_n(x) d\mu = 0 \quad \text{for every distinct $m$ and $n$}.
\]

By Bochner's theorem,
the classical orthogonal polynomials are completely classified by
the \emph{Jacobi polynomials}, \emph{Laguerre polynomials}, \emph{Hermite polynomials}.

\bigskip

\noindent
\textbf{Jacobi polynomial.} 
For $\alpha,\beta>-1$, 
the $n$-th Jacobi polynomial $P^{(\alpha,\beta)}_n(x)$ is defined
by the Rodrigues' formula as follows:
\begin{equation}\label{eq:Rod-J}
	P^{(\alpha,\beta)}_n(x) = \frac{(-1)^n}{2^n n!}
	(1 - x)^{-\alpha} (1 + x)^{-\beta}
	\frac{d^n}{dx^n} \left( (1 - x)^{n+\alpha} (1 + x)^{n + \beta} \right).
\end{equation}
The polynomials $P^{(\alpha,\beta)}_n(x)$ are orthogonal with respect to $(1-x)^\alpha (1+x)^\beta$ on $(-1,1)$.

\bigskip

\noindent
\textbf{Laguerre polynomial.} 
For $\alpha>-1$,
the $n$-th Laguerre polynomial $L_n^{(\alpha)}(x)$ is defined
by the Rodrigues' formula as follows:
\begin{equation}\label{eq:Rod-L}
	L_n^{(\alpha)}(x) =
	\frac{e^x x^{-\alpha}}{n!} \frac{d^n}{dx^n} (e^{-x} x^{n+\alpha}).
\end{equation}
The polynomials $L_n^{(\alpha)}(x)$ are orthogonal with respect to $e^{-x}x^{\alpha}$ on $(0,\infty)$.

\bigskip

\noindent
\textbf{Hermite polynomial.} 
The $n$-th Hermite polynomial $H_n(x)$ is defined
by the Rodrigues' formula as follows:
\begin{equation}\label{eq:Rod-H}
	H_n(x) = (-1)^n e^{x^2} \frac{d^n}{dx^n} e^{-x^2}.
\end{equation}
The polynomials $H_n(x)$ are orthogonal with respect to $e^{-x^2}$ on $\rr$.

\bigskip

Some of the basic properties on classical orthogonal polynomials,
used in
Sections~\ref{sec:resultant} through~\ref{sec:applications},
are summarized in Appendix~\ref{appendix:COP}.
For the general theory, we refer the readers to Szeg\H{o}'s book
{\it Orthogonal Polynomials}~\cite[Chapter IV and \S~5.1 and \S~5.5]{Sze}.

A {\it quasi-orthogonal polynomial of degree $n$ and order $r$} is
a polynomial of type
\[
	\Phi_{n,r}(x) = \Phi_n(x) + b_1 \Phi_{n-1}(x) + \dotsb + b_r \Phi_{n-r}(x)
\]
in which $b_1,\ldots,b_r \in \rr$ and $b_r \ne 0$~\cite{X94}.
For convenience, we set $\Phi_{n,0}(x) = \Phi_n(x)$.
The polynomial $\Phi_{n,r}(x)$ is orthogonal to all polynomials of degree at most $n-r-1$.

\begin{remark}
\label{rem:QOP}
Shohat and Tamarkin~\cite{ST70} used the term
^^ order' of $\Phi_{n,r}(x)$, with a different meaning.
\end{remark}

The following is also well known (cf.~\cite[Theorem~3.3.4]{Sze}):

\begin{proposition}\label{prop:zero3}
Let $b_1$ be a real constant.
Then the polynomial $\Phi_{n+1,1}(x) = \Phi_{n+1}(x) + b_1 \Phi_n(x)$
has $n+1$ distinct real roots.
\end{proposition}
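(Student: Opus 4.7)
The plan is to combine the classical interlacing property of zeros of consecutive orthogonal polynomials with the intermediate value theorem and a short sign check at $\pm\infty$. First I would dispose of the trivial case $b_1=0$: then $\Phi_{n+1,1}=\Phi_{n+1}$ is itself orthogonal with respect to $\mu$, so by the standard theory (which the paper cites just before the statement) all $n+1$ of its zeros are real and simple.

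Next, assume $b_1\ne 0$. Let $x_1<\cdots<x_{n+1}$ denote the zeros of $\Phi_{n+1}$ and $y_1<\cdots<y_n$ the zeros of $\Phi_n$. By the interlacing theorem,
\[
	x_1<y_1<x_2<y_2<\cdots<y_n<x_{n+1},
\]
so $\Phi_n(x_i)\ne 0$ for every $i$ and $\Phi_n$ changes sign exactly once on each open interval $(x_i,x_{i+1})$. Hence the values $\Phi_n(x_1),\ldots,\Phi_n(x_{n+1})$ alternate in sign, and so do the values
\[
	\Phi_{n+1,1}(x_i)=b_1\Phi_n(x_i), \qquad i=1,\ldots,n+1.
\]
The intermediate value theorem then produces at least one zero of $\Phi_{n+1,1}$ in each of the $n$ open intervals $(x_i,x_{i+1})$, giving $n$ pairwise distinct real zeros, all lying inside $(x_1,x_{n+1})$.

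To obtain the missing $(n+1)$-th zero, I would compare $\Phi_{n+1,1}$ with its leading-term behaviour at $\pm\infty$. Normalising so that $\Phi_{n+1}$ and $\Phi_n$ have positive leading coefficients, one has $\Phi_n(x_{n+1})>0$ and $\operatorname{sgn}\Phi_n(x_1)=(-1)^n$, while $\Phi_{n+1,1}(x)\to+\infty$ as $x\to+\infty$ and $\Phi_{n+1,1}(x)\to(-1)^{n+1}\infty$ as $x\to-\infty$. A short case split then gives the extra zero: if $b_1<0$, the sign at $x_{n+1}$ is negative while the limit at $+\infty$ is positive, so a zero lies in $(x_{n+1},\infty)$; if $b_1>0$, the analogous comparison at $x_1$ versus $-\infty$ produces a zero in $(-\infty,x_1)$. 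In either case this last zero lies outside $(x_1,x_{n+1})$ and is therefore distinct from the $n$ zeros already found, yielding $n+1$ distinct real roots.

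The only step that needs genuine care is the endpoint sign analysis that places the final zero outside the convex hull of the zeros of $\Phi_{n+1}$; the rest is a direct consequence of interlacing and the intermediate value theorem.
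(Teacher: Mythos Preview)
Your argument is correct. The paper does not supply its own proof of this proposition: it states the result as well known and simply refers to \cite[Theorem~3.3.4]{Sze}. Your interlacing-plus-IVT argument, together with the sign check at the boundary to locate the $(n{+}1)$-th zero, is exactly the standard proof one finds behind that citation, so there is nothing to contrast.
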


The following result was first obtained by Riesz~\cite[p.23]{Ri23} for $k=2$,
and generalized by Shohat~~\cite[Theorem~I]{Sho37} for $k \ge 3$.

\begin{theorem}
[Riesz-Shohat Theorem]
\label{thm:Shohat}
Let $c_1,\dotsc,c_n$ be distinct real numbers,
$\omega_n(x)=\prod_{i=1}^n(x-c_i)$ and
\[
	\gamma_i = \int_a^b \frac{\omega_n(x)}{(x-c_i)\omega_n'(c_i)} d\mu.
\]
Let $k$ be an integer with $1\le k\le n+1$.
The following are equivalent.
\begin{enumerate}
\item[{\rm (i)}]
The equation
\begin{equation}
\label{eq:quadrature2}
	\sum_{i=1}^n \gamma_i f(c_i) = \int_a^b f(x) d\mu,
\end{equation}
holds for all polynomials $f(x)$ of degree at most $2n-k$.
\item[{\rm (ii)}]
For all polynomials $g(x)$ of degree at most $n-k$,
\[
	\int_a^b \omega_n(x) g(x) d\mu = 0.
\]
\item[{\rm (iii)}]
The polynomial $\omega_n(x)$ is
a quasi-orthogonal polynomial of degree $n$ and order $k-1$,
that is, there exists real numbers $b_1,\dotsc,b_{k-1}$ such that
\[
	\omega_n(x) = \Phi_n(x) + b_1 \Phi_{n-1}(x) + \dotsb + b_{k-1} \Phi_{n-k+1}(x).
\]
\end{enumerate}
\end{theorem}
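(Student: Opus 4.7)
The plan is to prove the three equivalences cyclically via (i) $\Rightarrow$ (ii) $\Rightarrow$ (iii) $\Rightarrow$ (i), exploiting in each step the interplay between the nodes $c_i$ (encoded in $\omega_n$), orthogonality against the measure $\mu$, and Lagrange interpolation.

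For (i) $\Rightarrow$ (ii), the idea is that for any polynomial $g$ with $\deg g \le n-k$, the product $f(x) = \omega_n(x) g(x)$ has degree at most $n + (n-k) = 2n-k$, so the quadrature in (i) applies. Since $\omega_n(c_i) = 0$ for each $i$, the left-hand side of \eqref{eq:quadrature2} vanishes, leaving $\int_a^b \omega_n(x) g(x)\,d\mu = 0$.

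For (ii) $\Rightarrow$ (iii), I would use that $\{\Phi_0,\Phi_1,\dots,\Phi_n\}$ is a basis for the space of polynomials of degree at most $n$. Expand $\omega_n(x) = \sum_{j=0}^n a_j \Phi_j(x)$ and test against $g = \Phi_j$ for $0\le j\le n-k$: by (ii) and the orthogonality of the $\Phi_j$, one gets $a_j \int \Phi_j^2 d\mu = 0$, so $a_j = 0$ in that range. Hence $\omega_n(x) = a_n\Phi_n(x) + a_{n-1}\Phi_{n-1}(x) + \dotsb + a_{n-k+1}\Phi_{n-k+1}(x)$; after matching leading coefficients (which forces $a_n$ to be the reciprocal of the leading coefficient of $\Phi_n$), we obtain the stated form with $b_i = a_{n-i}/a_n$.

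For (iii) $\Rightarrow$ (i), take any $f$ with $\deg f \le 2n-k$ and perform polynomial division $f = q\omega_n + r$ with $\deg r \le n-1$ and $\deg q \le n-k$. Then $\int_a^b f\,d\mu = \int_a^b q\omega_n\,d\mu + \int_a^b r\,d\mu$, and the first integral vanishes because the quasi-orthogonality assertion in (iii) says precisely (as observed in the text) that $\omega_n$ is $\mu$-orthogonal to every polynomial of degree at most $n-k$. On the quadrature side, $\omega_n(c_i)=0$ yields $\sum_i \gamma_i f(c_i) = \sum_i \gamma_i r(c_i)$. It remains to check $\int_a^b r\,d\mu = \sum_i \gamma_i r(c_i)$ for every $r$ of degree at most $n-1$. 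This follows from Lagrange interpolation at the distinct nodes $c_1,\dots,c_n$: writing $r(x) = \sum_{i=1}^n r(c_i)\,\omega_n(x)/((x-c_i)\omega_n'(c_i))$ and integrating term-by-term against $\mu$ produces exactly $\sum_i \gamma_i r(c_i)$ by the very definition of $\gamma_i$.

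The main obstacle I anticipate is not any single hard step but rather the bookkeeping in (ii) $\Rightarrow$ (iii), namely reconciling the monic normalization of $\omega_n$ with the conventional (non-monic) normalizations of the classical $\Phi_j$; once leading coefficients are matched the statement in (iii) is automatic. Beyond that, the cyclic argument is essentially a packaging of orthogonality plus Lagrange interpolation, and no further machinery from Section~\ref{subsec:Discriminat} is needed.
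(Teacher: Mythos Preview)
Your cyclic argument (i) $\Rightarrow$ (ii) $\Rightarrow$ (iii) $\Rightarrow$ (i) is correct and is the standard proof; the only caveat is the one you already flag, namely that the literal equality $\omega_n=\Phi_n+b_1\Phi_{n-1}+\dotsb$ in (iii) presumes the $\Phi_j$ are taken monic (otherwise one obtains the same expansion up to a nonzero scalar), and the statement does not insist on $b_{k-1}\neq 0$, so from (ii) you really obtain ``order at most $k-1$''---both points are harmless for how the theorem is used later.

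There is nothing to compare against here: the paper does not supply its own proof of Theorem~\ref{thm:Shohat} but quotes it as a classical result, attributing the case $k=2$ to Riesz and the general case to Shohat.
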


Integration formulas of type (\ref{eq:quadrature2}) are called \emph{quadrature formulas}.
Quadrature formulas with positive weights $\gamma_i$ are important as integration
formula, which, by a theorem of Xu~\cite[Theorem 4.1]{X94},
have an elegant characterization in terms of tri-diagonal matrices.
A class of positive quadrature formulas was also implicitly used in Hausdorff's work~\cite{Hi09} on Waring's problem; the details will be clear in the next subsection.

\subsection{Quadrature formulas and Hausdorff's Lemma} \label{subsec:Hi-qf}

A {\it Hilbert identity} is a polynomial identity of the form
\begin{equation}\label{eq:RHI}
(x_1^2 + \cdots + x_n^2)^r
= \sum_{i = 1}^M c_i (a_{i1} x_1 + \cdots + a_{in} x_n)^{2r}
\end{equation}
where $0 < c_i$ and $a_{ij} \in \rr$.
Clearly, it is always possible to absorb the coefficients $c_i$'s into the linear forms.
A {\it rational identity} is an identity of type (\ref{eq:RHI})
in which $0 < c_i \in \qq$ and $a_{ij} \in \qq$.
In this case scaling is no longer simple.

Waring's problem in number theory asks whether
every positive integer can be expressed as a sum of $r$-th powers of integers.
The case $r=2$ had been stated by Fermat in 1640 and was solved by Lagrange in 1770.
The first advance for $r \geq 3$ was made by Liouville in 1859,
who proved that 
every natural integer is a sum of at most $53$ fourth powers of integers.
For this purpose, Liouville used the rational identity
\[
6(x_1^2+x_2^2+x_3^2+x_4^2)^2
= \sum_{1 \leq i < j \leq 4} \left\{ (x_i+x_j)^4 + (x_i-x_j)^4 \right\}.
\]
Mathematicians in the rest of the 19th century gave
similar identities and settled Waring's problem in the small-degree cases.
For a good introduction to the early histories on Waring's problem,
we refer the readers to Dickson's book
{\em History of the Theory of Numbers, II}~\cite[pp.717-725]{D23}.

It was Hilbert~\cite{Hi09} who finally solved
Waring's problem in general; namely, for every positive integer $r$, there
exists some positive integer $g(r)$ so that for each $n \in \nn$ there exist $x_k \in \zz$
so that
\[ n = \sum_{k=1}^{g(r)} x_k^r. \]
We are concerned here only with the first part of Hilbert's proof,
which involved the construction of rational Hilbert identities.

The first key step of Hilbert's proof is Theorem~\ref{thm:Hilbert} below, which
was stated for $n=5$; it is obvious that Hilbert's argument applies to general values of $n$.

\begin{theorem}
[Hilbert's Lemma]
\label{thm:Hilbert}
For every positive integers $n$ and $r$,
\[
  (x_1^2 + \cdots + x_n^2)^r
     = \sum_{i = 1}^M c_i (a_{i1} x_1 + \cdots + a_{in} x_n)^{2r}
\]
in which $M = (2r+n-1) \cdots (2r+1)/(n-1)!$, $0 < c_i \in \qq$ and $a_{ij} \in \qq$.
\end{theorem}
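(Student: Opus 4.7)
The plan is to prove Hilbert's Lemma by passing from a \emph{continuous} spherical integral identity to a \emph{finite rational} representation, using Carath\'eodory's theorem on convex cones together with a density argument. The continuous step represents $(x_1^2+\cdots+x_n^2)^r$ as an integral of $(2r)$-th powers of linear forms over the unit sphere; Carath\'eodory then extracts an $M$-term conic combination, and rationality is forced by the dimension count.

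\emph{Step 1 (spherical integral identity).} For $x \in \rr^n$ set
\[
I(x) = \int_{S^{n-1}} (a_1 x_1 + \cdots + a_n x_n)^{2r} \, d\sigma(a),
\]
where $d\sigma$ is the rotation-invariant normalized measure on the unit sphere. By rotational invariance $I(x)$ depends only on $|x|^2 = x_1^2+\cdots+x_n^2$, and since it is a homogeneous polynomial of degree $2r$ in $x$ it equals $C_{n,r}(x_1^2+\cdots+x_n^2)^r$ for some constant $C_{n,r}>0$, which one computes by setting $x=(1,0,\dotsc,0)$ and reducing to a beta integral. Let $V_\rr$ be the real vector space of homogeneous polynomials of degree $2r$ in $x_1,\dotsc,x_n$, of dimension exactly $M = \binom{2r+n-1}{n-1}$. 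The forms $(a\cdot x)^{2r}$ for $a\in\rr^n$ span $V_\rr$ by polarization, so the convex cone $\mathcal{C}\subset V_\rr$ they generate has nonempty interior, and Step~1 places $(x_1^2+\cdots+x_n^2)^r$ in the \emph{interior} of $\mathcal{C}$.

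\emph{Step 2 (finite representation and rationalization).} Applying Carath\'eodory's theorem for convex cones in the $M$-dimensional space $V_\rr$ to the point $(x_1^2+\cdots+x_n^2)^r \in \mathcal{C}$, one obtains $a^{(1)},\dotsc,a^{(M)}\in\rr^n$ and nonnegative reals $c_1,\dotsc,c_M$ with
\[
(x_1^2+\cdots+x_n^2)^r = \sum_{i=1}^{M} c_i (a^{(i)}\cdot x)^{2r}.
\]
Because the target lies in the interior of $\mathcal{C}$, a small redistribution of mass among generators produces such a representation with all $c_i>0$ and with the $(a^{(i)}\cdot x)^{2r}$ linearly independent, hence a basis of $V_\rr$. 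Now replace each $a^{(i)}$ by a sufficiently close $a'^{(i)}\in\qq^n$: by continuity and openness of both linear independence and positivity, the forms $(a'^{(i)}\cdot x)^{2r}$ still form a basis of $V_\qq$, and the unique expansion of $(x_1^2+\cdots+x_n^2)^r \in V_\qq$ in this basis yields coefficients $c'_i$ which are rational (from the rational linear system) and close to the original $c_i$, hence strictly positive.

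\emph{Main obstacle.} The principal difficulty lies in Step~2: simultaneously securing exactly $M$ \emph{strictly} positive coefficients, \emph{rational} linear forms $a^{(i)}$, and \emph{rational} coefficients $c_i$. The unifying ingredient is the interior property from Step~1, since it makes positivity an open condition and allows both the redistribution to $M$ positive terms and the perturbation to rational generators to be carried out without losing positivity. Once the $a'^{(i)}$ are rational and the $(a'^{(i)}\cdot x)^{2r}$ form a $\qq$-basis of $V_\qq$, the coefficients are automatically rational as the unique solution of a linear system over $\qq$ with right-hand side $(x_1^2+\cdots+x_n^2)^r \in V_\qq$.
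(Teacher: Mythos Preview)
Your outline matches the sketch the paper gives of Hilbert's own argument (the paper does not prove this preliminary result in detail, only summarizes Hilbert and refers to Pollack): a spherical integral identity places $(x_1^2+\cdots+x_n^2)^r$ inside the cone of sums of $2r$-th powers, a finiteness step extracts a real identity with at most $M$ terms, and a continuity argument passes to rational data. Using Carath\'eodory in place of Hilbert's Riemann-sum approximation is a standard modern variant and is arguably cleaner.

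The one soft spot is the ``redistribution'' in Step~2. Carath\'eodory yields $p=\sum_{i=1}^{k}c_i(a^{(i)}\cdot x)^{2r}$ with $k\le M$ and the forms linearly independent, but for your rational-perturbation argument you genuinely need $k=M$: if $k<M$, perturbing the $a^{(i)}$ moves the $k$-dimensional span and $p$ need no longer lie in it, so no coefficients---rational or otherwise---exist. And $k<M$ really occurs here (e.g.\ $x_1^2+x_2^2=\tfrac12(x_1+x_2)^2+\tfrac12(x_1-x_2)^2$ with $k=2<3=M$). The statement that an interior point of $\mathcal C$ can always be upgraded to an $M$-term strictly positive representation with linearly independent generators is \emph{false} for general generating sets $G$; it holds here because $G=\{(a\cdot x)^{2r}:a\in S^{n-1}\}$ is a connected variety spanning $V_{\rr}$, but that argument has to be supplied rather than invoked as ``a small redistribution of mass.'' One clean fix: choose in advance $M$ rational vectors $a^{(i)}$ with $(a^{(i)}\cdot x)^{2r}$ a basis of $V_{\qq}$, use the interior property to write $p-\varepsilon\sum_i(a^{(i)}\cdot x)^{2r}\in\mathcal C$ for small rational $\varepsilon>0$, get a rational $M$-term representation of this difference by your perturbation argument (now the basis is fixed and rational from the start), and add back the $\varepsilon$-terms; a final Carath\'eodory reduction over $\qq$ brings the total back to $M$ terms if needed. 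Once this is tightened, the rest of your plan goes through as written.
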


Hilbert found his identities in two steps.
First, he showed that if $d\mu$ is a suitably-normalized surface measure on $S^{n-1}$
and $x_i$'s are taken parameters, then
\begin{equation}
\label{eq:int-sphere}
{\int\cdots \int}_{u \in S^{n-1}} (x_1u_1 + \cdots + x_nu_n)^{2r} d\mu = (x_1^2 + \cdots + x_n^2)^r.
\end{equation}
By approximating the integral with a Riemann sum and then using some elementary arguments,
he derived the existence of real Hilbert identities.
Then by a standard continuity argument, Hilbert found his rational identities. 
There have been some expository works which, while mainly concerned with Waring's problem,
described Hilbert's Theorem. For the details, we refer the readers to Pollack~\cite{P09}.

The first simplification of Hilbert's result was made by Hausdorff \cite{Hau09},
who replaced the integral on the left of (\ref{eq:int-sphere}) by
the Gaussian integral
\[{\int\cdots \int}_{u \in \rr^n}e^{-(u_1^2 + \cdots + u_n^2)} (x_1u_1 + \cdots + x_nu_n)^{2r}\ du_1 \cdots du_n,\]
and showed that, up to a constant, the value is $(\sum x_i^2)^r$ again.
Then he constructed an iterated sum which leads to explicit real Hilbert identities
in any number of variables, by using the roots of the Hermite polynomial $H_{2r}$
and then showing the following key lemma:
\begin{theorem}[Hausdorff's Lemma]
\label{lem:haus}
Let $r$ be a positive integer. Then there exist rationals $x_1, \ldots, x_{r+1}$, $y_1, \ldots, y_{r+1}$ such that
\begin{equation}
\label{eq:hausdorff-equation1}
\sum_{i=1}^{r+1} x_i y_i^j = \frac{1}{\sqrt{\pi}} \int_{-\infty}^\infty t^j \ e^{-t^2}dt, \qquad j = 0, 1, \ldots, r. 
\end{equation}
\end{theorem}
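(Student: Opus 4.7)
The plan is to exploit the fact that the moments of the Gaussian weight are rational, so the problem reduces to solving a single linear system with rational data over $\qq$, after freely fixing the nodes.

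First I would record the moments $\mu_j := \frac{1}{\sqrt{\pi}} \int_{-\infty}^\infty t^j e^{-t^2}\,dt$. By oddness $\mu_{2m+1}=0$, and integration by parts gives the recursion $\mu_{2m+2} = \frac{2m+1}{2}\mu_{2m}$ with $\mu_0 = 1$, yielding $\mu_{2m} = (2m-1)!!/2^m \in \qq$. Thus every $\mu_j$ with $0\le j\le r$ is rational.

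Next I would observe that the theorem allows $2(r+1)$ unknowns but imposes only $r+1$ conditions, so the nodes $y_i$ can be chosen freely. Pick any $r+1$ distinct rationals $y_1,\dotsc,y_{r+1}\in\qq$ (for instance $y_i=i$). The desired identities
\[
   \sum_{i=1}^{r+1} x_i y_i^j = \mu_j, \qquad j=0,1,\dotsc,r,
\]
then form a square linear system in the unknowns $x_1,\dotsc,x_{r+1}$ whose coefficient matrix is the transpose of the Vandermonde matrix $V=(y_i^{\,j})_{0\le j\le r,\,1\le i\le r+1}$. Its determinant $\prod_{1\le i<j\le r+1}(y_j-y_i)$ is a nonzero rational, so the system has a unique solution, and since both $V$ and the right-hand-side vector lie over $\qq$, the solution $(x_1,\dotsc,x_{r+1})$ is rational.

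There is essentially no obstacle here: the statement asks only for existence of rational $x_i, y_i$, with no positivity or symmetry constraint on the weights $x_i$, so the Vandermonde argument suffices. The hard part — and presumably the reason the paper phrases this as a \emph{Lemma} en route to richer applications — only appears once one additionally insists that the weights $x_i$ be positive (so that the resulting sum plugs into a Hilbert identity). In that regime the nodes are forced to be zeros of a quasi-Hermite polynomial by the Riesz--Shohat Theorem~\ref{thm:Shohat}, and proving the existence of \emph{rational} such nodes is naturally reformulated via the discriminant formulas of Sections~\ref{sec:resultant}--\ref{sec:explicit} as the search for $\qq$-rational points on a hyperelliptic curve, which is the core of Section~\ref{sec:applications}.
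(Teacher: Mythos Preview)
Your proof is correct: the moments are rational, the nodes can be chosen freely as distinct rationals, and the resulting Vandermonde system over $\qq$ has a unique rational solution for the weights. The paper itself does not supply a proof of this statement---it is quoted in the preliminaries as a classical result of Hausdorff~\cite{Hau09} (with a pointer to Nesterenko~\cite{N06} for details)---so there is no in-paper argument to compare against. Your elementary Vandermonde argument is exactly the natural one.

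One correction to your closing commentary: the obstruction analyzed in Section~\ref{sec:applications} is not positivity of the weights but the \emph{degree of exactness}. Hausdorff's Lemma demands only degree-$r$ exactness with $r+1$ nodes, which is underdetermined and hence trivial, as you show. The nonexistence results (Corollary~\ref{cor:non-exist}) concern degree-$2r$ exactness with $r+1$ nodes; it is this near-Gaussian constraint that, via the Riesz--Shohat Theorem~\ref{thm:Shohat} (applied in Proposition~\ref{prop:ratio-shohat}), forces the nodes to be the zeros of some quasi-Hermite polynomial $H_{r+1;c}$ and thereby leads to the hyperelliptic-curve analysis of $\disc(H_{r+1;c})$. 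Positivity of weights matters for the historical application to Hilbert identities, but it is not what drives the rigidity in Section~\ref{sec:applications}.
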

Hausdorff then quickly argued that the real coefficients may be replaced by rational ones,
and completed another proof of Hilbert's Lemma.
For example, see Nesterenko~\cite{N06} for more details and further refinements to Hausdorff's result.

Diophantine equations of type (\ref{eq:hausdorff-equation1}) are important as quadrature formulas.
Let $\xi$ be a positive Borel measure on an interval $(a,b)$.
Let $x_1, \ldots, x_m \in \rr$ and $y_1, \ldots, y_m \in (a,b)$.
A {\it quadrature formula of degree $t$} is an integration formula of type
\begin{equation} \label{eq:cuba}
\sum_{i=1}^m x_i f(y_i) = \int_a^b f(x) \ d\xi
\end{equation}
in which $f$ ranges over all polynomials of degree at most $t$.
The points $y_i$ are called {\it nodes} and coefficients $x_i$ are called {\it weights}.
A quadrature formula is {\it positive} if all weights are positive.
We see that the equations (\ref{eq:hausdorff-equation1}) are equivalent to
a {\it rational quadrature}, meaning a quadrature formula of degree $r$ for Gaussian integration
$\frac{1}{\sqrt{\pi}} \int_{-\infty}^\infty \ e^{-t^2}dt$
with rational nodes and weights.
In Subsection~\ref{subsec:HDeq},
we formulate Diophantine equations of type (\ref{eq:hausdorff-equation1}) in general.

The concept of quadrature formula is simply generalized to higher dimensions
and integrands may be also replaced by the homogeneous polynomials.
A {\it cubature formula of index $t$} is an integration formula of type (\ref{eq:cuba})
in which $f$ ranges over all homogeneous polynomials of degree $t$.
The relationship of Hilbert identities to index-type cubature formulas for $\int_{S^{n-1}} \ d\rho$,
where $\rho$ is a surface measure on $S^{n-1}$,
goes back to the 19th century at least~\cite{Ry70}.
Interest was revived in the development of \emph{spherical designs}
by Delsarte, Goethals and Seidel in the 1970s~\cite{DGS77}.
By a suitable scaling of weights and nodes,
cubature formulas for $\int_{S^{n-1}} \ d\rho$ and
${\int \cdots \int}_{\rr^n} \ e^{-(u_1^2 + \cdots + u_n^2)} du_1 \cdots du_n$
can be transformed to each other (cf.~\cite{BB04,NS14}).
We can easily construct a cubature formula for Gaussian integration
by taking copies of a quadrature formula for $\frac{1}{\sqrt{\pi}} \int_{-\infty}^\infty \ e^{-t^2} dt$
and then taking their convolutions.
This is an example of the widely-used method in the study of cubature formulas,
called {\it product construction}~\cite{S71},
and explains why Hausdorff's simplification works well.

\section{Compact formulas for resultants of classical quasi-orthogonal polynomials} \label{sec:resultant}

In this section, we first establish a general formula for the resultant of type $\Res(\Phi_n(x)+s\Phi_{n-1}(x),\Phi_{n-1}(x)+t\Phi_{n-2}(x))$, where $\{\Phi_m\}$ is any sequence of orthogonal polynomials and the constants $s,t$ are arbitrarily chosen.
We then derive, as a specialization of this result, compact formulas for the resultants of all classical quasi-orthogonal polynomials.

\begin{theorem}
\label{thm:res-general}
Let $\{\Phi_m\}$ be a sequence of polynomials satisfying
\begin{equation}\label{eq:rho2}
\begin{split}
	\Phi_m(x) &= (a_m x + b_m) \Phi_{m-1}(x) - c_m \Phi_{m-2}(x), \\
	\Phi_0(x) &= 1, \quad \Phi_1(x) = a_1 x + b_1,
\end{split}
\end{equation}
where $a_m$, $b_m$, $c_m$ are constants with $a_m c_m \neq 0$.
Let $\Phi_{m;c}(x)=\Phi_{m}(x)+c\Phi_{m-1}(x)$ for a constant $c$.
Let $n\ge 2$ be an integer and let $s$ and $t$ be constants.
Then
\begin{equation} \label{eq:res-general}
\begin{split}
	\Res(\Phi_{n;s}, \Phi_{n-1;t})
	&= (-1)^{n(n+1)/2} a_n^n c_n^{-n}
	\prod_{k=1}^{n} a_k^{2n-2k-1} c_k^{k-1} \\
	&\quad \cdot t^n
\Phi_{n;s}\left(-\frac{c_n + b_n t + s t}{a_n t}\right).
\end{split}
\end{equation}
In particular, if $t=0$, then
\begin{equation} \label{eq:res-general-t0}
	\Res(\Phi_{n;s}, \Phi_{n-1})
	= \Res(\Phi_n, \Phi_{n-1})
	= (-1)^{n(n-1)/2} \prod_{k=1}^{n} a_k^{2n-2k} c_k^{k-1}.
\end{equation}
\end{theorem}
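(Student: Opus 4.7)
The plan is to apply the product formula \eqref{eq:resultant} at the zeros $y_1, \dotsc, y_n$ of $\Phi_{n;s}$. Writing $L = \prod_{k=1}^n a_k$ for the common leading coefficient of $\Phi_n$ and $\Phi_{n;s}$, the starting point is
\[
    \Res(\Phi_{n;s}, \Phi_{n-1;t}) = L^{n-1} \prod_{k=1}^n \Phi_{n-1;t}(y_k),
\]
and the task is to factor each $\Phi_{n-1;t}(y_k)$ into a piece that collapses to a single evaluation of $\Phi_{n;s}$ at the point $\gamma$ appearing in \eqref{eq:res-general} and a piece of the form $\Phi_{n-1}(y_k)$ that is accessible through Schur's method.

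The necessary factorization is supplied by the recurrence \eqref{eq:rho2} at index $n$. Since $\Phi_{n;s}(y_k)=0$ gives $\Phi_n(y_k) = -s\,\Phi_{n-1}(y_k)$, solving the recurrence for $\Phi_{n-2}(y_k)$ yields $c_n \Phi_{n-2}(y_k) = (a_n y_k + b_n + s)\Phi_{n-1}(y_k)$. Inserting this into $\Phi_{n-1;t} = \Phi_{n-1} + t\Phi_{n-2}$ gives
\[
    \Phi_{n-1;t}(y_k) = \frac{a_n t}{c_n}\,\Phi_{n-1}(y_k)\,(y_k - \gamma), \qquad \gamma = -\frac{c_n + b_n t + s t}{a_n t},
\]
which is exactly the split I need. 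Taking the product over $k$ produces two subproducts that can be handled separately.

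Because $\Phi_{n;s}(x) = L\prod_k(x-y_k)$, one has $\prod_{k=1}^n(y_k-\gamma) = (-1)^n\Phi_{n;s}(\gamma)/L$, furnishing the evaluation factor in \eqref{eq:res-general}. For $\prod_{k=1}^n \Phi_{n-1}(y_k)$ the key observation is $\Phi_{n;s} \equiv \Phi_n \pmod{\Phi_{n-1}}$, so by \eqref{eq:resultant} (or Lemma~\ref{lemma:Euclidean}) we have $\Res(\Phi_{n;s},\Phi_{n-1}) = \Res(\Phi_n,\Phi_{n-1})$; Schur's Lemma~\ref{lemma:delta} applied at the zeros of $\Phi_n$ then evaluates the latter, and after dividing by $L^{n-1}$ one obtains $\prod_{k=1}^n \Phi_{n-1}(y_k) = (-1)^{n(n-1)/2}\prod_{k=1}^n a_k^{n-2k+1}c_k^{k-1}$. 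Combining the two subproducts with the prefactor $L^{n-1}(a_n t/c_n)^n$ and consolidating the $a_k$-exponents through $L^{n-2}\prod_{k=1}^n a_k^{n-2k+1} = \prod_{k=1}^n a_k^{2n-2k-1}$ together with the sign identity $(-1)^{n(n-1)/2+n} = (-1)^{n(n+1)/2}$ reproduces \eqref{eq:res-general}, while \eqref{eq:res-general-t0} falls out directly from the same identity $\Res(\Phi_{n;s},\Phi_{n-1}) = \Res(\Phi_n,\Phi_{n-1})$ combined with Schur's Lemma, avoiding any limiting argument in $t$. The main obstacle I anticipate is just the exponent and sign bookkeeping just described; there is no genuine difficulty once the recurrence-based factorization of $\Phi_{n-1;t}(y_k)$ is in place.
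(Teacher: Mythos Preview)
Your argument is correct and is essentially the paper's proof viewed through the product formula rather than through the Euclidean lemma. Both hinge on the same recurrence-based identity: the paper writes $\Phi_{n-1;t}(x) = -c_n^{-1}t\,\Phi_{n;s}(x) + c_n^{-1}(a_n t x + c_n + b_n t + st)\,\Phi_{n-1}(x)$ as a polynomial identity and then applies Lemma~\ref{lemma:Euclidean} and multiplicativity \eqref{eq:res-mult} to factor $\Res(\Phi_{n;s},\Phi_{n-1;t})$, whereas you evaluate that same identity at the zeros $y_k$ of $\Phi_{n;s}$ (where the first term drops out) and take the product directly. The remaining ingredients---$\Res(\Phi_{n;s},\Phi_{n-1})=\Res(\Phi_n,\Phi_{n-1})$ and Schur's Lemma~\ref{lemma:delta}---are used identically in both proofs, and the bookkeeping with $L^{n-2}\prod a_k^{n-2k+1}=\prod a_k^{2n-2k-1}$ is the same.
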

\begin{proof}[Proof of Theorem~\ref{thm:res-general}]
We first prove \eqref{eq:res-general-t0}.
Let $l_n$ be the leading coefficient of $\Phi_n(x)$
and let $y_1, \dotsc, y_n$ be the zeros of $\Phi_n(x)$.
By Lemma~\ref{lemma:Euclidean} and \eqref{eq:resultant},
\[
	\Res(\Phi_{n;s}, \Phi_{n-1})
	= \Res(\Phi_n, \Phi_{n-1})
	= l_n^{n-1} \prod_{i=1}^n \Phi_{n-1}(y_i).
\]
We have $l_n=\prod_{k=1}^n a_k$ by \eqref{eq:rho2}.
Hence, by Lemma~\ref{lemma:delta},
\[
	l_n^{n-1} \prod_{i=1}^n \Phi_{n-1}(y_i)
	= (-1)^{n(n-1)/2} \prod_{k=1}^n a_k^{2n-2k} c_k^{k-1}.
\]

Next, we prove \eqref{eq:res-general}.
By \eqref{eq:rho2},
\begin{align*}
	\Phi_{n-1;t}(x) &= \Phi_{n-1}(x) + t \Phi_{n-2}(x) \\
	&= - c_n^{-1} t \Phi_n(x)
	+ (1 + c_n^{-1} t (a_n x + b_n)) \Phi_{n-1}(x) \\
	&= - c_n^{-1} t \Phi_{n;s}(x)
	+ c_n^{-1} (a_n t x + c_n + b_n t + s t) \Phi_{n-1}(x).
\end{align*}
Hence, by Lemma~\ref{lemma:Euclidean} and \eqref{eq:res-mult},
\begin{align*}
	&\Res(\Phi_{n;s}, \Phi_{n-1;t}) \\
	&= \Res(\Phi_{n;s}(x), - c_n^{-1} t \Phi_{n;s}(x)
	+ c_n^{-1} (a_n t x + c_n + b_n t + s t) \Phi_{n-1}(x)) \\
	&= l_n^{-1} \Res(\Phi_{n;s}(x),
	c_n^{-1} (a_n t x + c_n + b_n t + s t) \Phi_{n-1}(x)) \\
	&= l_n^{-1} \Res(\Phi_{n;s},
	c_n^{-1} (a_n t x + c_n + b_n t + s t))
	\Res(\Phi_{n;s}, \Phi_{n-1}).
\end{align*}
By \eqref{eq:resultant},
\[
	\Res(\Phi_{n;s}, c_n^{-1} (a_n t x + c_n + b_n t + s t))
	= (-1)^n \left(\frac{a_n t}{c_n}\right)^n
	\Phi_{n;s}\left(-\frac{c_n + b_n t + s t}{a_n t}\right).
\]
Therefore, by \eqref{eq:res-general-t0},
\begin{align*}
	\Res(\Phi_{n;s}, \Phi_{n-1;t})
	&= (-1)^{n(n+1)/2} a_n^n c_n^{-n}
	\prod_{k=1}^{n} a_k^{2n-2k-1} c_k^{k-1} \\
	&\quad \cdot t^n \Phi_{n;s}\left(-\frac{c_n + b_n t + s t}{a_n t}\right). \qedhere
\end{align*}
\end{proof}

\subsection{Classical quasi-orthogonal polynomials} \label{subsec:res-QJ}

We here describe explicit formulas for the resultants of classical quasi-orthogonal polynomials.

\begin{theorem} \label{thm:res-QJ}
Let $P_{n;c}^{(\alpha,\beta)}(x)
=P_n^{(\alpha,\beta)}(x)+cP_{n-1}^{(\alpha,\beta)}(x)$
for a constant $c$.
Let $n\ge 2$ be an integer and let $s$ and $t$ be constants.
Then
\begin{equation} \label{eq:res-QJ}
\begin{split}
	&\Res(P^{(\alpha,\beta)}_{n;s}, P^{(\alpha,\beta)}_{n-1;t}) \\
	&= \frac{(-1)^{n(n+1)/2} (2n+\alpha+\beta)^{n-2}
	(2n+\alpha+\beta-1)^n (2n+\alpha+\beta-2)^n}
	{2^{n(n-1)} (n+\alpha-1)^n (n+\beta-1)^n} \\
	&\quad \cdot \prod_{k=1}^n k^{k-2n+2}
	\prod_{k=1}^{n-1} (k+\alpha)^k (k+\beta)^k (n+k+\alpha+\beta)^{n-k-2} \\
	&\quad \cdot t^n P^{(\alpha,\beta)}_{n;s}
	\biggl(-\frac{2 (n + \alpha - 1) (n + \beta - 1)}
	{(2n + \alpha + \beta - 1) (2n + \alpha + \beta - 2) t} \\
	&\quad - \frac{\alpha^2 - \beta^2}
	{(2n + \alpha + \beta) (2n + \alpha + \beta - 2)}
	- \frac{2n (n + \alpha + \beta) s}
	{(2n + \alpha + \beta) (2n + \alpha + \beta - 1)}\biggr).
\end{split}
\end{equation}
In particular, if $t=0$, then
\begin{equation} \label{eq:res-QJ-t0}
\begin{split}
	\Res(P^{(\alpha,\beta)}_{n;s}, P^{(\alpha,\beta)}_{n-1})
	&= \frac{(-1)^{n(n-1)/2} (2n+\alpha+\beta)^{n-1}}
	{2^{n(n-1)}} \prod_{k=1}^n k^{k-2n+1} \\
	&\quad \cdot \prod_{k=1}^{n-1}
	(k+\alpha)^k (k+\beta)^k (n+k+\alpha+\beta)^{n-k-1}.
\end{split}
\end{equation}
\end{theorem}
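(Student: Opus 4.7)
The plan is to derive both \eqref{eq:res-QJ-t0} and \eqref{eq:res-QJ} as direct specializations of \eqref{eq:res-general-t0} and \eqref{eq:res-general} in Theorem~\ref{thm:res-general}, by substituting the three-term recurrence constants of the Jacobi polynomials and simplifying the resulting products.

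First I would invoke the standard three-term recurrence of the Jacobi polynomials (collected in Appendix~\ref{appendix:COP}), namely
\begin{align*}
2m(m+\alpha+\beta)(2m+\alpha+\beta-2)P_m^{(\alpha,\beta)}(x)
&=(2m+\alpha+\beta-1)\bigl[(2m+\alpha+\beta)(2m+\alpha+\beta-2)x \\
&\quad{}+\alpha^2-\beta^2\bigr]P_{m-1}^{(\alpha,\beta)}(x) \\
&\quad{}-2(m+\alpha-1)(m+\beta-1)(2m+\alpha+\beta)P_{m-2}^{(\alpha,\beta)}(x),
\end{align*}
so that in the notation of \eqref{eq:rho2} one reads off
\[
a_m=\frac{(2m+\alpha+\beta-1)(2m+\alpha+\beta)}{2m(m+\alpha+\beta)},\qquad c_m=\frac{(m+\alpha-1)(m+\beta-1)(2m+\alpha+\beta)}{m(m+\alpha+\beta)(2m+\alpha+\beta-2)},
\]
together with the analogous expression for $b_m$. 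A routine computation using these identities then gives
\[
-\frac{c_n+b_nt+st}{a_nt}=-\frac{2(n+\alpha-1)(n+\beta-1)}{(2n+\alpha+\beta-1)(2n+\alpha+\beta-2)t}-\frac{\alpha^2-\beta^2}{(2n+\alpha+\beta)(2n+\alpha+\beta-2)}-\frac{2n(n+\alpha+\beta)s}{(2n+\alpha+\beta)(2n+\alpha+\beta-1)},
\]
matching the argument of $P^{(\alpha,\beta)}_{n;s}$ in \eqref{eq:res-QJ}.

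The remaining work is the rearrangement of the scalar prefactor. To obtain \eqref{eq:res-QJ-t0} I would expand $\prod_{k=1}^n a_k^{2n-2k}c_k^{k-1}$ into a power-product in the linear factors $k$, $k+\alpha+\beta$, $2k+\alpha+\beta-j$ ($j=0,1,2$), $k+\alpha-1$, $k+\beta-1$ appearing in $a_k$ and $c_k$. Collecting exponents by the value $m+\alpha+\beta$, a direct count shows that the net exponent vanishes for $0\le m\le n$, equals $2n-m-1$ for $n<m\le 2n-1$, and equals $n-1$ for $m=2n$; reindexing $m=n+k$ then produces the factor $(2n+\alpha+\beta)^{n-1}\prod_{k=1}^{n-1}(n+k+\alpha+\beta)^{n-k-1}$. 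The shifts $\prod_{k=1}^n(k+\alpha-1)^{k-1}=\prod_{k=1}^{n-1}(k+\alpha)^k$ and its $\beta$-counterpart, together with straightforward collection of the powers of $k$ and $2$, yield the other factors. Formula \eqref{eq:res-QJ} then follows either by running the same bookkeeping on $a_n^n c_n^{-n}\prod_{k=1}^n a_k^{2n-2k-1}c_k^{k-1}$, or alternatively by combining \eqref{eq:res-QJ-t0} with the intermediate identity $\Res(\Phi_{n;s},\Phi_{n-1;t})=l_n^{-1}\Res(\Phi_{n;s},a_ntx+c_n+b_nt+st)\Res(\Phi_{n;s},\Phi_{n-1})$ derived inside the proof of Theorem~\ref{thm:res-general}, the linear resultant being evaluated by \eqref{eq:resultant}. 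The principal difficulty is entirely bookkeeping: one must track how the seven kinds of linear factors combine and telescope as $k$ ranges from $1$ to $n$, and verify that the endpoint contributions near $k=1$ and $k=n$ produce precisely the boundary factors displayed in \eqref{eq:res-QJ-t0} and \eqref{eq:res-QJ}, including the sign change from $(-1)^{n(n-1)/2}$ to $(-1)^{n(n+1)/2}$ which is absorbed by the extra factor $(-1)^n$ arising from $\Res(\Phi_{n;s},a_ntx+c_n+b_nt+st)$.
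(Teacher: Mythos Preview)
Your proposal is correct and follows essentially the same route as the paper: identify the recurrence constants $a_m,b_m,c_m$ from \eqref{eq:J-rec}, plug them into Theorem~\ref{thm:res-general}, and simplify the resulting product. The paper's write-up is terser---it records the simplified value of $\prod_{k=1}^n a_k^{2n-2k-1}c_k^{k-1}$ directly rather than narrating the exponent-collection---but your more detailed bookkeeping (tracking the $m+\alpha+\beta$ factors and the boundary contributions) is exactly the computation underlying that line.
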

\begin{proof}[Proof of Theorem~\ref{thm:res-QJ}]
By \eqref{eq:J-rec}, the sequence $\{P^{(\alpha,\beta)}_m\}$ satisfies
\eqref{eq:rho2} for
\[
\begin{gathered}
	a_m = \frac{(2m + \alpha + \beta - 1) (2m + \alpha + \beta)}
	{2m (m + \alpha + \beta)}, \\
	b_m = \frac{(2m + \alpha + \beta - 1) (\alpha^2 - \beta^2)}
	{2m (m + \alpha + \beta) (2m + \alpha + \beta - 2)}, \quad
	c_m = \frac{(m + \alpha - 1) (m + \beta - 1) (2m + \alpha + \beta)}
	{m (m + \alpha + \beta) (2m + \alpha + \beta - 2)}.
\end{gathered}
\]
Since
\begin{align*}
	&\prod_{k=1}^n a_k^{2n-2k-1} c_k^{k-1} \\
	&= \prod_{k=1}^n 2^{2k-2n+1} k^{k-2n+2} (k+\alpha+\beta)^{k-2n+2}
	(2k+\alpha+\beta-1)^{2n-2k-1} \\
	&\quad \cdot\prod_{k=1}^{n-1}
	(k+\alpha)^k (k+\beta)^k (2k+\alpha+\beta)^{2n-2k-2} \\
	&= 2^{-n(n-2)} (2n+\alpha+\beta)^{n-2} \prod_{k=1}^{n} k^{k-2n+2} \\
	&\quad \cdot \prod_{k=1}^{n-1}
	(k+\alpha)^k (k+\beta)^k (n+k+\alpha+\beta)^{n-k-2},
\end{align*}
we obtain \eqref{eq:res-QJ} by Theorem~\ref{thm:res-general}.
The proof of \eqref{eq:res-QJ-t0} is similar.
\end{proof}

The $n$-th \emph{Chebyshev polynomial of the first kind} is defined by
\begin{equation}
\label{eq:Chebyshev_first}
	T_n(x) = \frac{P^{(-1/2,-1/2)}_n(x)}{P^{(-1/2,-1/2)}_n(1)}
	= \binom{n-\frac{1}{2}}{n}^{-1} P^{(-1/2,-1/2)}_n(x).
\end{equation}
When $n\ge 1$, we have
\begin{equation}
\label{eq:C1G}
	T_n(x) = \lim_{\lambda\to 0} \frac{n}{2\lambda} C^{(\lambda)}_n(x).
\end{equation}
The $n$-th \emph{Chebyshev polynomial of the second kind} is defined by
\begin{equation}
\label{eq:Chebyshev_second}
	U_n(x) = (n+1)\frac{P^{(1/2,1/2)}_n(x)}{P^{(1/2,1/2)}_n(1)}
	= C^{(1)}_n(x).
\end{equation}

\begin{corollary} \label{cor:res-QC}
For a constant $c$, let $T_{n;c}(x)=T_n(x)+cT_{n-1}(x)$
and $U_{n;c}(x)=U_n(x)+cU_{n-1}(x)$.
Let $n\ge 2$ be an integer and let $s$ and $t$ be constants.
Then
\begin{align}
	\label{eq:res-T}
	\Res(T_{n;s}, T_{n-1;t})
	&= (-1)^{n(n+1)/2} 2^{n^2-3n+3} t^n
	T_{n;s}\left(-\frac{1+st}{2t}\right), \\
	\label{eq:res-U}
	\Res(U_{n;s}, U_{n-1;t})
	&= (-1)^{n(n+1)/2} 2^{n(n-1)} t^n
	U_{n;s}\left(-\frac{1+st}{2t}\right).
\end{align}
\end{corollary}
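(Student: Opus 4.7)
The plan is to specialize Theorem~\ref{thm:res-general} directly to the two Chebyshev families. Both sequences fit the framework \eqref{eq:rho2} with $b_m=0$ and $c_m=1$: we have $T_0=1$, $T_1=x$, $T_m=2xT_{m-1}-T_{m-2}$ for $m\ge 2$, and $U_0=1$, $U_1=2x$, $U_m=2xU_{m-1}-U_{m-2}$ for $m\ge 2$. Thus for the $U_n$ sequence the parameter $a_m$ equals $2$ uniformly for all $m\ge 1$, while for the $T_n$ sequence the initial condition $T_1=x$ forces $a_1=1$ even though $a_m=2$ for $m\ge 2$. In both cases the hypothesis $a_mc_m\ne 0$ of Theorem~\ref{thm:res-general} is satisfied.

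Since $b_n=0$ and $c_n=1$, the argument of $\Phi_{n;s}$ appearing in \eqref{eq:res-general} simplifies to $-(c_n+b_nt+st)/(a_nt)=-(1+st)/(2t)$ in both families (it depends only on $a_n$, and $a_n=2$ for $n\ge 2$ in both cases). It remains to evaluate the scalar factor $(-1)^{n(n+1)/2}a_n^nc_n^{-n}\prod_{k=1}^na_k^{2n-2k-1}c_k^{k-1}$. For the $U_n$ case, the product is a clean geometric sum,
\[
  \prod_{k=1}^n a_k^{2n-2k-1}c_k^{k-1}
  = 2^{\sum_{k=1}^n(2n-2k-1)}
  = 2^{n(n-2)},
\]
so combined with the prefactor $a_n^n=2^n$ the overall constant is $(-1)^{n(n+1)/2}\,2^{n(n-1)}$, yielding \eqref{eq:res-U}.

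For the $T_n$ case, the only departure from the $U_n$ pattern is the value of $a_1$, so the $k=1$ term of the product must be split off:
\[
  \prod_{k=1}^n a_k^{2n-2k-1}
  = 1^{2n-3}\cdot\prod_{k=2}^n 2^{2n-2k-1}
  = 2^{(n-1)(n-3)}.
\]
Combined with $a_n^n=2^n$, this produces exponent $n+(n-1)(n-3)=n^2-3n+3$ and gives the coefficient $(-1)^{n(n+1)/2}\,2^{n^2-3n+3}$ of \eqref{eq:res-T}. The only genuine subtlety is the exponent bookkeeping in this $T_n$ case: the anomalous value $a_1=1$ must be handled separately before the sum in the exponent telescopes, which is precisely why \eqref{eq:res-T} carries the slightly more awkward exponent $n^2-3n+3$ instead of the clean $n(n-1)$ seen in \eqref{eq:res-U}. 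Everything else is routine substitution.
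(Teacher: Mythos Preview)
Your proof is correct and follows essentially the same route as the paper: specialize Theorem~\ref{thm:res-general} to the three-term recurrences of $T_m$ and $U_m$ with $b_m=0$, $c_m=1$, noting the anomalous $a_1=1$ in the $T$-case, and then sum the exponents. The only cosmetic difference is that the paper writes the constant directly as $a_n^n c_n^{-n}\prod_k a_k^{2n-2k-1}c_k^{k-1}$ and evaluates it in one line, whereas you first compute the product and then multiply by $a_n^n$; the arithmetic is identical.
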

\begin{proof}[Proof of Corollary~\ref{cor:res-QC}]
The sequence $\{T_m\}$ satisfies
\[
	T_m(x) = 2 x T_{m-1}(x) - T_{m-2}(x), \quad
	T_0(x) = 1, \quad T_1(x) = x.
\]
In other words, $\{T_m\}$ satisfies \eqref{eq:rho2}
for $a_1=1$, $a_m=2$ for $m\ge 2$,
and $b_m=0$ and $c_m=1$ for $m\ge 1$.
Then
\[
	a_n^n c_n^{-n} \prod_{k=1}^n a_k^{2n-2k-1} c_k^{k-1}
	= 2^n \prod_{k=2}^n 2^{2n-2k-1} = 2^{n^2-3n+3}.
\]
Therefore we obtain \eqref{eq:res-T} by Theorem~\ref{thm:res-general}.

Similarly, the sequence $\{U_m\}$ satisfies
\[
	U_m(x) = 2 x U_{m-1}(x) - U_{m-2}(x), \quad
	U_0(x) = 1, \quad U_1(x) = 2 x,
\]
that is, $\{U_m\}$ satisfies \eqref{eq:rho2} for
$a_m=2$, $b_m=0$ and $c_m=1$ for $m\ge 1$.
Then
\[
	a_n^n c_n^{-n} \prod_{k=1}^n a_k^{2n-2k-1} c_k^{k-1}
	= 2^n \prod_{k=1}^n 2^{2n-2k-1} = 2^{n(n-1)}.
\]
Therefore we obtain \eqref{eq:res-U} by Theorem~\ref{thm:res-general}.
\end{proof}

\begin{remark}
Dilcher and Stolarsky~\cite[Theorem~2]{DS05} derived the formula
\begin{equation} \label{eq:DS-Theorem2}
\begin{split}
	&\Res(U_{n;s}, U_{n-1;t}) \\
	&= (-1)^{n(n-1)/2} 2^{n(n-1)}
	t^n \left(U_n\left(\frac{1+st}{2t}\right)
	- s U_{n-1}\left(\frac{1+st}{2t}\right)\right).
\end{split}
\end{equation}
The equivalence of \eqref{eq:res-U} and \eqref{eq:DS-Theorem2}
is easily seen since $U_m(-x)=(-1)^mU_m(x)$.
Gishe and Ismail~\cite[Theorem~2.1]{GI08} gave another proof
of \eqref{eq:DS-Theorem2} by using Schur's method (Lemma~\ref{lemma:delta}).
They also derived a formula equivalent to \eqref{eq:res-T}
(see~\cite[Theorem~3.1]{GI08}).
\end{remark}

Next, we describe explicit formulas for the resultants of quasi-Laguerre and quasi-Hermite polynomials.

\begin{theorem}\label{thm:res-QL}
Let $L_{n;c}^{(\alpha)}(x)=L_n^{(\alpha)}(x)+cL_{n-1}^{(\alpha)}(x)$
for a constant $c$.
Let $n\ge 2$ be an integer and let $s$ and $t$ be constants.
Then
\begin{equation} \label{eq:res-L-compact}
\begin{split}
	\Res(L^{(\alpha)}_{n;s}, L^{(\alpha)}_{n-1;t})
	&= \frac{(-1)^{n(n+1)/2}}{(n+\alpha-1)^n}
	\prod_{k=1}^{n} k^{k-2n+2}
	\prod_{k=1}^{n-1} (k+\alpha)^k \\
	& \quad \cdot t^n L^{(\alpha)}_{n;s}\left(
	\frac{n+\alpha-1+(2n+\alpha-1)t+nst}{t}\right).
\end{split}
\end{equation}
\end{theorem}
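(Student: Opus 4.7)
The plan is to obtain Theorem~\ref{thm:res-QL} as a direct specialization of Theorem~\ref{thm:res-general}, paralleling the derivation of Theorem~\ref{thm:res-QJ}. First I would recall the standard three-term recurrence for Laguerre polynomials,
\[
	L_m^{(\alpha)}(x) = \frac{2m+\alpha-1-x}{m} L_{m-1}^{(\alpha)}(x) - \frac{m+\alpha-1}{m} L_{m-2}^{(\alpha)}(x),
\]
with $L_0^{(\alpha)}(x)=1$ and $L_1^{(\alpha)}(x)=1+\alpha-x$ (this would be recorded in Appendix~\ref{appendix:COP} and referenced analogously to how \eqref{eq:J-rec} is used in Theorem~\ref{thm:res-QJ}). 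Reading this recurrence against \eqref{eq:rho2} identifies
\[
	a_m = -\frac{1}{m}, \qquad b_m = \frac{2m+\alpha-1}{m}, \qquad c_m = \frac{m+\alpha-1}{m},
\]
so that $a_mc_m\neq 0$ and the hypothesis of Theorem~\ref{thm:res-general} is satisfied.

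The second step is to substitute these values into the general formula \eqref{eq:res-general}. The argument of $\Phi_{n;s}$ simplifies as
\[
	-\frac{c_n + b_n t + st}{a_n t}
	= -\frac{(n+\alpha-1)/n + (2n+\alpha-1)t/n + st}{-t/n}
	= \frac{n+\alpha-1 + (2n+\alpha-1)t + nst}{t},
\]
which matches the argument on the right-hand side of \eqref{eq:res-L-compact}.

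The third step is the routine but delicate bookkeeping of exponents in the constant prefactor. One computes
\[
	a_n^n c_n^{-n} = \frac{(-1)^n}{(n+\alpha-1)^n},
\]
and, using $(-1)^{2n-2k-1}=-1$ and telescoping the powers of $k$ and of $k+\alpha-1$ (re-indexing $j=k-1$ to pass from $\prod_{k=1}^{n}(k+\alpha-1)^{k-1}$ to $\prod_{k=1}^{n-1}(k+\alpha)^k$), one obtains
\[
	\prod_{k=1}^n a_k^{2n-2k-1} c_k^{k-1}
	= (-1)^n \prod_{k=1}^n k^{k-2n+2} \prod_{k=1}^{n-1} (k+\alpha)^k.
\]
Multiplying these two expressions together cancels the two factors of $(-1)^n$, and combining with the sign $(-1)^{n(n+1)/2}$ from \eqref{eq:res-general} yields exactly the prefactor asserted in \eqref{eq:res-L-compact}.

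The main obstacle is nothing conceptual — Schur's method, folded into Theorem~\ref{thm:res-general}, has already done the essential work — but only the careful accounting of the signs and exponents in the product $\prod_{k=1}^{n} a_k^{2n-2k-1} c_k^{k-1}$, in particular ensuring that the two independent $(-1)^n$ factors combine correctly and that the reindexed product over $k+\alpha$ runs over $1\le k\le n-1$ rather than $1\le k\le n$. Once this arithmetic is verified, the theorem follows immediately from Theorem~\ref{thm:res-general}.
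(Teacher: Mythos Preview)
Your proposal is correct and follows essentially the same approach as the paper's proof: both identify the recurrence coefficients $a_m=-1/m$, $b_m=(2m+\alpha-1)/m$, $c_m=(m+\alpha-1)/m$ from \eqref{eq:L-rec}, simplify the product $\prod_{k=1}^n a_k^{2n-2k-1}c_k^{k-1}=(-1)^n\prod_{k=1}^n k^{k-2n+2}\prod_{k=1}^{n-1}(k+\alpha)^k$, and then invoke Theorem~\ref{thm:res-general}. Your write-up is slightly more explicit about the argument of $L_{n;s}^{(\alpha)}$ and the factor $a_n^n c_n^{-n}$, but the argument is the same.
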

\begin{proof}[Proof of Theorem~\ref{thm:res-QL}]
By \eqref{eq:L-rec}, the sequence $\{L^{(\alpha)}_m\}$ satisfies
\eqref{eq:rho2} for
\[
	a_m = -\frac{1}{m}, \quad
	b_m = \frac{2m + \alpha - 1}{m}, \quad
	c_m = \frac{m + \alpha - 1}{m}.
\]
Since
\begin{align*}
	\prod_{k=1}^n a_k^{2n-2k-1} c_k^{k-1}
	&= \prod_{k=1}^n \frac{(-1)^{2n-2k-1}(k+\alpha-1)^{k-1}}{k^{2n-k-2}} 
        = (-1)^n \prod_{k=1}^{n} k^{k-2n+2}
	\prod_{k=1}^{n-1} (k+\alpha)^k,
\end{align*}
we obtain the theorem by Theorem~\ref{thm:res-general}.
\end{proof}

\begin{theorem}\label{thm:res-QH}
Let $H_{n;c}(x)=H_n(x)+cH_{n-1}(x)$ for a constant $c$.
Let $n\ge 2$ be an integer and let $s$ and $t$ be constants.
Then
\begin{equation} \label{eq:res-H-compact}
	\Res(H_{n;s}, H_{n-1;t}) = 
	\frac{(-1)^{n(n+1)/2} 2^{n(3n-5)/2}}{(n-1)^n} \prod_{k=1}^{n-1} k^k
	\cdot t^n H_{n;s}\left(-\frac{2(n-1)+st}{2t}\right).
\end{equation}
\end{theorem}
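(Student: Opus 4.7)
The proof will proceed by direct specialization of Theorem~\ref{thm:res-general} to the Hermite case, exactly as in the proofs of Corollary~\ref{cor:res-QC} and Theorem~\ref{thm:res-QL}. The only real work is bookkeeping on a few powers of $2$ and factorial-type products.

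First I would recall the standard three-term recurrence for Hermite polynomials,
\[
H_m(x) = 2x\, H_{m-1}(x) - 2(m-1)\, H_{m-2}(x), \qquad H_0(x)=1,\ H_1(x)=2x,
\]
which is the specialization of \eqref{eq:rho2} with
\[
a_m = 2, \quad b_m = 0, \quad c_m = 2(m-1).
\]
With these values, Theorem~\ref{thm:res-general} immediately gives $\Res(H_{n;s},H_{n-1;t})$ in terms of the prefactor $a_n^n c_n^{-n}\prod_{k=1}^n a_k^{2n-2k-1} c_k^{k-1}$, the sign $(-1)^{n(n+1)/2}$, the factor $t^n$, and the evaluation $H_{n;s}\!\left(-\frac{c_n+b_n t+st}{a_n t}\right)$. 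The evaluation point collapses at once to $-\frac{2(n-1)+st}{2t}$ since $b_n=0$, $a_n=2$, $c_n=2(n-1)$, which matches the right-hand side of \eqref{eq:res-H-compact}.

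The remaining step is to collect the prefactor. I would separate it as
\[
a_n^n c_n^{-n} = \frac{2^n}{2^n (n-1)^n} = \frac{1}{(n-1)^n},
\]
then compute $\prod_{k=1}^n a_k^{2n-2k-1} = 2^{\sum_{k=1}^{n}(2n-2k-1)} = 2^{n^2-2n}$, and
\[
\prod_{k=1}^n c_k^{k-1} = \prod_{k=2}^n \bigl(2(k-1)\bigr)^{k-1} = 2^{\,n(n-1)/2}\prod_{j=1}^{n-1} j^{\,j},
\]
using the reindexing $j=k-1$ and $\sum_{j=1}^{n-1} j = n(n-1)/2$. Multiplying the three pieces produces
\[
\frac{1}{(n-1)^n}\cdot 2^{\,n^2-2n+n(n-1)/2}\prod_{j=1}^{n-1} j^{\,j}
= \frac{2^{\,n(3n-5)/2}}{(n-1)^n}\prod_{j=1}^{n-1} j^{\,j},
\]
which is exactly the prefactor appearing in \eqref{eq:res-H-compact}.

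There is no substantive obstacle; the statement is a mechanical specialization once the recurrence coefficients are identified. The only place a careless slip could creep in is the exponent arithmetic for the two powers of $2$ (checking $n^2-2n+n(n-1)/2 = n(3n-5)/2$) and the off-by-one in the $c_k$ product because $c_1=0$ appears but only with exponent $k-1=0$, so it does not enter. Combining these ingredients with Theorem~\ref{thm:res-general} completes the proof.
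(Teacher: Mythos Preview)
Your proof is correct and follows essentially the same approach as the paper: identify the Hermite recurrence coefficients $a_m=2$, $b_m=0$, $c_m=2(m-1)$ and specialize Theorem~\ref{thm:res-general}. The only difference is cosmetic bookkeeping---the paper collapses $a_k^{2n-2k-1}c_k^{k-1}$ into a single term $2^{2n-k-2}(k-1)^{k-1}$ before summing exponents, whereas you separate the $a_k$- and $c_k$-products---but the content is identical, and your remark about the harmless $c_1^{\,0}$ is a nice explicit acknowledgment of a point the paper leaves implicit.
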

\begin{proof}[Proof of Theorem~\ref{thm:res-QH}]
By \eqref{eq:H-rec}, the sequence $\{H_m\}$ satisfies
\eqref{eq:rho2} for
\[
	a_m = 2, \quad
	b_m = 0, \quad
	c_m = 2 (m - 1).
\]
Since
\[
	\prod_{k=1}^n a_k^{2n-2k-1} c_k^{k-1}
	= \prod_{k=1}^n 2^{2n-k-2} (k-1)^{k-1}
	= 2^{n(3n-5)/2} \prod_{k=1}^{n-1} k^k,
\]
we obtain the theorem by Theorem~\ref{thm:res-general}.
\end{proof}

\section{Compact formulas for discriminants of classical quasi-orthogonal polynomials} \label{sec:explicit}

In this section we derive explicit formulas for the discriminants of
quasi-Jacobi, quasi-Laguerre and quasi-Hermite polynomials.
The proof substantially uses the derivative properties of classical orthogonal polynomials.
We first derive a general result and then apply it to specific cases.

\begin{theorem}\label{thm:D-general}
Let $\Phi_n$ and $\Phi_{n-1}$ be polynomials of degree $n$ and $n-1$ respectively.
Assume that
\begin{equation}\label{eq:derivative}
\begin{split}
	\rho(x) \Phi'_n(x) &= (A_n x + B_n) \Phi_n(x) + C_n \Phi_{n-1}(x), \\
	\rho(x) \Phi'_{n-1}(x) &= (D_n x + E_n) \Phi_{n-1}(x) + F_n \Phi_n(x),
\end{split}
\end{equation}
where $\rho(x)$ is a polynomial and
$A_n$, $B_n$, $C_n$, $D_n$, $E_n$, $F_n$ are constants.
Let $c$ be a non-zero constant and let $\Phi_{n;c}(x)=\Phi_{n}(x)+c\Phi_{n-1}(x)$.
Let $l_n$ be the leading coefficient of $\Phi_n$.
Then
\begin{equation} \label{eq:D-general}
\begin{split}
	\disc(\Phi_{n;c})
	&= \frac{(-1)^{n(n+1)/2} (D_n-A_n)^n c^n}{l_n^{2-\deg\rho} \Res(\Phi_{n;c},\rho)}
	\Res(\Phi_n,\Phi_{n-1}) \Phi_{n;c}(\xi_{n;c}),
\end{split}
\end{equation}
where
\[
	\xi_{n;c} = \frac{F_n c^2+(B_n-E_n)c-C_n}{(D_n-A_n)c}.
\]
Furthermore, $\disc(\Phi_{n;c})$
is a polynomial in $c$ of degree at most $2(n-1)$.
\end{theorem}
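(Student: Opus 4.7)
The plan is to reduce $\disc(\Phi_{n;c})$ to a resultant via~\eqref{eq:DR}, namely $\disc(\Phi_{n;c}) = (-1)^{n(n-1)/2} l_n^{-1}\Res(\Phi_{n;c},\Phi_{n;c}')$, and to compute $\Res(\Phi_{n;c},\Phi_{n;c}')$ by exploiting the derivative identities~\eqref{eq:derivative} to kill the $\Phi_{n;c}$-contribution to $\rho(x)\Phi_{n;c}'(x)$ upon evaluation at the zeros of $\Phi_{n;c}$.

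The first computational step is to combine the two identities in~\eqref{eq:derivative} into a single expression for $\rho(x)\Phi_{n;c}'(x) = \rho\Phi_n' + c\rho\Phi_{n-1}'$, and then to substitute $\Phi_n = \Phi_{n;c} - c\Phi_{n-1}$ in order to isolate a $\Phi_{n;c}$-term. A direct collection of coefficients shows that the remaining $\Phi_{n-1}$-factor becomes precisely $c(D_n - A_n)(x - \xi_{n;c})$, with $\xi_{n;c}$ as in the statement. This gives
\[
  \rho(x)\Phi_{n;c}'(x) = P(x)\Phi_{n;c}(x) + c(D_n - A_n)(x - \xi_{n;c})\Phi_{n-1}(x)
\]
for some $P(x)$ whose precise form is irrelevant. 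Applying $\Res(\Phi_{n;c},\cdot)$ to both sides, using~\eqref{eq:res-mult} on the left to split off the factor $\Res(\Phi_{n;c},\rho)$, and evaluating the right-hand side via the product formula~\eqref{eq:resultant} at the zeros $\alpha_1,\ldots,\alpha_n$ of $\Phi_{n;c}$ (which annihilate the $P\Phi_{n;c}$-summand), yields
\[
  \Res(\Phi_{n;c},\rho)\,\Res(\Phi_{n;c},\Phi_{n;c}') = l_n^{\deg\rho + n - 1}\,c^n(D_n - A_n)^n \prod_{i=1}^n(\alpha_i - \xi_{n;c})\prod_{i=1}^n\Phi_{n-1}(\alpha_i).
\]

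To finish, the two products are rewritten as $(-1)^n l_n^{-1}\Phi_{n;c}(\xi_{n;c})$ and $l_n^{-(n-1)}\Res(\Phi_{n;c},\Phi_{n-1})$ by~\eqref{eq:resultant}, and $\Res(\Phi_{n;c},\Phi_{n-1})$ is replaced by $\Res(\Phi_n,\Phi_{n-1})$ (because $\Phi_{n;c}$ and $\Phi_n$ agree at the zeros of $\Phi_{n-1}$, combined with $(-1)^{n(n-1)}=1$ from~\eqref{eq:res-gf}). Plugging back into~\eqref{eq:DR} and collecting the accumulated $l_n$-powers (which telescope to $l_n^{\deg\rho - 2}$) together with the signs $(-1)^{n(n-1)/2 + n} = (-1)^{n(n+1)/2}$ produces~\eqref{eq:D-general}; the degree bound $\deg_c\disc(\Phi_{n;c})\le 2(n-1)$ is then immediate from Proposition~\ref{prop:DLC}(i) applied with $p = \Phi_n$, $q = \Phi_{n-1}$. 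The main obstacle is purely bookkeeping---tracking every $l_n$-exponent and every sign across the several equivalent forms of the resultant---rather than any conceptual difficulty; once the key substitution $\Phi_n = \Phi_{n;c} - c\Phi_{n-1}$ has been performed, the rest of the argument is essentially mechanical.
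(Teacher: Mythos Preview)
Your proof is correct and follows essentially the same route as the paper's: derive the identity $\rho\Phi_{n;c}' = P\,\Phi_{n;c} + c(D_n-A_n)(x-\xi_{n;c})\Phi_{n-1}$, pass to resultants, and reduce $\Res(\Phi_{n;c},\Phi_{n-1})$ to $\Res(\Phi_n,\Phi_{n-1})$. The only cosmetic difference is that the paper invokes Lemma~\ref{lemma:Euclidean} to drop the $P\Phi_{n;c}$-term and then splits $\Res(\Phi_{n;c},L\Phi_{n-1})$ via~\eqref{eq:res-mult}, whereas you evaluate directly at the zeros of $\Phi_{n;c}$ using~\eqref{eq:resultant}; the bookkeeping of $l_n$-exponents and signs comes out identically either way.
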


\begin{proof}[Proof of Theorem~\ref{thm:D-general}]
By \eqref{eq:DR} and \eqref{eq:res-mult},
\begin{equation}\label{eq:DG1}
	\disc(\Phi_{n;c})
	= \frac{(-1)^{n(n-1)/2}}{l_n} \Res(\Phi_{n;c},\Phi'_{n;c})
	= \frac{(-1)^{n(n-1)/2}}{l_n}
	\frac{\Res(\Phi_{n;c},\rho\Phi'_{n;c})}{\Res(\Phi_{n;c},\rho)}.
\end{equation}
By \eqref{eq:derivative},
\begin{align*}
	\rho(x) \Phi'_{n;c}(x)
	&= (A_n x + F_n c + B_n) \Phi_n(x)
	+ (D_n c x + E_n c + C_n) \Phi_{n-1}(x) \\
	&= (A_n x + F_n c + B_n) \Phi_{n;c}(x)
	+ L(x) \Phi_{n-1}(x),
\end{align*}
where $L(x) = (D_n - A_n) c x - F_n c^2 - (B_n - E_n) c + C_n$.
Hence, by Lemma~\ref{lemma:Euclidean} and \eqref{eq:res-mult},
\begin{align}\label{eq:DG2}
	\Res(\Phi_{n;c}, \rho\Phi'_{n;c})
	&= l_n^{\deg\rho-1}\Res(\Phi_{n;c}, L \Phi_{n-1}) \nonumber \\
	&= l_n^{\deg\rho-1}\Res(\Phi_{n;c}, L) \Res(\Phi_{n;c}, \Phi_{n-1}) \nonumber \\
	&= l_n^{\deg\rho-1}\Res(\Phi_{n;c}, L) \Res(\Phi_n, \Phi_{n-1}).
\end{align}
Since $\xi_{n;c}$ is the root of $L$,
by \eqref{eq:resultant},
\begin{equation}\label{eq:DG3}
	\Res(\Phi_{n;c}, L)
	= (-1)^n (D_n - A_n)^n c^n
	\Phi_{n;c}\left(\xi_{n;c}\right).
\end{equation}
Therefore \eqref{eq:D-general} follows from \eqref{eq:DG1}, \eqref{eq:DG2} and \eqref{eq:DG3}.

The latter part of the theorem follows from Proposition~\ref{prop:DLC}.
\end{proof}

\begin{remark}
If $\{\Phi_n\}$ is a sequence of classical orthogonal polynomials,
then it satisfies \eqref{eq:derivative} for all $n$.
Conversely, let $\{\Phi_n\}$ be a sequence of polynomials
satisfying \eqref{eq:derivative} for all $n$.
Then we obtain the three-term relation \eqref{eq:rho2} by eliminating $\rho\Phi'_n$.
Al-Salam and Chihara~\cite{AC72} proved that
if $\{\Phi_n\}$ satisfies \eqref{eq:rho2} and \eqref{eq:derivative},
then $\Phi_n$ is a classical orthogonal polynomial or the Bessel polynomial.
\end{remark}

\subsection{Quasi-Jacobi polynomials} \label{subsec:QJ}

The discriminants of quasi-Jacobi polynomials are computed as follows.

\begin{theorem}\label{thm:DQJ}
Let $c$ be a constant and let
$P_{n;c}^{(\alpha,\beta)}(x)
=P_n^{(\alpha,\beta)}(x)+cP_{n-1}^{(\alpha,\beta)}(x)$.
Then
\begin{align} \label{eq:J-compact}
	\disc(P_{n;c}^{(\alpha,\beta)})
	&= \frac{(2n+\alpha+\beta)^{2n-1}}{2^{n(n-1)}}
	\prod_{k=1}^n k^{k-2n+3} \nonumber \\
	&\quad \cdot \prod_{k=1}^{n-1} (k+\alpha)^{k-1} (k+\beta)^{k-1}
	(n+k+\alpha+\beta)^{n-k-1} \\
	&\quad \cdot \frac{(-c)^n
	P_{n;c}^{(\alpha,\beta)}
	\left(-\frac{2 n (n + \alpha + \beta) c^2
	+ (\alpha^2 - \beta^2) c
	+ 2 (n + \alpha)(n + \beta)}{(2 n + \alpha + \beta)^2 c}\right)}
	{(n+\alpha+cn)(n+\beta-cn)}. \nonumber 
\end{align}
Furthermore, $\disc(P_{n;c}^{(\alpha,\beta)})$
is a polynomial in $c$ of degree $2(n-1)$.
\end{theorem}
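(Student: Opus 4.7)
The plan is to specialize Theorem~\ref{thm:D-general} to $\Phi_n = P_n^{(\alpha,\beta)}$ with $\rho(x) = 1 - x^2$. The classical Jacobi polynomials satisfy a pair of derivative identities
\[
(1-x^2)\frac{d}{dx}P_n^{(\alpha,\beta)}(x) = (A_n x + B_n)P_n^{(\alpha,\beta)}(x) + C_n P_{n-1}^{(\alpha,\beta)}(x),
\]
and a companion relation with $P_n^{(\alpha,\beta)}$ and $P_{n-1}^{(\alpha,\beta)}$ interchanged on the right (producing constants $D_n, E_n, F_n$); both are standard and can be read off from the appendix. My first step is to write these six constants explicitly in terms of $n, \alpha, \beta$, and then verify that the expression
\[
\xi_{n;c} \;=\; \frac{F_n c^2 + (B_n - E_n) c - C_n}{(D_n - A_n) c}
\]
simplifies to the argument $-\frac{2n(n+\alpha+\beta)c^2 + (\alpha^2-\beta^2)c + 2(n+\alpha)(n+\beta)}{(2n+\alpha+\beta)^2 c}$ appearing in \eqref{eq:J-compact}. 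This is purely symbolic manipulation once the constants are in hand.

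Next I would compute the two ingredients that feed Theorem~\ref{thm:D-general}. For the denominator, \eqref{eq:resultant} gives
\[
\Res\bigl(P_{n;c}^{(\alpha,\beta)},\,1-x^2\bigr) \;=\; \pm\,l_n^2\, P_{n;c}^{(\alpha,\beta)}(1)\,P_{n;c}^{(\alpha,\beta)}(-1),
\]
and using the well-known values $P_n^{(\alpha,\beta)}(1) = \binom{n+\alpha}{n}$ and $P_n^{(\alpha,\beta)}(-1) = (-1)^n \binom{n+\beta}{n}$, one extracts
\[
P_{n;c}^{(\alpha,\beta)}(1) = \tbinom{n-1+\alpha}{n-1}\cdot\tfrac{n+\alpha+cn}{n}, \qquad
P_{n;c}^{(\alpha,\beta)}(-1) = (-1)^n\tbinom{n-1+\beta}{n-1}\cdot\tfrac{n+\beta-cn}{n}.
\]
This produces precisely the factor $(n+\alpha+cn)(n+\beta-cn)$ that sits in the denominator of \eqref{eq:J-compact}. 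The other resultant $\Res(P_n^{(\alpha,\beta)}, P_{n-1}^{(\alpha,\beta)})$ is already supplied by \eqref{eq:res-QJ-t0}, which is the $t=0$ case of Theorem~\ref{thm:res-QJ}.

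Substituting these ingredients into \eqref{eq:D-general} is then a matter of assembling the pieces and simplifying the constant prefactor. The leading coefficient $l_n = 2^{-n}\binom{2n+\alpha+\beta}{n}$ combines with $(D_n - A_n)^n$, the $l_n^{2-\deg\rho} = 1$ factor, the binomial ratios coming from $P_{n;c}^{(\alpha,\beta)}(\pm 1)$, and the product in \eqref{eq:res-QJ-t0} to yield the stated coefficient $\frac{(2n+\alpha+\beta)^{2n-1}}{2^{n(n-1)}}\prod_{k=1}^n k^{k-2n+3}\prod_{k=1}^{n-1}(k+\alpha)^{k-1}(k+\beta)^{k-1}(n+k+\alpha+\beta)^{n-k-1}$. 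I expect the main obstacle to lie not in any conceptual step but in carefully executing this bookkeeping: tracking index shifts, signs, and telescoping of the products to recover the exact exponents claimed in \eqref{eq:J-compact}.

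For the final assertion that $\disc(P_{n;c}^{(\alpha,\beta)})$ is a polynomial in $c$ of degree exactly $2(n-1)$, I would invoke Proposition~\ref{prop:DLC}(i): since $P_{n-1}^{(\alpha,\beta)}$ is an orthogonal polynomial its $n-1$ zeros lie in $(-1,1)$ and are simple, so the equality case of that proposition applies.
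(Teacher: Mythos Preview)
Your plan is exactly the paper's proof: specialize Theorem~\ref{thm:D-general} with $\rho(x)=1-x^2$, compute $D_n-A_n$ and $\xi_{n;c}$ from the derivative identities \eqref{eq:DEJ1a}--\eqref{eq:DEJ1b}, evaluate $\Res(P_{n;c}^{(\alpha,\beta)},\rho)$ via the endpoint values $P_{n;c}^{(\alpha,\beta)}(\pm 1)$, insert \eqref{eq:res-QJ-t0}, and close with Proposition~\ref{prop:DLC}. One small slip to fix in the bookkeeping: by \eqref{eq:resultant} one has $\Res\bigl(P_{n;c}^{(\alpha,\beta)},\,1-x^2\bigr)=(-1)^n\,P_{n;c}^{(\alpha,\beta)}(1)\,P_{n;c}^{(\alpha,\beta)}(-1)$, with no factor of $l_n^2$ (the $b_0^n$ in \eqref{eq:resultant} refers to the leading coefficient of $1-x^2$, which is $-1$, not to $l_n$).
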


\begin{remark} Taking the limit as $c\to 0$, we have
\[
	\disc(P_n^{(\alpha,\beta)}) = 2^{-n(n-1)} \prod_{k=1}^n k^{k-2n+2} (k+\alpha)^{k-1}(k+\beta)^{k-1} (n+k+\alpha+\beta)^{n-k}.
\]
This formula coincides with Stieltjes's formula~\cite[(6.71.5)]{Sze}.
\end{remark}

\begin{proof}[Proof of Theorem~\ref{thm:DQJ}]
By \eqref{eq:DEJ1a} and \eqref{eq:DEJ1b},
$P_n^{(\alpha,\beta)}$ and $P_{n-1}^{(\alpha,\beta)}$ satisfies
\eqref{eq:derivative} for $\rho(x)=1-x^2$ and suitable constants.
Furthermore, we have
\begin{equation}\label{eq:DQJ1}
\begin{split}
	D_n - A_n &= 2 n + \alpha + \beta, \\
	\xi_{n;c} &= -\frac{2 n (n + \alpha + \beta) c^2
	+ (\alpha^2 - \beta^2) c
	+ 2 (n + \alpha)(n + \beta)}{(2 n + \alpha + \beta)^2 c}.
\end{split}
\end{equation}

By \eqref{eq:resultant},
\[
	\Res(P_{n;c}^{(\alpha,\beta)}, \rho)
	= (-1)^n P_{n;c}^{(\alpha,\beta)}(1) P_{n;c}^{(\alpha,\beta)}(-1).
\]
By \eqref{eq:J-explicit},
\begin{align*}
	P_{n;c}^{(\alpha,\beta)}(1)
	&= \binom{n+\alpha}{n}+c\binom{n-1+\alpha}{n-1}, \\
	P_{n;c}^{(\alpha,\beta)}(-1)
	&= (-1)^n \left(\binom{n+\beta}{n}-c\binom{n-1+\beta}{n-1}\right).
\end{align*}
Hence we have
\begin{equation}\label{eq:DQJ2}
\begin{split}
	&\Res(P_{n;c}^{(\alpha,\beta)}, \rho) \\
	&= \left(\binom{n+\alpha}{n}+c\binom{n-1+\alpha}{n-1}\right)
	\left(\binom{n+\beta}{n}-c\binom{n-1+\beta}{n-1}\right) \\
	&= \frac{(n+\alpha+cn)(n+\beta-cn)}{(n!)^2}
	\prod_{k=1}^{n-1} (k+\alpha) (k+\beta).
\end{split}
\end{equation}

Therefore, by Theorems~\ref{thm:res-QJ} and \ref{thm:D-general}, \eqref{eq:DQJ1} and \eqref{eq:DQJ2},
\begin{align*}
	\disc(P_{n;c}^{(\alpha,\beta)})
	&= \frac{(-1)^{n(n+1)/2} (D_n-A_n)^n c^n}{\Res(P_{n;c},\rho)}
	\Res(P_n^{(\alpha,\beta)},P_{n-1}^{(\alpha,\beta)})
	P_{n;c}^{(\alpha,\beta)}(\xi_{n;c}) \\
	&= \frac{(2n+\alpha+\beta)^{2n-1} (-c)^n
	P_{n;c}^{(\alpha,\beta)}(\xi_{n;c}^{(\alpha,\beta)})}
	{2^{n(n-1)} (n+\alpha+cn)(n+\beta-cn)}
	\prod_{k=1}^n k^{k-2n+3} \\
	&\quad \cdot \prod_{k=1}^{n-1} (k+\alpha)^{k-1} (k+\beta)^{k-1}
	(n+k+\alpha+\beta)^{n-k-1}.
\end{align*}

The latter part of the corollary follows
from Propositions~\ref{prop:DLC} and~\ref{prop:zero3}.
\end{proof}

We now describe some specializations of Theorem~\ref{thm:DQJ}.

For $\lambda \in \rr$ and $0 < n \in \zz$, we define
\[
	(\lambda)_0 = 1, \quad
	(\lambda)_n = \lambda (\lambda+1) \dotsm (\lambda+n-1).
\]
The $n$-th \emph{Gegenbauer polynomial} is defined by
\begin{equation}
\label{eq:Gegenbauer}
	C^{(\lambda)}_n(x) =
	\frac{(2\lambda)_n}{(\lambda+\frac{1}{2})_n}
	P^{(\lambda-1/2,\lambda-1/2)}_n(x).
\end{equation}
These polynomials often appear in the study of spherical designs (cf.~\cite{BD79,DGS77}).

\begin{corollary}\label{cor:DQG}
Let $c$ be a constant and let
$C_{n;c}^{(\lambda)}(x)
=C^{(\lambda)}_n(x)+c C^{(\lambda)}_{n-1}(x)$.
Then
\begin{equation} \label{eq:G-compact}
\begin{gathered}
	\disc(C_{n;c}^{(\lambda)})
	= 2^{n(n-1)} (2n+2\lambda-1)^n
	\prod_{k=1}^n k^{k-2n+3} (k+\lambda-1)^{2n-2k} \\
	\cdot \prod_{k=1}^{n-1}
	(k+2\lambda-1)^{k-2}
	\cdot \frac{(-c)^n C^{(\lambda)}_{n;c}
	\left(-\frac{nc^2+n+2\lambda-1}{(2n+2\lambda-1)c}\right)}
	{(n+2\lambda-1)^2-(cn)^2}.
\end{gathered}
\end{equation}
Furthermore, $\disc(C_{n;c}^{(\lambda)})$ is an even polynomial in $c$ of degree $2(n-1)$.
\end{corollary}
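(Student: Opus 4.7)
The plan is to deduce \eqref{eq:G-compact} from Theorem~\ref{thm:DQJ} by rescaling the Gegenbauer case to the Jacobi case. Set $\alpha = \lambda - \tfrac12$ and $k_n = (2\lambda)_n/(\lambda+\tfrac12)_n$, so that by \eqref{eq:Gegenbauer} we have $C^{(\lambda)}_n(x) = k_n P^{(\alpha,\alpha)}_n(x)$. Then
\[
C^{(\lambda)}_{n;c}(x) = k_n P^{(\alpha,\alpha)}_n(x) + c\,k_{n-1}P^{(\alpha,\alpha)}_{n-1}(x) = k_n\, P^{(\alpha,\alpha)}_{n;c'}(x),
\qquad c' := c\,\frac{k_{n-1}}{k_n},
\]
and a brief Pochhammer-symbol computation yields $c' = c(2n+2\lambda-1)/[2(n+2\lambda-1)]$. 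By Proposition~\ref{prop:DS4.3}, $\disc(C^{(\lambda)}_{n;c}) = k_n^{2(n-1)}\,\disc(P^{(\alpha,\alpha)}_{n;c'})$, and we substitute Theorem~\ref{thm:DQJ} with $\alpha = \beta = \lambda - \tfrac12$.

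Because $\alpha^2 - \beta^2 = 0$, the argument $\xi$ of $P^{(\alpha,\alpha)}_{n;c'}$ inside Theorem~\ref{thm:DQJ} reduces to $-[2n(n+2\lambda-1)(c')^2 + 2(n+\lambda-\tfrac12)^2]/[(2n+2\lambda-1)^2 c']$, and plugging in the value of $c'$ directly collapses this to $-(nc^2+n+2\lambda-1)/[(2n+2\lambda-1)c]$, matching the argument of $C^{(\lambda)}_{n;c}$ in \eqref{eq:G-compact}. Similarly the denominator $(n+\alpha+c'n)(n+\beta-c'n)$ becomes $\tfrac{(2n+2\lambda-1)^2}{4(n+2\lambda-1)^2}\bigl[(n+2\lambda-1)^2 - (cn)^2\bigr]$, producing the denominator in \eqref{eq:G-compact}. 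What remains is to combine the prefactor $k_n^{2(n-1)}$, the identity $P^{(\alpha,\alpha)}_{n;c'}(\xi) = k_n^{-1}C^{(\lambda)}_{n;c}(\xi)$, the factor $(-c')^n = (-c)^n(2n+2\lambda-1)^n/[2^n(n+2\lambda-1)^n]$, and the products in Theorem~\ref{thm:DQJ} — using the expansion $k_n = 2^n\prod_{k=1}^n (2\lambda+k-1)/(2\lambda+2k-1)$ — into the compact product of \eqref{eq:G-compact}. The final bookkeeping, in particular the shift from the Jacobi exponent $k-1$ in $(k+\lambda-\tfrac12)^{2(k-1)}$ to the Gegenbauer exponent $k-2$ in $(k+2\lambda-1)^{k-2}$ via the rescaling by $k_n$, is the main obstacle and the step most prone to arithmetic slip.

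For the ``furthermore'' claim, $C^{(\lambda)}_m(-x) = (-1)^m C^{(\lambda)}_m(x)$ (inherited from $P^{(\alpha,\alpha)}_m$), so Proposition~\ref{prop:DLC}(ii) shows that $\disc(C^{(\lambda)}_{n;c})$ is even in $c$. The degree equals $2(n-1)$ by Proposition~\ref{prop:DLC}(i), since $C^{(\lambda)}_{n-1}$ is an orthogonal polynomial on $(-1,1)$ and therefore has $n-1$ distinct real zeros.
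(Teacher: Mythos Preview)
Your approach is essentially identical to the paper's: both rescale $C^{(\lambda)}_{n;c}$ to $k_n P^{(\alpha,\alpha)}_{n;c'}$ with $c' = c(2n+2\lambda-1)/[2(n+2\lambda-1)]$, apply Proposition~\ref{prop:DS4.3} and Theorem~\ref{thm:DQJ} with $\alpha=\beta=\lambda-\tfrac12$, and then carry out the product simplification you flag as the main bookkeeping step. Your treatment of the ``furthermore'' claim via Proposition~\ref{prop:DLC} and the simplicity of the zeros of $C^{(\lambda)}_{n-1}$ is also equivalent to the paper's citation of Propositions~\ref{prop:DLC} and~\ref{prop:zero3}.
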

\begin{proof}[Proof of Corollary~\ref{cor:DQG}]
By definition,
\begin{align*}
	C_{n;c}^{(\lambda)}(x)
	&= \frac{(2\lambda)_n}{(\lambda+\frac{1}{2})_n}
	P^{(\lambda-1/2,\lambda-1/2)}_n(x)
	+ c \frac{(2\lambda)_{n-1}}{(\lambda+\frac{1}{2})_{n-1}}
	P^{(\lambda-1/2,\lambda-1/2)}_{n-1}(x) \\
	&= \frac{(2\lambda)_n}{(\lambda+\frac{1}{2})_n}
	P^{(\lambda-1/2,\lambda-1/2)}_{n;c'}(x),
\end{align*}
where
\[
	c' = c \frac{\lambda+n-\frac{1}{2}}{2\lambda+n-1}.
\]
By Proposition~\ref{prop:DS4.3} and Theorem~\ref{thm:DQJ},
\begin{align*}
	\disc(C_{n;c}^{(\lambda)})
	&= \left(\frac{(2\lambda)_n}{(\lambda+\frac{1}{2})_n}\right)^{2(n-1)}
	\frac{(2n+2\lambda-1)^{2n-1}}{2^{n(n-1)}}
	\prod_{k=1}^n k^{k-2n+3} \\
	&\quad \cdot \prod_{k=1}^{n-1} \left(k+\lambda-\frac{1}{2}\right)^{2k-2}
	(n+k+2\lambda-1)^{n-k-1} \\
	&\quad \cdot \frac{(-c')^n
	P^{(\lambda-1/2,\lambda-1/2)}_{n;c'}
	\left(-\frac{2 n (n + 2\lambda - 1) (c')^2
	+ 2 (n + \lambda - 1/2)^2}{(2 n + 2\lambda - 1)^2 c'}\right)}
	{(n+\lambda-\frac{1}{2}+c'n)(n+\lambda-\frac{1}{2}-c'n)} \\
	&= \left(\frac{(2\lambda)_n}{(\lambda+\frac{1}{2})_n}\right)^{2n-3}
	\frac{(2n+2\lambda-1)^{2n-1}}{2^{n(n-1)}}
	\prod_{k=1}^n k^{k-2n+3} \\
	&\quad \cdot \prod_{k=1}^{n-1} \left(k+\lambda-\frac{1}{2}\right)^{2k-2}
	(n+k+2\lambda-1)^{n-k-1} \\
	&\quad \cdot \frac{(n+\lambda-\frac{1}{2})^{n-2}
	(-c)^n C^{(\lambda)}_{n;c}
	\left(-\frac{nc^2+n+2\lambda-1}{(2n+2\lambda-1)c}\right)}
	{(n+2\lambda-1)^{n-2} (n+2\lambda-1+cn)(n+2\lambda-1-cn)} \\
	&= \frac{(2n+2\lambda-1)^n}{2^{(n-1)^2} (n+2\lambda-1)^{n-2}}
	\prod_{k=1}^n k^{k-2n+3} (k+2\lambda-1)^{2n-3} \\
	&\quad \cdot \prod_{k=1}^{n-1}
	\left(k+\lambda-\frac{1}{2}\right)^{2k-2n+1}
	(n+k+2\lambda-1)^{n-k-1} \\
	&\quad \cdot \frac{(-c)^n C^{(\lambda)}_{n;c}
	\left(-\frac{nc^2+n+2\lambda-1}{(2n+2\lambda-1)c}\right)}
	{(n+2\lambda-1)^2-(cn)^2}.
\end{align*}
The constant factor is computed as follows:
\begin{align*}
	&\frac{(2n+2\lambda-1)^n}{2^{(n-1)^2} (n+2\lambda-1)^{n-2}}
	\prod_{k=1}^n k^{k-2n+3} (k+2\lambda-1)^{2n-3} \\
	&\quad \cdot \prod_{k=1}^{n-1}
	\left(k+\lambda-\frac{1}{2}\right)^{2k-2n+1}
	(n+k+2\lambda-1)^{n-k-1} \displaybreak[0]\\
	&= \frac{(2n+2\lambda-1)^n}{(n+2\lambda-1)^{n-2}}
	\prod_{k=1}^n k^{k-2n+3} (k+2\lambda-1)^{2n-3} \\
	&\quad \cdot \prod_{k=1}^{n-1}
	\left(2k+2\lambda-1\right)^{2k-2n+1}
	\cdot \prod_{k=n+1}^{2n-1} (k+2\lambda-1)^{2n-k-1} \displaybreak[0]\\
	&= \frac{(2n+2\lambda-1)^n}{(n+2\lambda-1)^{n-2}}
	\prod_{k=1}^n k^{k-2n+3} (k+2\lambda-1)^{2n-3} \\
	&\quad \cdot \prod_{k=1}^{2n-1}
	\left(k+2\lambda-1\right)^{k-2n+1}
	\cdot \prod_{k=1}^n (2k-1+2\lambda-1)^{2n-2k} \\
	&\quad \cdot \prod_{k=1}^{2n-1} (k+2\lambda-1)^{2n-k-1}
	\cdot \prod_{k=1}^n (k+2\lambda-1)^{k-2n+1} \displaybreak[0]\\
	&= 2^{n(n-1)} (2n+2\lambda-1)^n
	\prod_{k=1}^n k^{k-2n+3} (k+\lambda-1)^{2n-2k} \cdot
	\prod_{k=1}^{n-1} (k+2\lambda-1)^{k-2}.
\end{align*}

The latter part of the corollary follows
from Propositions~\ref{prop:DLC} and~\ref{prop:zero3}.
\end{proof}

We also describe another specialization of Theorem~\ref{thm:DQJ}.

\begin{corollary}\label{cor:DQC}
Let $c$ be a constant and let
$T_{n;c}(x)=T_n(x)+c T_{n-1}(x)$ and
$U_{n;c}(x)=U_n(x)+c U_{n-1}(x)$.
Then we have
\begin{equation}\label{eq:DC1}
	\disc(T_{n;c})
	= \frac{2^{(n-1)(n-2)} (2n-1)^n (-c)^n}{1-c^2}
	T_{n;c}\left(-\frac{(n-1)c^2+n}{(2n-1)c}\right).
\end{equation}
\begin{equation}\label{eq:DC2}
	\disc(U_{n;c})
	= \frac{2^{n(n-1)} (2n+1)^n (-c)^n }
	{(n+1)^2-(cn)^2}
	U_{n;c}\left(-\frac{nc^2+n+1}{(2n+1)c}\right).
\end{equation}
Furthermore, $\disc(T_{n;c})$ and $\disc(U_{n;c})$
are even polynomials in $c$ of degree $2(n-1)$.
\end{corollary}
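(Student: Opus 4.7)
My plan is to derive both identities as specializations of Corollary~\ref{cor:DQG}, exploiting $U_n = C^{(1)}_n$ from \eqref{eq:Chebyshev_second} and $T_n = \lim_{\lambda\to 0}(n/(2\lambda))C^{(\lambda)}_n$ from \eqref{eq:C1G}. For \eqref{eq:DC2}, I would simply plug $\lambda = 1$ into \eqref{eq:G-compact}: the factors $(k+\lambda-1)^{2n-2k}$ and $(k+2\lambda-1)^{k-2}$ become $k^{2n-2k}$ and $(k+1)^{k-2}$, and combining them with $\prod_{k=1}^{n} k^{k-2n+3}$ (re-indexing $j = k+1$ in the second product) yields a telescoping collapse to $1$ via $k^{3-k}\cdot k^{k-3}=1$ for $k\ge 2$ (with the $k=1$ term giving $1^2 = 1$). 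What survives is exactly \eqref{eq:DC2}.

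For \eqref{eq:DC1}, I set $\tilde{c} = c(n-1)/n$. A direct check gives $nC^{(\lambda)}_{n;\tilde{c}} = nC^{(\lambda)}_n + c(n-1)C^{(\lambda)}_{n-1}$, which combined with \eqref{eq:C1G} produces $T_{n;c}(x) = \lim_{\lambda \to 0}(n/(2\lambda))C^{(\lambda)}_{n;\tilde{c}}(x)$. Proposition~\ref{prop:DS4.3} then yields
\[
\disc(T_{n;c}) = \lim_{\lambda\to 0}\left(\frac{n}{2\lambda}\right)^{2n-2}\disc(C^{(\lambda)}_{n;\tilde{c}}),
\]
into which I plug \eqref{eq:G-compact} and track leading-order behavior in $\lambda$. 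Three sources of $\lambda$-degeneration appear: the $k=1$ term of $\prod_{k}(k+\lambda-1)^{2n-2k}$ contributes $\lambda^{2n-2}$, the $k=1$ term of $\prod_{k}(k+2\lambda-1)^{k-2}$ contributes $(2\lambda)^{-1}$, and $C^{(\lambda)}_{n;\tilde{c}}(\xi^{(\lambda)}) \sim (2\lambda/n)\,T_{n;c}(\xi^{(0)})$ contributes $(2\lambda/n)$, where $\xi^{(0)} = -((n-1)c^2 + n)/((2n-1)c)$ is obtained by direct substitution of $\tilde{c}$ into the Gegenbauer argument. These $\lambda$-factors exactly cancel the prefactor $(n/(2\lambda))^{2n-2}$; the denominator $(n+2\lambda-1)^2 - (\tilde{c}n)^2 \to (n-1)^2(1-c^2)$ absorbs the $(n-1)^n$ from $(-\tilde{c})^n = (-c)^n(n-1)^n/n^n$; and the residual combinatorial product telescopes to give \eqref{eq:DC1}.

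Finally, evenness and the degree claim both follow from Proposition~\ref{prop:DLC}. The standard parities $T_n(-x) = (-1)^n T_n(x)$ and $U_n(-x) = (-1)^n U_n(x)$ place $(T_n, T_{n-1})$ and $(U_n, U_{n-1})$ into the hypotheses of Proposition~\ref{prop:DLC}(ii), yielding evenness in $c$; simplicity of the zeros of the orthogonal polynomials $T_{n-1}$ and $U_{n-1}$ on $[-1,1]$ triggers the equality case of Proposition~\ref{prop:DLC}(i) to give degree exactly $2(n-1)$. The main obstacle is the bookkeeping in the $T_n$ limit: confirming that the three sources of $\lambda$-scaling and the residual numerical product in $n$, $n-1$, and the telescoping indices conspire to leave precisely $2^{(n-1)(n-2)}(2n-1)^n$ in front with no stray factor.
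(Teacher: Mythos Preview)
Your proposal is correct and follows essentially the same route as the paper: specialize Corollary~\ref{cor:DQG} at $\lambda=1$ for $U_{n;c}$, and for $T_{n;c}$ pass to the limit $\lambda\to 0$ via \eqref{eq:C1G} with the rescaled parameter $\tilde c=c(n-1)/n$, using Proposition~\ref{prop:DS4.3} for the scaling and tracking the three $\lambda$-degenerations exactly as the paper does. The final evenness and degree claims likewise come from Proposition~\ref{prop:DLC} (the paper cites Proposition~\ref{prop:zero3} for the simplicity of the zeros, which amounts to the same thing).
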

\begin{proof}[Proof of Corollary~\ref{cor:DQC}]
We first consider $\disc(T_{n;c})$.
When $n=1$, it is easy to verify \eqref{eq:DC1}.
We assume that $n\ge 2$. By \eqref{eq:C1G},
\[
	T_{n;c}(x)
	= \lim_{\lambda\to 0} \frac{n}{2\lambda} C^{(\lambda)}_n(x)
	+ c \lim_{\lambda\to 0} \frac{n-1}{2\lambda} C^{(\lambda)}_{n-1}(x) 
        = \lim_{\lambda\to 0} \frac{n}{2\lambda} C^{(\lambda)}_{n;c'}(x),
\]
where $c'=(n-1)c/n$.
By Proposition~\ref{prop:DS4.3} and Corollary~\ref{cor:DQG},
\begin{align*}
	\disc(T_{n;c}^{(\lambda)})
	&= \lim_{\lambda\to 0} \left(\frac{n}{2\lambda}\right)^{2n-2}
	2^{n(n-1)} (2n+2\lambda-1)^n
	\prod_{k=1}^n k^{k-2n+3} (k+\lambda-1)^{2n-2k} \\
	&\quad \cdot \prod_{k=1}^{n-1}
	(k+2\lambda-1)^{k-2}
	\cdot \frac{(-c')^n C^{(\lambda)}_{n;c'}
	\left(-\frac{n(c')^2+n+2\lambda-1}{(2n+2\lambda-1)c'}\right)}
	{(n+2\lambda-1)^2-(c'n)^2} \displaybreak[0]\\
	&= \lim_{\lambda\to 0}
	2^{(n-1)(n-2)} (n-1)^n n^{n-3} (2n+2\lambda-1)^n \\
	&\quad \cdot \prod_{k=2}^n k^{k-2n+3} (k+\lambda-1)^{2n-2k} \\
	&\quad \cdot \prod_{k=2}^{n-1}
	(k+2\lambda-1)^{k-2}
	\cdot \frac{(-c)^n \frac{n}{2\lambda}C^{(\lambda)}_{n;c'}
	\left(-\frac{n(c')^2+n+2\lambda-1}{(2n+2\lambda-1)c'}\right)}
	{(n+2\lambda-1)^2-c^2(n-1)^2} \displaybreak[0]\\
	&= 2^{(n-1)(n-2)} (n-1)^n n^{n-3} (2n-1)^n
	\prod_{k=2}^n k^{k-2n+3} (k-1)^{2n-2k} \\
	&\quad \cdot \prod_{k=2}^{n-1}
	(k-1)^{k-2}
	\cdot \frac{(-c)^n T_{n;c}
	\left(-\frac{(n-1)c^2+n}{(2n-1)c}\right)}
	{(n-1)^2-c^2(n-1)^2} \displaybreak[0]\\
	&= \frac{2^{(n-1)(n-2)} (2n-1)^n (-c)^n}{1-c^2}
	T_{n;c}\left(-\frac{(n-1)c^2+n}{(2n-1)c}\right).
\end{align*}

Next, we consider $\disc(U_{n;c})$.
Since $U_{n;c}(x)=C^{(1)}_{n;c}(x)$,
by Corollary~\ref{cor:DQG},
\begin{align*}
	\disc(U_{n;c})
	&= 2^{n(n-1)} (2n+1)^n
	\prod_{k=1}^n k^{3-k} \\
	&\quad \cdot \prod_{k=1}^{n-1}
	(k+1)^{k-2}
	\cdot \frac{(-c)^n U_{n;c}
	\left(-\frac{nc^2+n+1}{(2n+1)c}\right)}
	{(n+1)^2-(cn)^2}. \\
	&= \frac{2^{n(n-1)} (2n+1)^n (-c)^n }
	{(n+1)^2-(cn)^2}
	U_{n;c}\left(-\frac{nc^2+n+1}{(2n+1)c}\right). \qedhere
\end{align*}
\end{proof}

\begin{remark}
Dilcher and Stolarsky~\cite[Theorem~4]{DS05} derived the compact formula \eqref{eq:DC2}
by using algebraic properties of resultants and the Euclidean algorithm.
\end{remark}

\subsection{Quasi-Laguerre and quasi-Hermite polynomials} \label{subsec:QL-QH}

In this subsection we derive explicit formulas for the discriminants of quasi-Laguerre and quasi-Hermite polynomials, respectively.

\begin{theorem}\label{thm:DQL}
Let $c$ be a constant and let
$L_{n;c}^{(\alpha)}(x)=L_n^{(\alpha)}(x)+cL_{n-1}^{(\alpha)}(x)$.
Then
\begin{equation} \label{eq:L-compact}
\begin{gathered}
	\disc(L_{n;c}^{(\alpha)})
	= \frac{1}{n+\alpha+cn} \prod_{k=1}^n k^{k-2n+3}
	\prod_{k=1}^{n-1} (k+\alpha)^{k-1} \\
	\quad \cdot (-c)^n L_{n;c}^{(\alpha)}
	\left(\frac{n c^2+(2n+\alpha)c+n+\alpha}{c}\right).
\end{gathered}
\end{equation}
Furthermore, $\disc(L_{n;c}^{(\alpha)})$ is a polynomial in $c$ of degree $2(n-1)$.
\end{theorem}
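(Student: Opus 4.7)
The plan is to specialize Theorem~\ref{thm:D-general} to the Laguerre case with $\rho(x)=x$. The first step is to record the two derivative identities needed in \eqref{eq:derivative}. The standard formula
\[
  x\,\frac{d}{dx}L_n^{(\alpha)}(x) = n\, L_n^{(\alpha)}(x) - (n+\alpha)\, L_{n-1}^{(\alpha)}(x)
\]
gives $A_n=0$, $B_n=n$, $C_n=-(n+\alpha)$. Applying the same identity at index $n-1$ produces a $L_{n-2}^{(\alpha)}$ term, which I would eliminate using the three-term recurrence for Laguerre polynomials to arrive at
\[
  x\,\frac{d}{dx}L_{n-1}^{(\alpha)}(x) = (x-n-\alpha)\, L_{n-1}^{(\alpha)}(x) + n\, L_n^{(\alpha)}(x),
\]
so $D_n=1$, $E_n=-(n+\alpha)$, $F_n=n$. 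In particular $D_n-A_n=1$, and the quantity $\xi_{n;c}$ appearing in Theorem~\ref{thm:D-general} simplifies to $(nc^2+(2n+\alpha)c+n+\alpha)/c$, matching the argument in the stated formula.

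Next I would assemble the remaining ingredients of \eqref{eq:D-general}. The leading coefficient is $l_n=(-1)^n/n!$, and using $L_m^{(\alpha)}(0)=\binom{m+\alpha}{m}$, the formula \eqref{eq:resultant} gives
\[
  \Res(L_{n;c}^{(\alpha)},x) = (-1)^n L_{n;c}^{(\alpha)}(0) = \frac{(-1)^n(n+\alpha+cn)}{n!}\prod_{k=1}^{n-1}(k+\alpha).
\]
The other needed resultant $\Res(L_n^{(\alpha)},L_{n-1}^{(\alpha)})$ is the $t=0$ specialization of Theorem~\ref{thm:res-QL}, equivalently \eqref{eq:res-general-t0} with $a_k=-1/k$ and $c_k=(k+\alpha-1)/k$; after the reindexing $\prod_{k=1}^n(k+\alpha-1)^{k-1}=\prod_{k=1}^{n-1}(k+\alpha)^k$, it equals
\[
  (-1)^{n(n-1)/2}\prod_{k=1}^n k^{k-2n+1}\prod_{k=1}^{n-1}(k+\alpha)^k.
\]
Substituting everything into \eqref{eq:D-general} and using the identities $(-1)^{n(n+1)/2+n(n-1)/2}=(-1)^{n^2}=(-1)^n$ and $(n!)^2\prod_{k=1}^n k^{k-2n+1}=\prod_{k=1}^n k^{k-2n+3}$ collapses all the scalar factors into the compact form of \eqref{eq:L-compact}.

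For the degree claim I would invoke Proposition~\ref{prop:DLC}(i): the discriminant has degree at most $2(n-1)$ in $c$, with equality precisely when $L_{n-1}^{(\alpha)}$ has simple zeros, a property guaranteed by Proposition~\ref{prop:zero3} (applied to the Laguerre sequence with $b_1=0$). The main obstacle is purely bookkeeping: tracking the sign prefactor, the factorials arising from $l_n$ and $L_{n;c}^{(\alpha)}(0)$, and the two product families $\prod k^{\cdot}$ and $\prod(k+\alpha)^{\cdot}$ coming from the two resultants, then verifying that they fuse into the stated compact product. Structurally the argument is a direct imitation of the quasi-Jacobi case (Theorem~\ref{thm:DQJ}), so no conceptually new idea is required.
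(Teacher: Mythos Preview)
Your proposal is correct and follows essentially the same route as the paper: specialize Theorem~\ref{thm:D-general} with $\rho(x)=x$, read off the constants from the two Laguerre derivative identities, compute $\Res(L_{n;c}^{(\alpha)},x)$ via $L_m^{(\alpha)}(0)=\binom{m+\alpha}{m}$, take $\Res(L_n^{(\alpha)},L_{n-1}^{(\alpha)})$ from the $t=0$ case of Theorem~\ref{thm:res-QL}, and simplify. Your treatment of the degree claim via Propositions~\ref{prop:DLC} and~\ref{prop:zero3} is exactly the right justification (and in fact spells out a step the paper's own proof leaves implicit).
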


\begin{proof}[Proof of Theorem~\ref{thm:DQL}]
By \eqref{eq:L-rec} and \eqref{eq:DEL1},
$L_n^{(\alpha)}$ and $L_{n-1}^{(\alpha)}$ satisfies
\eqref{eq:derivative} for $\rho(x)=x$ and suitable constants.
In fact, we have
\begin{align*}
	x \frac{d}{dx} L_n^{(\alpha)}(x)
	&= n L_n^{(\alpha)}(x) - (n + \alpha) L_{n-1}^{(\alpha)}(x), \\
	x \frac{d}{dx} L_{n-1}^{(\alpha)}(x)
	&= (x - n - \alpha) L_{n-1}^{(\alpha)}(x) + n L_n^{(\alpha)}(x).
\end{align*}
Hence we have
\begin{equation}\label{eq:DQL1}
	D_n - A_n = 1, \quad
	\xi_{n;c} = \frac{n c^2 + (2 n + \alpha) c + n + \alpha}{c}.
\end{equation}

By \eqref{eq:resultant} and \eqref{eq:L-explicit},
\begin{multline}\label{eq:DQL2}
	\Res(L_{n;c}^{(\alpha)}, \rho) = (-1)^n L_{n;c}^{(\alpha)}(0) \\
	= (-1)^n \left(\binom{n+\alpha}{n}
	+ c \binom{n+\alpha-1}{n-1}\right)
	= \frac{(-1)^n(n+\alpha+cn)}{n!}\prod_{k=1}^{n-1}(k+\alpha).
\end{multline}
Let $l_n$ be the leading coefficient of $L_n^{(\alpha)}$.
By Theorem~\ref{thm:res-QL},
\begin{equation}\label{eq:DQL3}
	\Res(L^{(\alpha)}_n, L^{(\alpha)}_{n-1})
	= (-1)^{n(n+1)/2} l_n
	\prod_{k=1}^{n} k^{k-2n+2}
	\prod_{k=1}^{n-1} (k+\alpha)^k.
\end{equation}

Therefore, by Theorem~\ref{thm:D-general},
\eqref{eq:DQL1}, \eqref{eq:DQL2} and \eqref{eq:DQL3},
\begin{align*}
	\disc(L_n^{(\alpha)})
	&= \frac{(-1)^{n(n+1)/2} (D_n-A_n)^n c^n}
	{l_n\Res(L_{n;c}^{(\alpha)},\rho)}
	\Res(L_n^{(\alpha)},L_{n-1}^{(\alpha)})
	L_n^{(\alpha)}(\xi_{n;c}) \\
	&= \frac{(-c)^n}{n+\alpha+cn}
	\prod_{k=1}^n k^{k-2n+3} \prod_{k=1}^{n-1} (k+\alpha)^{k-1}
	\cdot L_n^{(\alpha)}(\xi_{n;c}). \qedhere
\end{align*}
\end{proof}

\begin{remark}
Taking the limit as $c\to 0$, we have
\[
	\disc(L_n^{(\alpha)}) = \prod_{k=1}^n k^{k-2n+2} (k+\alpha)^{k-1}.
\]
This formula coincides with Stieltjes's formula~\cite[(6.71.6)]{Sze}.
\end{remark}

Next, we derive an explicit formula for the discriminants of quasi-Hermite polynomials.

\begin{theorem}\label{thm:DQH}
Let $c$ be a constant and let $H_{n;c}(x)=H_n(x)+cH_{n-1}(x)$.
Then
\begin{equation}
\label{eq:H-compact}
	\disc(H_{n;c}) = 2^{n(3n-5)/2} \prod_{k=1}^{n-1} k^k
	\cdot (-c)^n H_{n;c}\left(-\frac{c^2+2n}{2c}\right).
\end{equation}
Furthermore, $\disc(H_{n;c})$ is an even polynomial in $c$ of degree $2(n-1)$.
\end{theorem}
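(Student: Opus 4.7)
The plan is to specialize Theorem~\ref{thm:D-general} to the Hermite case with the trivial choice $\rho(x)=1$. First I would verify the differentiation relations \eqref{eq:derivative}. From the standard identity $H_n'(x)=2nH_{n-1}(x)$ (immediate from Rodrigues' formula \eqref{eq:Rod-H}), we read off $A_n=B_n=0$ and $C_n=2n$. To obtain the second line, I would combine $H_{n-1}'(x)=2(n-1)H_{n-2}(x)$ with the three-term recursion $H_n(x)=2xH_{n-1}(x)-2(n-1)H_{n-2}(x)$ to eliminate $H_{n-2}$, yielding $H_{n-1}'(x)=2xH_{n-1}(x)-H_n(x)$, so $D_n=2$, $E_n=0$, $F_n=-1$. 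In particular,
\[
D_n-A_n=2,\qquad \xi_{n;c}=\frac{F_nc^2+(B_n-E_n)c-C_n}{(D_n-A_n)c}=-\frac{c^2+2n}{2c},
\]
which matches the evaluation point appearing in \eqref{eq:H-compact}.

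Next I would substitute into formula \eqref{eq:D-general}. Since $\rho\equiv 1$, we have $\deg\rho=0$ and $\Res(H_{n;c},\rho)=1$, while the leading coefficient of $H_n$ is $l_n=2^n$. The one remaining ingredient is $\Res(H_n,H_{n-1})$, which I would extract from \eqref{eq:res-general-t0} using the Hermite parameters $a_m=2$, $b_m=0$, $c_m=2(m-1)$ (the same data used in the proof of Theorem~\ref{thm:res-QH}). A short bookkeeping of $\prod_{k=1}^n a_k^{2n-2k}c_k^{k-1}$, reindexing $j=k-1$ to absorb the harmless $0^0$ term at $k=1$, gives
\[
\Res(H_n,H_{n-1})=(-1)^{n(n-1)/2}\,2^{3n(n-1)/2}\prod_{k=1}^{n-1}k^k.
\]

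Assembling the factors in \eqref{eq:D-general}, the signs collapse via $(-1)^{n(n+1)/2}(-1)^{n(n-1)/2}=(-1)^{n^2}=(-1)^n$, which gets absorbed into the $(-c)^n$, while the powers of two combine as $2^n\cdot 2^{-2n}\cdot 2^{3n(n-1)/2}=2^{n(3n-5)/2}$. This yields \eqref{eq:H-compact} on the nose.

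Finally, the parity and degree assertions follow from Proposition~\ref{prop:DLC}: part~(ii) applies because $H_n(-x)=(-1)^nH_n(x)$ and $H_{n-1}(-x)=(-1)^{n-1}H_{n-1}(x)$, giving evenness in $c$; part~(i) delivers the sharp degree $2(n-1)$, since $H_{n-1}$ is an orthogonal polynomial and hence has simple zeros by Proposition~\ref{prop:zero3} (order $0$ case). I anticipate no serious obstacle; the only delicate point is the careful sign and $2$-power accounting in the product $\prod_{k=1}^n 2^{2n-2k}(2(k-1))^{k-1}$.
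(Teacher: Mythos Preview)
Your proposal is correct and follows essentially the same route as the paper: you specialize Theorem~\ref{thm:D-general} with $\rho\equiv 1$, read off $A_n,\dots,F_n$ from $H_n'=2nH_{n-1}$ and the recursion, compute $\Res(H_n,H_{n-1})$ from \eqref{eq:res-general-t0} (the paper quotes this via Theorem~\ref{thm:res-QH}, which is the same calculation), and then collapse the signs and powers of~$2$ exactly as the paper does. The parity and degree statements are likewise obtained in the paper from Propositions~\ref{prop:DLC} and~\ref{prop:zero3}, just as you indicate.
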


We give a proof by using the limiting property (cf.~\cite[(5.6.3)]{Sze}) that
\begin{equation}
\label{eq:HG}
	\frac{H_n(x)}{n!} = \lim_{\lambda\to\infty} \lambda^{-n/2}
	C_n^{(\lambda)}(\lambda^{-1/2} x).
\end{equation}

\begin{proof}[Proof of Theorem~\ref{thm:DQH}]
By \eqref{eq:H-rec} and \eqref{eq:DEH1},
$H_n$ and $H_{n-1}$ satisfies
\eqref{eq:derivative} for $\rho(x)=1$ and suitable constants.
In fact, we have
\begin{align*}
	H'_n(x) &= 2 n H_{n-1}(x), \\
	H'_{n-1}(x) &= 2 x H_{n-1}(x) - H_n(x).
\end{align*}
Hence we have
\begin{equation}\label{eq:DQH1}
	D_n - A_n = 2, \quad
	\xi_{n;c} = -\frac{c^2+2n}{2c}.
\end{equation}

Let $l_n$ be the leading coefficient of $H_n$.
By \eqref{eq:H-explicit} and \eqref{eq:resultant},
\begin{equation}\label{eq:DQH2}
	l_n = 2^n, \quad
	\Res(H_{n;c}, \rho) = 1.
\end{equation}
By Theorem~\ref{thm:res-QH},
\begin{equation}\label{eq:DQH3}
	\Res(H_n, H_{n-1})
	= (-1)^{n(n-1)/2} 2^{n(3n-5)/2} l_n
	\prod_{k=1}^{n-1} k^k.
\end{equation}

Therefore, by Theorem~\ref{thm:D-general},
\eqref{eq:DQH1}, \eqref{eq:DQH2} and \eqref{eq:DQH3},
\begin{align*}
	\disc(H_n)
	&= \frac{(-1)^{n(n+1)/2} (D_n-A_n)^n c^n}
	{l_n^2\Res(H_{n;c},\rho)}
	\Res(H_n,H_{n-1})
	H_n(\xi_{n;c}) \\
	&= 2^{n(3n-5)/2} \prod_{k=1}^{n-1} k^k
	\cdot (-c)^n H_n(\xi_{n;c}).
\end{align*}

The latter part of the theorem follows
from Propositions~\ref{prop:DLC} and~\ref{prop:zero3}.
\end{proof}

\begin{remark}
\label{rem:Hilbert}
Taking the limit as $c\to 0$, we have
\[
	\disc(H_n) = 2^{3n(n-1)/2} \prod_{k=1}^n k^k.
\]
This formula coincides with Hilbert's formula~\cite[(6.71.7)]{Sze}.
\end{remark}

\begin{remark}
Laguerre polynomials are expressed as a limit case of
Jacobi polynomials (see \cite[(5.3.4)]{Sze}):
\[
	L_n^{(\alpha)}(x) = \lim_{\beta\to\infty}
	P_n^{(\alpha,\beta)}(1 - 2 \beta^{-1} x).
\]
Similarly, Hermite polynomials are expressed as a limit case of
Gegenbauer polynomials or Laguerre polynomials
(see \cite[(5.6.3) and p.~389, Problem~80]{Sze}):
\begin{gather*}
	\frac{H_n(x)}{n!} = \lim_{\lambda\to\infty} \lambda^{-n/2}
	C_n^{(\lambda)}(\lambda^{-1/2} x), \\
	\lim_{\alpha\to\infty} \alpha^{-n/2}
	L_n^{(\alpha)}(\alpha^{1/2} x + \alpha)
	= (-1)^n 2^{-n/2} (n!)^{-1} H_n(2^{-1/2} x).
\end{gather*}
By these relations, together with Proposition~\ref{prop:DS4.3},
we can give alternative proofs of Theorems~{\rm \ref{thm:DQL}} and \ref{thm:DQH}.
\end{remark}

\section{Number-theoretic applications} \label{sec:applications}

In this section we give a generalization of Hausdorff's equations (\ref{eq:hausdorff-equation1})
and then, in some specific cases, examine solutions for such equations.
We use the explicit formula for discriminants of quasi-Hermite polynomials given in Theorem~\ref{thm:DQH}.

Throughout this section, let
\begin{equation}
\label{eq:gauss-moment-1}
a_k = \frac{1}{\sqrt{\pi}} \int_{-\infty}^\infty t^k \ e^{-t^2}dt, \qquad k=0,1,\ldots
\end{equation}
It is then obvious that
\begin{equation}
\label{eq:gauss-moment-2}
a_{2k} = \frac{(2k)!}{2^{2k} k!}, \qquad a_{2k+1} = 0.
\end{equation}

\subsection{Hausdorff-type equations} \label{subsec:HDeq}

The following is a generalization of the equations \eqref{eq:hausdorff-equation1}:

\begin{problem}
[Hausdorff-type equations]
\label{prob:haus-eq-1}
Let $m>0$ and $n \ge 0$ be integers. Do the Diophantine equations
\begin{equation}
\label{eq:haus-eq-1}
\left\{\begin{array}{ccccccccc}
x_1       & + & x_2       & + & \cdots & + & x_m        & = & a_0 \\
x_1y_1    & + & x_2y_2    & + & \cdots & + & x_m y_m    & = & a_1 \\
&&&&&&&& \vdots \\
x_1y_1^n  & + & x_2y_2^n  & + & \cdots & + & x_m y_m^n  & = & a_n \\
\end{array} \right.
\end{equation}
have a solution $(x_1, \ldots, x_m, y_1, \ldots, y_m) \in \qq^{2m}$?
\end{problem}

The following proposition makes the relationship of Problem~\ref{prob:haus-eq-1} to
quadrature formulas for Gaussian integration:

\begin{proposition}
\label{prop:haus-equiv-1}
The following are equivalent:
\begin{enumerate}
\item[{\rm (i)}] The equations {\rm (\ref{eq:haus-eq-1})} have a solution $(x_1,\ldots,x_m,y_1,\ldots,y_m) \in \qq^{2m}$;
\item[{\rm (ii)}] The formula
\begin{equation}
\label{eq:haus-qf-1}
   \frac{1}{\sqrt{\pi}} \int_{-\infty}^\infty f(t) \ e^{-t^2}dt = \sum_{i=1}^m x_i f(y_i)
\end{equation}
is a rational quadrature of degree $n$.
\end{enumerate}
\end{proposition}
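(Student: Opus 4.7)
The plan is to prove the two directions of the equivalence by unpacking the definition of a degree-$n$ quadrature and using the linearity of both sides of \eqref{eq:haus-qf-1} in the integrand.

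For the direction (ii) $\Rightarrow$ (i), I would specialize the test function $f$ in \eqref{eq:haus-qf-1} to the monomials $f(t)=t^j$ for $j=0,1,\ldots,n$. Each such monomial is a polynomial of degree at most $n$, so by assumption \eqref{eq:haus-qf-1} holds, giving
\[
\sum_{i=1}^m x_i y_i^j \;=\; \frac{1}{\sqrt{\pi}}\int_{-\infty}^{\infty} t^j\,e^{-t^2}\,dt \;=\; a_j,
\]
which is precisely the $j$-th equation of \eqref{eq:haus-eq-1}. Since a rational quadrature has, by definition, rational nodes and weights, $(x_1,\ldots,x_m,y_1,\ldots,y_m)\in\qq^{2m}$.

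For the converse (i) $\Rightarrow$ (ii), assume $(x_i,y_i)\in\qq^{2m}$ satisfy the $n+1$ equations \eqref{eq:haus-eq-1}. Any polynomial $f$ of degree at most $n$ can be written as $f(t)=\sum_{j=0}^{n} c_j t^j$ for some coefficients $c_j$. Both sides of \eqref{eq:haus-qf-1} are linear in $f$, so
\[
\sum_{i=1}^m x_i f(y_i) \;=\; \sum_{j=0}^{n} c_j \sum_{i=1}^m x_i y_i^j \;=\; \sum_{j=0}^{n} c_j a_j \;=\; \frac{1}{\sqrt{\pi}}\int_{-\infty}^{\infty} f(t)\,e^{-t^2}\,dt,
\]
where the middle equality uses the hypothesis and the last equality uses the definition \eqref{eq:gauss-moment-1} of $a_j$ together with linearity of the integral. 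Hence \eqref{eq:haus-qf-1} is a quadrature formula of degree $n$, and it is rational because the $x_i,y_i$ were rational.

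There is no real obstacle here: the proposition is essentially the tautology that a linear functional agreeing on a basis of $\rr[t]_{\le n}$ agrees on the whole space, combined with the observation that the moments of the Gaussian weight are exactly the $a_j$. The only care needed is to note that the monomials $\{1,t,\ldots,t^n\}$ form a basis of the space of polynomials of degree at most $n$, so that the two implications are genuine biconditionals at the level of bases.
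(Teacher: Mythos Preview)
Your proof is correct and follows the same approach as the paper: the paper's proof is the one-line remark that $1,x,\dotsc,x^n$ form a basis of the space of polynomials of degree at most $n$, and you have simply unpacked this linearity argument explicitly in both directions.
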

\begin{proof}[Proof of Proposition~\ref{prop:haus-equiv-1}]
We remark that $1,x,x^2,\ldots,x^n$ form a basis of the vector space of all polynomials of degree at most $n$.
\end{proof}

The following proposition gives a slight generalization of
the {\it Stroud bound} for positive quadrature formulas~\cite{S71} (see also~\cite[p.465]{Sho37}) or
\emph{Fisher-type inequality} for Gaussian designs~\cite{BB04}:

\begin{proposition}
\label{prop:haus-bound-1}
If there exists a rational solution of (\ref{eq:haus-eq-1}), then $n \le 2m-1$.
\end{proposition}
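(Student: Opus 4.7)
The plan is to argue by contradiction using the standard test-function trick behind the Stroud bound. By Proposition~\ref{prop:haus-equiv-1}, a rational solution of~\eqref{eq:haus-eq-1} yields a quadrature formula
\[
	\frac{1}{\sqrt{\pi}} \int_{-\infty}^\infty f(t)\, e^{-t^2}\,dt = \sum_{i=1}^m x_i f(y_i)
\]
valid for every polynomial $f$ of degree at most $n$. Suppose, for contradiction, that $n \ge 2m$.

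The key step is to choose the witness polynomial $\omega(x) = \prod_{i=1}^m (x - y_i)$ built from the nodes and then test the formula on $f(x) = \omega(x)^2$. Since $\deg f = 2m \le n$, the formula applies, and the right-hand side equals $\sum_{i=1}^m x_i \omega(y_i)^2 = 0$ because $\omega(y_i) = 0$ for each $i$, regardless of the sign of the weights $x_i$. On the other hand, $\omega(t)^2 \ge 0$ on $\mathbb{R}$ and $\omega$ is a nonzero polynomial, so $\omega(t)^2 e^{-t^2}$ is a nonnegative continuous function that is not identically zero, hence the left-hand side is strictly positive. This contradicts $0 = \sum_{i=1}^m x_i \omega(y_i)^2$, so we must have $n \le 2m - 1$.

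One small bookkeeping issue is that the $y_i$ need not be distinct a priori; however this causes no trouble because $\omega(y_i) = 0$ for every $i$ whether or not there are repetitions, so the right-hand side still vanishes. Equivalently, one could collapse repeated nodes by summing their weights to reduce to a formula with $m' \le m$ distinct nodes and apply the same argument with $\omega(x) = \prod_{j=1}^{m'}(x - y_j)$ of degree $2m' \le 2m \le n$. The argument is short and does not use any hypothesis of positivity of the weights $x_i$; the only obstacle one must be careful about is this repeated-nodes point, and it is handled as above.
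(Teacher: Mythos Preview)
Your proof is correct and follows essentially the same approach as the paper's own proof: assume $n\ge 2m$, take a polynomial vanishing at all the nodes, square it to obtain a nonnegative test function of degree at most $n$, and derive the contradiction $0<\int f^2\,e^{-t^2}\,dt=\sum x_i f(y_i)^2=0$. Your version is simply more explicit about the degree count and the harmless possibility of repeated nodes.
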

\begin{proof}[Proof of Proposition~\ref{prop:haus-bound-1}]
Suppose contrary.
Let $f$ be a polynomial which vanishes at all $y_i$.
Then
\[
0 < \frac{1}{\sqrt{\pi}} \int_{-\infty}^\infty f^2 \ e^{-t^2}dt = \sum_{i=1}^m x_i (f(y_i))^2 = 0,
\]
which is clearly a contradiction.
\end{proof}

The first pair $(m,n)$ to consider is that $n = 2m-1$.
Formulas of type (\ref{eq:haus-qf-1}) are then called {\it Gaussian quadrature} and
the nodes $y_i$ are the zeros of the Hermite polynomial $H_m$ (cf.~\cite{Sze}).
By a classical result of Schur~\cite{Sch29} (see also~\cite{S95}),
the polynomials $H_{2r}(x)$ and $H_{2r+1}(x)/x$ are irreducible over $\mathbb{Q}$.
So in this case, the equations (\ref{eq:haus-eq-1}) have no rational solutions.

The next case to consider is the
^^ almost tight' situation, namely the case when $n = 2m-2$.

The following proposition creates a relationship between the zeros of a quasi-Hermite polynomial and Eq.(\ref{eq:haus-eq-1}) for $n=2m-2$:

\begin{proposition}
\label{prop:ratio-shohat}
Assume that $n = 2m-2$.
Let $y_1,\ldots,y_m$ be distinct rationals.
The following are equivalent:
\begin{enumerate}
\item[{\rm (i)}] The equations {\rm (\ref{eq:haus-eq-1})} have
a solution $(x_1,\ldots,x_m,y_1,\ldots,y_m) \in \qq^{2m}$;
\item[{\rm (ii)}] 
There exists $c \in \qq$ such that $y_1,\ldots,y_m$ are the zeros of
the quasi-Hermite polynomial $H_{m;c}(x) = H_m(x) + cH_{m-1}(x)$.
\end{enumerate}
\end{proposition}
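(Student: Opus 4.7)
The plan is to reduce the equivalence to the Riesz-Shohat Theorem applied to the Hermite weight, and then verify rationality separately in each direction.

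First, by Proposition~\ref{prop:haus-equiv-1}, condition (i) is equivalent to the existence of $x_1,\ldots,x_m \in \qq$ such that
\[
\frac{1}{\sqrt{\pi}} \int_{-\infty}^\infty f(t)\, e^{-t^2}\,dt = \sum_{i=1}^m x_i f(y_i)
\]
for every polynomial $f$ of degree at most $2m-2$. Apply Theorem~\ref{thm:Shohat} (Riesz-Shohat) with the normalized measure $d\mu = \frac{1}{\sqrt{\pi}} e^{-t^2}\,dt$ on $\rr$, with $\Phi_j = H_j$, $n=m$, and $k=2$ (so $2n-k = 2m-2$). Over $\rr$, the quadrature above holds for all polynomials of degree at most $2m-2$ if and only if the monic polynomial $\omega_m(x) = \prod_{i=1}^m (x-y_i)$ is quasi-orthogonal of degree $m$ and order $1$, i.e.\ there exists $b_1 \in \rr$ with $\omega_m(x) = 2^{-m}\bigl(H_m(x)+b_1 H_{m-1}(x)\bigr) = 2^{-m} H_{m;b_1}(x)$; note here I use that $H_m$ has leading coefficient $2^m$. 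Moreover, in that case the weights are forced to be
\[
x_i = \gamma_i = \frac{1}{\sqrt{\pi}} \int_{-\infty}^\infty \frac{\omega_m(t)}{(t-y_i)\,\omega_m'(y_i)}\, e^{-t^2}\,dt.
\]

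For the direction (i)$\Rightarrow$(ii), suppose (i) holds. Since $y_i \in \qq$, we have $\omega_m \in \qq[x]$. Using the Riesz-Shohat equivalence, there is $b_1 \in \rr$ with $b_1 H_{m-1}(x) = 2^m \omega_m(x) - H_m(x)$; because $\omega_m$ and the Hermite polynomials all lie in $\qq[x]$ and $H_{m-1}$ has a nonzero rational leading coefficient, we conclude $b_1 \in \qq$. Taking $c := b_1$ gives (ii).

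For (ii)$\Rightarrow$(i), assume $y_1,\ldots,y_m$ are the zeros of $H_{m;c}(x)$ for some $c\in\qq$. By Proposition~\ref{prop:zero3}, these zeros are distinct (and real), so $\omega_m(x) = 2^{-m} H_{m;c}(x) \in \qq[x]$ is a quasi-orthogonal polynomial of order $1$. Define the weights $x_i := \gamma_i$ by the integral formula above. Since $\omega_m(t)/\bigl((t-y_i)\omega_m'(y_i)\bigr)$ is a polynomial in $t$ with rational coefficients and every Gaussian moment $a_k$ from \eqref{eq:gauss-moment-2} lies in $\qq$, each $x_i$ is a $\qq$-linear combination of the $a_k$ and hence rational. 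Riesz-Shohat then gives the quadrature identity for all polynomials of degree at most $2m-2$, and another application of Proposition~\ref{prop:haus-equiv-1} yields (i).

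There is no serious obstacle; the only thing to keep an eye on is the rationality bookkeeping, which is clean because the Gaussian moments $a_k$ are all rational and $\omega_m$ is monic in $\qq[x]$ in both directions of the argument.
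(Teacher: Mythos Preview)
Your proof is correct and follows exactly the same route as the paper: invoke Proposition~\ref{prop:haus-equiv-1} to translate (i) into a quadrature formula of degree $2m-2$, then apply Theorem~\ref{thm:Shohat} with $k=2$, using that the Hermite polynomials have rational coefficients. Your version simply spells out the rationality bookkeeping (for $c$ in one direction and for the weights $\gamma_i$ in the other) that the paper leaves implicit.
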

\begin{proof}[Proof of Proposition~\ref{prop:ratio-shohat}]
By (\ref{eq:H-explicit}), we remark that $H_m(x)$ is a polynomial with rational coefficients.
The result then follows by Theorem~\ref{thm:Shohat} and Proposition~\ref{prop:haus-equiv-1}.
\end{proof}

In the next subsection we prove a nonexistence theorem of solutions for $n = 2m-2$.
For this purpose, we substantially prove the nonexistence of
rational points on a certain hyperelliptic curve associated
with the discriminant ${\rm disc}(H_{m;c})$.

We work with the $2$-adic numbers $\qq_2$ rather than the rationals $\qq$.
Let $v_2\colon \qq_2^\times \to \zz$ be the normalized valuation,
where $\qq_2^\times$ is the set of units in $\qq_2$.
We use the convention that $v_2(0)=\infty$.
We denote by $\zz_2$ and $\zz_2^\times$ the ring of $2$-adic integers
and the set of units in $\zz_2$, respectively.
We remark that
\[
	\zz_2 = \{ x \in \qq_2 \mid v_2(x) \ge 0\}, \quad
	\zz_2^\times = \{ x \in \qq_2 \mid v_2(x) = 0\}.
\]

The following is used to show the main theorem in Subsection~\ref{subsec:nonexistence}
(cf.~\cite[Chapter~II, Theorem~4]{Serre}):

\begin{lemma}
\label{lem:square}
Let $x=2^n u$ be an element in $\mathbb{Q}_2^\times$ with
$n\in\mathbb{Z}$ and $u\in\mathbb{Z}_2^\times$.
For $x$ to be a square in $\mathbb{Q}_2$ it is necessary and sufficient that
$n$ is even and $u\equiv 1\pmod{8}$.
\end{lemma}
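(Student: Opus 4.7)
The lemma splits naturally into necessity and sufficiency, and both parts are standard $2$-adic manipulations. I would organize the proof as follows.

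For \textbf{necessity}, suppose $x = y^2$ with $y \in \qq_2^\times$. Writing $y = 2^k v$ with $k \in \zz$ and $v \in \zz_2^\times$, we get $x = 2^{2k} v^2$. Since the decomposition $x = 2^n u$ with $u \in \zz_2^\times$ is unique, we have $n = 2k$ (so $n$ is even) and $u = v^2$. It remains to check that any square of a $2$-adic unit is $\equiv 1 \pmod{8}$. Since $\zz_2^\times/(1+8\zz_2) \cong (\zz/8\zz)^\times = \{1,3,5,7\}$, it suffices to verify that $1^2, 3^2, 5^2, 7^2 \equiv 1 \pmod{8}$, which is a direct computation.

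For \textbf{sufficiency}, assume $n = 2k$ is even and $u \equiv 1 \pmod{8}$. It is enough to show that $u$ is a square in $\zz_2^\times$, for then $x = (2^k \sqrt{u})^2$. The natural tool is Hensel's lemma applied to the polynomial $f(T) = T^2 - u$ at the approximate root $T = 1$. Note that $f(1) = 1 - u$ satisfies $v_2(f(1)) \ge 3$, while $f'(1) = 2$ satisfies $v_2(f'(1)) = 1$. Thus $v_2(f(1)) > 2\,v_2(f'(1))$, which is precisely the hypothesis of the strong form of Hensel's lemma (the version that applies when $f'(1)$ is not a unit). The lemma produces $y \in \zz_2$ with $y^2 = u$, completing the argument.

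The only real subtlety is that one must invoke the refined form of Hensel's lemma, since the naive version requires $f'(1)$ to be a unit in $\zz_2$ and here $f'(1) = 2$ is not. Once one quotes the correct statement (e.g.\ from the reference~\cite[Chapter~II]{Serre} already cited in the excerpt), the computation $v_2(1-u) \ge 3 > 2 = 2 v_2(2)$ closes the proof immediately. Alternatively, one can avoid Hensel's lemma entirely and construct the square root by successive approximation: starting from $y_0 = 1$, define $y_{j+1} = \tfrac{1}{2}(y_j + u/y_j)$ and show by induction that $v_2(y_j^2 - u) \ge j + 3$, so $\{y_j\}$ converges $2$-adically to a square root of $u$. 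Either route is routine, so I do not anticipate any genuine obstacle.
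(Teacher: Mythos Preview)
Your argument is correct in both directions; the only minor remark is that the Newton-iteration estimate $v_2(y_j^2-u)\ge j+3$ is actually much weaker than what the recursion gives (the valuation roughly doubles at each step), but of course the weaker bound suffices for convergence.

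As for comparison with the paper: there is nothing to compare. The paper does not prove Lemma~\ref{lem:square} at all; it simply quotes the statement and cites \cite[Chapter~II, Theorem~4]{Serre}. Your write-up is therefore strictly more informative than what the paper provides, and is in fact essentially the proof one finds in Serre.
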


\subsection{Nonexistence theorem} \label{subsec:nonexistence}

The following is the main theorem in this subsection:

\begin{theorem} \label{thm:not_square}
If $n\equiv 3,4,5,6,7\pmod{8}$, then $\disc(H_{n;c})$ is not a square in
$\mathbb{Q}_2$ for any $c\in\mathbb{Q}_2$.
\end{theorem}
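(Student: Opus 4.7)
The plan is to apply Theorem~\ref{thm:DQH} in the form
\[
\disc(H_{n;c}) = 2^{n(3n-5)/2+1}\prod_{k=1}^{n-1} k^k\cdot \tilde G(c^2),
\]
where $\tilde G\in\zz[u]$ is monic of degree $n-1$ with constant term $2^{n-1}n^n$, obtained by expanding $H_n$ and $H_{n-1}$, substituting $\xi_{n;c}=-(c^2+2n)/(2c)$ into $(-c)^n H_{n;c}(\xi_{n;c})$, and factoring the leading $2$ out of the resulting polynomial $G(c^2):=(-c)^n H_{n;c}(\xi_{n;c})$. By Lemma~\ref{lem:square}, $\disc(H_{n;c})\notin(\qq_2^\times)^2$ follows if, for every $c\in\qq_2$, either $v_2(\disc(H_{n;c}))$ is odd or its unit part is not $\equiv 1\pmod 8$. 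The case $c=0$ reduces at once to Hilbert's formula $\disc(H_n)=2^{3n(n-1)/2}\prod_{k=1}^n k^k$, which fails squareness in the specified residue classes.

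I would then control the outside factor $2$-adically. The odd part of $\prod_{k=1}^{n-1}k^k$ modulo $8$ equals $\prod_{k\le n-1,\,k\text{ odd}}k\pmod 8$, since for every even $k$ the odd part of $k^k$ is the square of an odd integer and hence is $\equiv 1\pmod 8$; using $1\cdot 3\cdot 5\cdot 7\equiv 1\pmod 8$, this depends only on $n\bmod 8$ and equals $1,3,3,7,7,1$ for $n\equiv 3,4,5,6,7,0$ respectively. Likewise, the parity of the exponent $n(3n-5)/2+1+V_{n-1}$, with $V_{n-1}:=\sum_{k=1}^{n-1}k\,v_2(k)$, is odd for $n\equiv 0,3,4,7\pmod 8$ and even for $n\equiv 1,2,5,6\pmod 8$.

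The heart of the argument is a case split on $v_2(c)$, governed by the $2$-adic Newton polygon of $\tilde G$. Starting from the explicit formula
\[
g_i=\sum_j\frac{(-1)^j(n-1)!\,(n-i+j)}{j!\,(n-i-j)!\,(i-j)!}\,(2n)^{n-i-j}\qquad(1\le i\le n-1),
\]
a parity analysis yields $g_{n-1}=2$, $g_{n-2}=2(n-1)(n+6)$, and $v_2(g_i)\ge 2$ for all $i<n-1$, which pins down the top vertex of the Newton polygon of $\tilde G$. Consequently, for $v_2(c)=0$ and for $v_2(c)\le-1$ the leading monomial of $\tilde G(c^2)$ dominates and $v_2(\tilde G(c^2))$ is even, so $v_2(\disc(H_{n;c}))$ has the parity of the outside exponent; this alone yields non-squareness for $n\equiv 3,4,7\pmod 8$, and for $n\equiv 5,6\pmod 8$ one further reduces the resulting unit modulo $8$ against $3$ or $7$. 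The regime $v_2(c)\ge 1$ is handled analogously by reading off the dominant coefficient from the $g_i$-formula and tabulating its residue modulo $8$. The hardest step is this last regime, where the Newton polygon may have several successive vertices and the unit part modulo $8$ must be shown systematically incompatible with the inverse of the outside odd part for each of the five residue classes of $n$; but all required congruences reduce to elementary identities in the $g_i$-formula above combined with $c^{2j}\equiv 1\pmod 8$ for $c$ odd.
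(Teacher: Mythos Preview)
Your approach is the same as the paper's: start from the compact formula of Theorem~\ref{thm:DQH}, isolate the outside factor $2^{n(3n-5)/2}\prod_{k<n}k^k$, and then analyse the remaining polynomial $D_n(c)=(-c)^nH_{n;c}(\xi_{n;c})$ $2$-adically by splitting on $v_2(c)$ and on $n\bmod 8$. Your computations of the odd part of $\prod_{k<n}k^k\pmod 8$ and of the parity of the exponent agree with the paper's \eqref{eq:exponent}--\eqref{eq:prodk}, and your top-coefficient observation ($g_{n-1}=2$, $v_2(g_i)\ge 2$ for $i<n-1$) is exactly the paper's expansion \eqref{eq:expansion}, which settles the regime $v_2(c)\le 0$ for $n\equiv 3,4,7\pmod 8$ and reduces $n\equiv 5,6$ to a unit-residue check.

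The gap is that the regime $v_2(c)\ge 1$ is only asserted, not carried out. Knowing the top vertex of the Newton polygon of $\tilde G$ is not enough here: you need the full lower envelope, and it genuinely depends on $v_2(n)$ (for instance $v_2(g_0)=v_2(2^n n^n)$ equals $n$, $2n$, or $3n$ according as $n$ is odd, $n\equiv 6$, or $n\equiv 4\pmod 8$). In the paper's proof the cases $n\equiv 4$, $v_2(c)\in\{1,2,\ge 3\}$ and $n\equiv 6$, $v_2(c)\in\{0,1,\ge 2\}$ are each handled separately, with different dominant terms and, crucially, with unit-part computations modulo $4$ or $8$ that require more than the valuations of the $g_i$ (several terms lie on the same face and must be summed). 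The case $n\equiv 6$, $v_2(c)=1$ is particularly delicate: there $c^2+2n\equiv 0\pmod{16}$, which in your coordinates forces an unexpectedly large cancellation among the $g_i u^i$ and shifts the dominant term. Finally, your closing remark that ``$c^{2j}\equiv 1\pmod 8$ for $c$ odd'' is only relevant when $v_2(c)=0$; it does nothing for $v_2(c)\ge 1$, which is precisely the regime you left open. To complete the proof you must actually compute enough of the $v_2(g_i)$ (and the relevant residues) from your $g_i$-formula to reproduce the case analysis the paper performs in its four displayed blocks.
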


As a consequence, we obtain the following result:

\begin{corollary} \label{cor:non-exist}
If $r\equiv 2,3,4,5,6\pmod{8}$, then there do not exist rationals
$x_1$, \ldots, $x_{r+1}$, $y_1$, \ldots, $y_{r+1}$ such that
\begin{equation}\label{eq:D2r}
	\sum_{i=1}^{r+1} x_i y_i^k = a_k,
	\quad k=0,1,\dotsc,2r.
\end{equation}
\end{corollary}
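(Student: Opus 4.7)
The plan is to derive the corollary as a contradiction argument that reduces the existence of a rational solution to $\disc(H_{m;c})$ being a square in $\qq$, and then to invoke Theorem~\ref{thm:not_square} which forbids this $2$-adically.

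First I would set $m = r+1$ so that $2m-2 = 2r$ and the system \eqref{eq:D2r} becomes exactly the system \eqref{eq:haus-eq-1} in the ``almost tight'' regime $n = 2m-2$ treated in Subsection~\ref{subsec:HDeq}. Assume for contradiction that rationals $x_i$, $y_i$ satisfying \eqref{eq:D2r} exist. After discarding any indices with $x_i = 0$, the resulting tuple still satisfies the equations with a smaller value of $m$; but then the Stroud bound (Proposition~\ref{prop:haus-bound-1}) gives $2m - 2 \le 2m' - 1$ for the new number of nodes $m'$, forcing $m' \ge m$, so actually all $x_i \ne 0$. A similar argument handles repeated nodes: if two $y_i$ coincide we may combine the corresponding weights, again producing a formula of degree $2m-2$ with fewer distinct nodes, contradicting the Stroud bound. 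Therefore we may assume without loss of generality that $y_1,\dotsc,y_m$ are pairwise distinct rationals.

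Next I would invoke Proposition~\ref{prop:ratio-shohat} (which is the point where Riesz--Shohat enters): since $n = 2m-2$ and the $y_i$ are distinct rationals, there exists $c\in\qq$ such that $y_1,\dotsc,y_m$ are precisely the zeros of the quasi-Hermite polynomial
\[
H_{m;c}(x) = H_m(x) + c H_{m-1}(x).
\]
In particular $H_{m;c}(x)$ splits completely over $\qq$, so its discriminant is a nonzero square in $\qq$, namely
\[
\disc(H_{m;c}) = l^{\,2(m-1)} \prod_{1 \le i < j \le m} (y_i - y_j)^2,
\]
where $l = 2^m$ is the leading coefficient of $H_{m;c}$. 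In particular $\disc(H_{m;c})$ is a square in $\qq_2$.

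Finally I would apply Theorem~\ref{thm:not_square} with $n$ replaced by $m = r+1$. The hypothesis $r\equiv 2,3,4,5,6\pmod{8}$ translates to $m \equiv 3,4,5,6,7\pmod 8$, which is exactly the range in which Theorem~\ref{thm:not_square} asserts that $\disc(H_{m;c})$ is not a square in $\qq_2$ for any $c\in\qq_2$; in particular not for our $c \in \qq \subset \qq_2$. This contradicts the previous step, completing the proof. The only real obstacle is making the reduction to distinct nonzero data rigorous via the Stroud bound, since everything else is a direct consequence of the already-established Proposition~\ref{prop:ratio-shohat} and Theorem~\ref{thm:not_square}.
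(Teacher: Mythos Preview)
Your argument is correct and follows the same route as the paper: assume a rational solution, use the Stroud bound (Proposition~\ref{prop:haus-bound-1}) to force the $y_i$ to be distinct, apply Proposition~\ref{prop:ratio-shohat} to get $c\in\qq$ with $H_{r+1;c}$ splitting over $\qq$, and then contradict Theorem~\ref{thm:not_square}. Your treatment of the distinctness step is just a more explicit version of what the paper compresses into a single appeal to Proposition~\ref{prop:haus-bound-1}.
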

\begin{proof}[Proof of Corollary~\ref{cor:non-exist}]
Assume that $x_1$, \ldots, $x_{r+1}$,
$y_1$, \ldots, $y_{r+1}$ are a rational solution of \eqref{eq:D2r}.
Then by Proposition~\ref{prop:haus-bound-1}, 
$y_1$, \ldots, $y_{r+1}$ are distinct from each other.
By Proposition~\ref{prop:ratio-shohat}
there exists $c \in \qq$ such that
the zeros of $H_{r+1;c}(x)$ are $y_1$, \ldots, $y_{r+1}$.
Therefore $\disc(H_{r+1;c})$ is a square in the rationals by (\ref{eq:def-disc}),
which however contradicts Theorem~\ref{thm:not_square}.
\end{proof}

\begin{proof}[Proof of Theorem~\ref{thm:not_square}]
Let
\[
	D_n(c) = (-c)^n H_{n;c}\left(-\frac{c^2+2n}{2c}\right).
\]
By Theorem~\ref{thm:DQH},
\begin{equation}\label{eq:DQH}
	\disc(H_{n;c}) = 2^{n(3n-5)/2} \prod_{k=1}^{n-1} k^k \cdot D_n(c).
\end{equation}
It is easily seen that
\begin{equation}\label{eq:exponent}
	\frac{n (3 n - 5)}{2} \equiv \begin{cases}
		0 \pmod{2} & \text{if $n\equiv 0,3 \pmod{4}$,} \\
		1 \pmod{2} & \text{if $n\equiv 1,2 \pmod{4}$.}
	\end{cases}
\end{equation}
Let $t=2\cdot 3\cdot 4^2\cdot 5^2\cdot\dotsm (n-1)^{\lfloor(n-1)/2\rfloor}$.
Then we have
\[
	\prod_{k=1}^{n-1} k^k =	 \begin{cases}
		t^2 \cdot (n - 1)!! & \text{if $n$ is even,} \\
		t^2 \cdot (n - 2)!! & \text{if $n$ is odd.}
	\end{cases}
\]
By Lemma~\ref{lem:square}, we have $2^{-2v_2(t)}t^2\equiv 1\pmod{8}$.
Since $1\cdot 3\cdot 5\cdot 7\equiv 1\pmod{8}$,
\begin{equation}\label{eq:prodk}
	2^{-2v_2(t)} \prod_{k=1}^{n-1} k^k
	\equiv \begin{cases}
		1 \pmod{8} & \text{if $n\equiv 0,1,2,3\pmod{8}$,} \\
		3 \pmod{8} & \text{if $n\equiv 4,5\pmod{8}$,} \\
		7 \pmod{8} & \text{if $n\equiv 6,7\pmod{8}$.}
	\end{cases}
\end{equation}

By \eqref{eq:H-explicit},
\begin{equation}\label{eq:Dnc}
	D_n(c) = \sum_{k=0}^{\lfloor n/2\rfloor} a_k
	- \sum_{k=0}^{\lfloor(n-1)/2\rfloor} b_k,
\end{equation}
where
\begin{equation}
 \label{eq:ak}
\begin{gathered}
	a_k = (-1)^k \frac{n!}{k!(n-2k)!} c^{2k} (c^2 + 2 n)^{n-2k}, \\
	b_k = (-1)^k \frac{(n-1)!}{k!(n-1-2k)!} c^{2k+2} (c^2 + 2 n)^{n-1-2k}.
\end{gathered}
\end{equation}
\begin{equation}\label{eq:binomial}
	\frac{n!}{k!(n-2k)!} = \frac{n!}{(2k)!(n-2k)!}\cdot\frac{(2k)!}{k!}
	= \binom{n}{2k} 2^k (2k-1)!!,
\end{equation}
we have
\begin{equation}
\label{eq:v2ak}
\begin{gathered}
	v_2(a_k) = v_2\left(\binom{n}{2k}\right) + m_n(c) k + n v_2(c^2 + 2 n), \\
	v_2(b_k) = v_2\left(\binom{n-1}{2k}\right) 	+ m_n(c) k + 2 v_2(c) + (n - 1) v_2(c^2 + 2 n),
\end{gathered}
\end{equation}
where $m_n(c) = 1 + 2 v_2(c) - 2 v_2(c^2 + 2 n)$.

By \eqref{eq:ak},
\begin{equation}
\label{eq:ab0}
	a_0 - b_0 = 2 n (c^2 + 2 n)^{n-1}, \quad
	a_1 - b_1 = -2 (n - 1) c^2 (c^2 + n^2) (c^2 + 2 n)^{n-3}.
\end{equation}
By expanding the right-hand sides,
\[
	a_0 - b_0 = 2 n c^{2n-2} + 4 p_0(c), \quad a_1 - b_1 = -2 (n - 1) c^{2n-2} + 4 p_1(c),
\]
where $p_0(c)$ and $p_1(c)$ are polynomials in $c$ of degree $2n-4$ with integer coefficients.
By \eqref{eq:binomial}, we have $a_k,b_k\in 4\mathbb{Z}[c]$ for $k\ge 2$.
The degrees of $a_k$ and $b_k$ in $c$ are $2n-2k$ by definition.
Therefore, by \eqref{eq:Dnc},
\begin{equation}\label{eq:expansion}
	D_n(c) = 2 c^{2n-2} + 4 s_{n-2} c^{2n-4} + \dotsb + 4 s_1 c^2 + 4 s_0,
\end{equation}
where $s_i \in \zz$; if $v_2(c)\le 0$, then $v_2(D_n(c))=v_2(2c^{2n-2})=2(n-1)v_2(c)+1$.

We divide the proof into four cases.


\bigskip

\noindent
\textbf{The case $n\equiv 3,7\pmod{8}$.} 
If $v_2(c)\le 0$, then $v_2(D_n(c))=2(n-1)v_2(c)+1$
by \eqref{eq:expansion}.
By \eqref{eq:DQH}, \eqref{eq:exponent} and \eqref{eq:prodk}, we have $v_2(\disc(H_{n;c}))\equiv 1\pmod{2}$.
Therefore $\disc(H_{n;c})$ is not a square in $\mathbb{Q}_2$ by Lemma~\ref{lem:square}.

If $v_2(c)\ge 1$, then $v_2(c^2+2n)=1$ and $m_n(c)=2v_2(c)-1\ge 1$.
By \eqref{eq:v2ak},
we have $v_2(a_k)\ge v_2(a_0)+1$ and $v_2(b_k)\ge v_2(b_0)+1$ for $k\ge 1$.
Since $v_2(b_0)-v_2(a_0)=2v_2(c)-1\ge 1$,
we have $v_2(D_n(c))=v_2(a_0)=n$.
Since $n\equiv 3,7\pmod{8}$,
we have $v_2(\disc(H_{n;c}))\equiv 1\pmod{2}$
by \eqref{eq:DQH}, \eqref{eq:exponent} and \eqref{eq:prodk}.
Therefore $\disc(H_{n;c})$ is not a square in $\mathbb{Q}_2$
by Lemma~\ref{lem:square}.


\bigskip

\noindent
\textbf{The case $n\equiv 5\pmod{8}$.} If $v_2(c)\le -1$, then by \eqref{eq:expansion}
\[
	\frac{D_n(c)}{2 c^{2n-2}} = 1 + 2 s_{n-2} c^{-2} + \dotsb
	+ 2 s_1 c^{-2n} + 2 s_0 c^{-2n+2}
	\equiv 1 \pmod{8}.
\]
By Lemma~\ref{lem:square}, we get
$c^{2n-2}/2^{2(n-1)v_2(c)}\equiv 1\pmod{8}$ and so $D_n(c)/2^{2(n-1)v_2(c)+1}\equiv 1\pmod{8}$.
By \eqref{eq:DQH}, \eqref{eq:exponent} and \eqref{eq:prodk}, we have
\[
	2^{2e} \disc(H_{n;c}) \equiv 3 \cdot 1 \equiv 3 \pmod{8},
\]
where $e$ is an integer.
Therefore $\disc(H_{n;c})$ is not a square in $\mathbb{Q}_2$
by Lemma~\ref{lem:square}.

If $v_2(c)=0$, then $v_2(c^2+2n)=0$ and $m_n(c)=1$.
Since $n-1$ is even, we have $(c^2+2n)^{n-1}\equiv 1\pmod{8}$
by Lemma~\ref{lem:square}.
By \eqref{eq:ab0},
\[
	\frac{a_0 - b_0}{2} = n (c^2 + 2 n)^{n-1}
	\equiv 5 \cdot 1 \equiv 5 \pmod{8}.
\]
Since $n\equiv 5\pmod{8}$, and
by \eqref{eq:v2ak},
we have
\begin{align*}
	& v_2(a_1) = v_2(a_2) = 2,\quad v_2(a_k) = v_2\left(\binom{n}{2k}\right) + k \ge 3, \\
       & v_2(b_1) = v_2(b_2) = 2,\quad v_2(b_k) = v_2\left(\binom{n-1}{2k}\right) + k \ge 3
\end{align*}
for $k \ge 3$.
Therefore, by \eqref{eq:Dnc},
\[
	D_n(c) \equiv a_0 - b_0 + a_1 - b_1 + a_2 - b_2
	\equiv 2 \cdot 5 + 4 - 4 + 4 - 4 \equiv 2 \pmod{8},
\]
and so $D_n(c)/2\equiv 1\pmod{4}$.
By \eqref{eq:DQH}, \eqref{eq:exponent} and \eqref{eq:prodk}, we have
\[
	2^{2e} \disc(H_{n;c}) \equiv 3 \cdot 1 \equiv 3 \pmod{4},
\]
where $e$ is an integer.
Therefore $\disc(H_{n;c})$ is not a square in $\mathbb{Q}_2$
by Lemma~\ref{lem:square}.

If $v_2(c)\ge 1$, then $v_2(c^2+2n)=1$ and $m_n(c)=2v_2(c)-1\ge 1$.
Since $n\equiv 5\pmod{8}$,
by \eqref{eq:v2ak},
we have
\begin{align*}
	v_2(a_k) &= v_2\left(\binom{n}{2k}\right) + m_n(c) k + n \ge n + 2, \\
	v_2(b_k) &= v_2\left(\binom{n-1}{2k}\right) + m_n(c) k + 2 v_2(c) + n - 1
	\ge n + 3
\end{align*}
for $k\ge 1$.
Hence we have $a_k/2^n\equiv b_k/2^n\equiv 0\pmod{4}$ for $k\ge 1$.
Since $n-1$ is even, $(c^2+2n)^{n-1}/2^{n-1}\equiv 1\pmod{8}$
by Lemma~\ref{lem:square}.
By \eqref{eq:ab0},
\[
	\frac{a_0 - b_0}{2^n} = n \cdot \frac{(c^2 + 2 n)^{n-1}}{2^{n-1}}
	\equiv 5 \pmod{8}.
\]
Therefore, by \eqref{eq:Dnc},
\[
	\frac{D_n(c)}{2^n} \equiv \frac{a_0 - b_0}{2^n}
	\equiv 1 \pmod{4}.
\]
By \eqref{eq:DQH}, \eqref{eq:exponent} and \eqref{eq:prodk}, we have
\[
	2^{2e} \disc(H_{n;c}) \equiv 3 \cdot 1 \equiv 3 \pmod{4},
\]
where $e$ is an integer.
Therefore $\disc(H_{n;c})$ is not a square in $\mathbb{Q}_2$
by Lemma~\ref{lem:square}.


\bigskip

\noindent
\textbf{The case $n\equiv 4\pmod{8}$.}
If $v_2(c)\le 0$, then $v_2(D_n(c))=2(n-1)v_2(c)+1$ by \eqref{eq:expansion}.
By  \eqref{eq:DQH}, \eqref{eq:exponent} and \eqref{eq:prodk}, we have $v_2(\disc(H_{n;c}))\equiv 1\pmod{2}$.
Therefore $\disc(H_{n;c})$ is not a square in $\mathbb{Q}_2$
by Lemma~\ref{lem:square}.

If $v_2(c)=1$, then $v_2(c^2+2n)=2$ and $m_n(c)=-1$.
Since $n\equiv 4\pmod{8}$,
and by \eqref{eq:v2ak},
\begin{gather*}
	v_2(a_{n/2}) = \frac{3}{2} n, \quad
	v_2(a_{n/2-1}) = \frac{3}{2} n + 2, \quad
	v_2(a_k) \ge \frac{3}{2} n + 2, \\
	v_2(b_{n/2-1}) = \frac{3}{2} n + 1, \quad
	v_2(b_k) \ge \frac{3}{2} n + 2
\end{gather*}
for $k\le n/2-2$.

By \eqref{eq:ak} and \eqref{eq:binomial},
\begin{equation}\label{eq:an2}
	a_{n/2} = (-1)^{n/2} \frac{n!}{(n/2)! 0!} c^n
	= 2^{n/2} (n-1)!! c^n
\end{equation}
Since $n\equiv 4\pmod{8}$, we have $(n-1)!!\equiv 3\pmod{8}$
and $c^n/2^n\equiv 1\pmod{8}$. Hence
$a_{n/2}/2^{3n/2}\equiv 3\pmod{8}$.
Therefore, by \eqref{eq:Dnc},
\[
	\frac{D_n(c)}{2^{3n/2}} \equiv
	\frac{a_{n/2} - b_{n/2-1}}{2^{3n/2}}
	\equiv 3 - 2
	\equiv 1 \pmod{4}.
\]
By \eqref{eq:DQH}, \eqref{eq:exponent} and \eqref{eq:prodk},
\[
	2^{2e} \disc(H_{n;c}) \equiv 3 \cdot 1 \equiv 3 \pmod{4},
\]
where $e$ is an integer.
Therefore $\disc(H_{n;c})$ is not a square in $\mathbb{Q}_2$
by Lemma~\ref{lem:square}.

If $v_2(c)=2$, then $v_2(c^2+2n)=3$ and $m_n(c)=-1$.
By \eqref{eq:v2ak},
\begin{gather*}
	v_2(a_{n/2}) = \frac{5}{2} n, \quad
	v_2(a_{n/2-1}) = v_2(a_{n/2-2}) = \frac{5}{2} n + 2,
	v_2(a_k) \ge \frac{5}{2} n + 3, \\
	v_2(b_{n/2-1}) = \frac{5}{2} n + 2, \quad
	v_2(b_{n/2-2}) = \frac{5}{2} n + 3, \quad
	v_2(b_k) \ge \frac{5}{2} n + 4
\end{gather*}
for $k\le n/2-3$.
Since $v_2(c)=2$ and $n$ is even, $c^n/2^{2n}\equiv 1\pmod{8}$.
By \eqref{eq:an2}, we have $a_{n/2}/2^{5n/2}\equiv 3\pmod{8}$.
Hence, by \eqref{eq:Dnc} and \eqref{eq:an2},
\[
	\frac{D_n(c)}{2^{5n/2}} \equiv
	\frac{a_{n/2} + a_{n/2-1} + a_{n/2-2} - b_{n/2-1}}{2^{5n/2}} \equiv
       3 + 4 + 4 - 4 \equiv
       7 \pmod{8}.
\]
By \eqref{eq:DQH}, \eqref{eq:exponent} and \eqref{eq:prodk}, we have
\[
	2^{2e} \disc(H_{n;c}) \equiv 3 \cdot 7 \equiv 5 \pmod{8},
\]
where $e$ is an integer.
Therefore $\disc(H_{n;c})$ is not a square in $\mathbb{Q}_2$
by Lemma~\ref{lem:square}.

If $v_2(c)\ge 3$, then $v_2(c^2+2n)=3$ and $m_n(c)=2v_2(c)-5\ge 1$.
By \eqref{eq:v2ak},
\begin{gather*}
	v_2(a_0) = 3n, \quad
	v_2(a_k) = v_2\left(\binom{n}{2k}\right) + m_n(c) k + 3 n \ge 3 n + 2 \quad \text{for $k \ge 1$}, \\
	v_2(b_k) = v_2\left(\binom{n-1}{2k}\right) + m_n(c) k + 2 v_2(c) + 3 (n - 1) \ge 3 n + 3  \quad \text{for $k \ge 0$}.
\end{gather*}
Since $n$ is even, $(c^2+2n)^n/2^{3n}\equiv 1\pmod{8}$
by Lemma~\ref{lem:square}.
Hence, by \eqref{eq:Dnc},
\[
	\frac{D_n(c)}{2^{3n}} \equiv
	\frac{a_0}{2^{3n}}
	\equiv \frac{(c^2 + 2 n)^n}{2^{3n}}
	\equiv 1 \pmod{4}.
\]
By \eqref{eq:DQH}, \eqref{eq:exponent} and \eqref{eq:prodk}, we have
\[
	2^{2e} \disc(H_{n;c}) \equiv 3 \cdot 1 \equiv 3 \pmod{4},
\]
where $e$ is an integer.
Therefore $\disc(H_{n;c})$ is not a square in $\mathbb{Q}_2$
by Lemma~\ref{lem:square}.


\bigskip

\noindent
\textbf{The case $n\equiv 6\pmod{8}$.}
If $v_2(c)\le -1$, then we have
\[
	\frac{D_n(c)}{2^{2(n-1)v_2(c)+1}} \equiv 1 \pmod{8}
\]
as in the case where $n\equiv 5\pmod{8}$.
By \eqref{eq:DQH}, \eqref{eq:exponent} and \eqref{eq:prodk}, we have
\[
	2^{2e} \disc(H_{n;c}) \equiv 7 \cdot 1 \equiv 7 \pmod{8},
\]
where $e$ is an integer.
Therefore $\disc(H_{n;c})$ is not a square in $\mathbb{Q}_2$
by Lemma~\ref{lem:square}.

If $v_2(c)=0$, then $v_2(c^2+2n)=0$ and $m_n(c)=1$.
By \eqref{eq:v2ak}, we have
\begin{gather*}
	v_2(a_0) = 0, \quad v_2(a_1) = 1, \quad v_2(a_2) = 2, \quad v_2(a_k)\ge 3, \\
	v_2(b_0) = 0, \quad v_2(b_1) = v_2(b_2) = 2, \quad v_2(b_k)\ge 3
\end{gather*}
for $k\ge 3$.
Since $c^2\equiv 1\pmod{8}$ and $c^2+2n\equiv 5\pmod{8}$,
by \eqref{eq:ab0},
\begin{align*}
	a_0 - b_0 &= 2 n (c^2 + 2 n)^{n-1} \equiv 2 \cdot 6 \cdot 5
	\equiv 4 \pmod{8}, \\
	a_1 &= - n (n - 1) c^2 (c^2 + 2 n)^{n-2}
	\equiv - 6 \cdot 5 \cdot 1 \cdot 1 \equiv 2 \pmod{8}.
\end{align*}
Hence, by \eqref{eq:Dnc},
\[
	D_n(c) \equiv
	a_0 - b_0 + a_1 - b_1 + a_2 - b_2
	\equiv 4 + 2 - 4 + 4 - 4
	\equiv 2 \pmod{8}.
\]
Therefore $D_n(c)/2\equiv 1\pmod{4}$.
By \eqref{eq:DQH}, \eqref{eq:exponent} and \eqref{eq:prodk}, we have
\[
	2^{2e} \disc(H_{n;c}) \equiv 7 \cdot 1 \equiv 3 \pmod{4},
\]
where $e$ is an integer.
Therefore $\disc(H_{n;c})$ is not a square in $\mathbb{Q}_2$
by Lemma~\ref{lem:square}.

If $v_2(c)=1$, then $c^2+2n\equiv 4+12\equiv 0\pmod{16}$.
Hence we have $v_2(c^2+2n)\ge 4$ and $m_n(c)\le -5$.
By \eqref{eq:v2ak},
\begin{gather*}
	v_2(a_{n/2}) = \frac{3}{2} n, \quad 
	v_2(a_k) \ge 3 k + (n - 2 k) v_2(c^2 + 2 n) \ge \frac{3}{2} n + 5, \\
	v_2(b_k) \ge 3 k + 2 + (n - 1 - 2 k) v_2(c^2 + 2 n) \ge \frac{3}{2}n + 3
\end{gather*}
for $k\le n/2-1$.
By \eqref{eq:ak} and \eqref{eq:binomial},
\[
	a_{n/2} = (-1)^{n/2} \frac{n!}{(n/2)! 0!} c^n
	= -2^{n/2} (n-1)!! c^n
\]
Since $(n-1)!!\equiv 7\pmod{8}$, and by \eqref{eq:Dnc},
\[
	\frac{D_n(c)}{2^{3n/2}} \equiv \frac{a_{n/2}}{2^{3n/2}}
	= -(n-1)!! \frac{c^n}{2^n} \equiv 1 \pmod{8}.
\]
By \eqref{eq:DQH}, \eqref{eq:exponent} and \eqref{eq:prodk}, we have
\[
	2^{2e} \disc(H_{n;c}) \equiv 7 \cdot 1 \equiv 7 \pmod{8},
\]
where $e$ is an integer.
Therefore $\disc(H_{n;c})$ is not a square in $\mathbb{Q}_2$
by Lemma~\ref{lem:square}.

If $v_2(c)\ge 2$, then $v_2(c^2+2n)=2$ and $m_n(c)=2v_2(c)-3\ge 1$.
By \eqref{eq:v2ak},
\begin{gather*}
v_2(a_0)=2n,\; v_2(a_k)\ge 2n+1 \ \text{ for $k\ge 1$},\quad v_2(b_k)\ge 2n+2 \ \text{ for $k\ge 0$}.
\end{gather*}
Hence $v_2(D_n(c))=v_2(a_0)=2n$.
Since $n\equiv 6\pmod{8}$,
we have $v_2(\disc(H_{n;c}))\equiv 1\pmod{2}$
by \eqref{eq:DQH}, \eqref{eq:exponent}, \eqref{eq:prodk}.
So $\disc(H_{n;c})$ is not a square in $\mathbb{Q}_2$
by Lemma~\ref{lem:square}.
\end{proof}

We now translate Theorem~\ref{thm:not_square} in terms of rational points on curves.
Let
\begin{equation}
\label{eq:curve}
f_r(c)=\disc(H_{r+1;c})
\end{equation}
Then $f_r(c)$ is a polynomial in $c$ of degree $2r$ with integer coefficients.
Let $C_r$ be the hyperelliptic curve defined by $y^2=f_r(x)$.

\begin{theorem} \label{thm:hypell}
The curve $C_r$ has no $\mathbb{Q}_2$-rational points if and only if $r \equiv 2,3,4,5,6\pmod{8}$.
\end{theorem}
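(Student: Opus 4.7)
The statement is an ``if and only if''; we prove the two directions separately, using Theorem~\ref{thm:not_square} for the forward direction and explicit $2$-adic constructions for the backward direction. \textbf{Forward direction:} Assume $r \equiv 2,3,4,5,6 \pmod 8$, so $n := r+1 \equiv 3,4,5,6,7 \pmod 8$. Any affine $\qq_2$-point $(c,y)$ would satisfy $y^2 = f_r(c) = \disc(H_{n;c})$, which is ruled out by Theorem~\ref{thm:not_square}. For points at infinity on the smooth model of $C_r$, the relevant criterion is that the leading coefficient $\lambda_r$ of $f_r$ be a square in $\qq_2$; from Theorem~\ref{thm:DQH} and the expansion $D_n(c) = 2 c^{2n-2} + 4 s_{n-2} c^{2n-4} + \dotsb$ derived in the proof of Theorem~\ref{thm:not_square}, $\lambda_r = 2^{n(3n-5)/2+1} \prod_{k=1}^{n-1} k^k$. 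Writing $f_r(c) = \lambda_r c^{2r}(1 + \sum_{j \ge 1} b_j c^{-2j})$ (even in $c$ by Proposition~\ref{prop:DLC}(ii)), the factor $1 + \sum b_j c^{-2j}$ lies in $1 + 8\zz_2$ once $v_2(c)$ is sufficiently negative and is thus a square in $\qq_2$ by Lemma~\ref{lem:square}; hence if $\lambda_r$ were a square, $f_r(c)$ would be one too for such $c$, contradicting Theorem~\ref{thm:not_square}.

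\textbf{Backward direction:} Assume $r \equiv 0,1,7 \pmod 8$, so $n \equiv 1,2,0 \pmod 8$, and we construct a $\qq_2$-point. Two elementary facts underlie the argument: (a) $k\, v_2(k)$ is always even, hence $v_2(\prod_{k=1}^m k^k)$ is even for every $m$; (b) the odd part of $k^k \pmod 8$ equals $1$ when $k$ is even, and for odd $k$ with $k \equiv 1,3,5,7 \pmod 8$ equals $1,3,5,7$ respectively (since $3^2 \equiv 5^2 \equiv 7^2 \equiv 1 \pmod 8$ and $k$ is odd). For $n \equiv 0,1 \pmod 8$ (i.e., $r \equiv 7,0 \pmod 8$) we take $c = 0$: then $f_r(0) = \disc(H_n) = 2^{3n(n-1)/2} \prod_{k=1}^n k^k$ by Hilbert's formula (Remark~\ref{rem:Hilbert}). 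Since $v_2(n(n-1)) \ge 3$ in both residues, $3n(n-1)/2$ is even and (a) gives $v_2(f_r(0))$ even; letting $N_j = |\{1 \le k \le n : k \equiv j \pmod 8\}|$, one checks $N_3 \equiv N_5 \equiv N_7 \pmod 2$ in each case, so by (b) the odd part of $f_r(0)$ reduces mod~$8$ to either $1$ or $3 \cdot 5 \cdot 7 = 105 \equiv 1 \pmod 8$, and Lemma~\ref{lem:square} concludes. For $n \equiv 2 \pmod 8$ (i.e., $r \equiv 1 \pmod 8$), $c = 0$ fails because $n(n-1)/2$ is odd; we instead use a point at infinity, noting that $n(3n-5)/2$ is odd here and the analogous analysis of $\lambda_r = 2^{n(3n-5)/2+1} \prod_{k=1}^{n-1} k^k$ with $n-1 \equiv 1 \pmod 8$ yields $v_2(\lambda_r)$ even and odd part $\equiv 1 \pmod 8$.

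The main obstacle is the mod-$8$ bookkeeping in the backward direction: the key observation is that within each of the relevant residue classes of $n \pmod 8$, the counts $N_3, N_5, N_7$ are all congruent modulo $2$ (all equal to $m$ or all to $m+1$ for a common $m$ depending on $n$), which is exactly what forces the odd part of $\prod_{k=1}^n k^k$ (respectively $\prod_{k=1}^{n-1} k^k$) to be $\equiv 1 \pmod 8$ via the table of values $3^a 5^b 7^c \pmod 8$ for $a,b,c \in \{0,1\}$. The Hensel step in the forward direction, by contrast, is routine once one notes $1 + 8\zz_2 \subset (\qq_2^\times)^2$ via Lemma~\ref{lem:square}.
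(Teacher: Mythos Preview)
Your proof is correct and follows essentially the same route as the paper: forward direction via Theorem~\ref{thm:not_square} plus a leading-coefficient check for the points at infinity, and backward direction via $c=0$ when $r\equiv 0,7\pmod 8$ and via a point at infinity when $r\equiv 1\pmod 8$. The one pleasant variation is that instead of verifying directly (as the paper does, via \eqref{eq:exponent} and \eqref{eq:prodk}) that $\lambda_r$ is a non-square for $r\equiv 2,\dots,6\pmod 8$, you deduce it by contraposition from Theorem~\ref{thm:not_square} using the observation that $f_r(c)/(\lambda_r c^{2r})\in 1+8\zz_2$ for $v_2(c)\le -1$; the paper records this same observation only as a remark in the $r\equiv 1$ case.
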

\begin{proof}[Proof of Theorem~\ref{thm:hypell}]
Assume that $r\equiv 2,3,4,5,6\pmod{8}$.
By Theorem~\ref{thm:not_square}, it is sufficient to prove that
the points at infinity of $C_r$ are not $\mathbb{Q}_2$-rational.
By the proof of Theorem~\ref{thm:not_square},
the leading coefficient of $f_r(x)$ is equal to
\begin{equation}\label{eq:LC}
	2^{(r+1)(3r-2)/2+1} \prod_{k=1}^r k^k.
\end{equation}
It is not a square in $\mathbb{Q}_2$ by Lemma~\ref{lem:square}, \eqref{eq:exponent} and \eqref{eq:prodk}.
Therefore the points at infinity of $C_r$ are not $\mathbb{Q}_2$-rational.

Assume that $r\equiv 0,7\pmod{8}$.
By Remark~\ref{rem:Hilbert},
\[
	f_r(0) = \disc(H_{r+1}) = 2^{3r(r+1)/2} \prod_{k=1}^{r+1} k^k.
\]
If $r\equiv 0,7\pmod{8}$, then $3r(r+1)/2\equiv 0\pmod{2}$.
By \eqref{eq:prodk}, we have
\[
	2^{2e} \prod_{k=1}^{r+1} k^k \equiv 1 \pmod{8},
\]
where $e$ is an integer. Hence $f_r(0)$ is a square in $\mathbb{Q}_2$
by Lemma~\ref{lem:square}.
Therefore $C_r$ has a $\mathbb{Q}_2$-rational point.

Finally, assume that $r\equiv 1\pmod{8}$.
Then the leading coefficient of $f_r(x)$ is a square in 
$\mathbb{Q}_2$ by Lemma~\ref{lem:square}, \eqref{eq:prodk} and \eqref{eq:LC}.
Therefore the points at infinity of $C_r$ are $\mathbb{Q}_2$-rational.
\end{proof}

\begin{remark}
In fact, if $r\equiv 1\pmod{8}$, then $f_r(x)$ is a square
in $\mathbb{Q}_2$ when $v_2(x)$ is sufficiently small.
Therefore $C_r$ has a $\mathbb{Q}_2$-rational point in the affine part.
\end{remark}

\section{Conclusion and further remarks} \label{sec:conclusion}

We have derived explicit formulas for the resultants and discriminants of
all classical quasi-orthogonal polynomials of order one,
as a full generalization of the results of Dilcher and Stolarsky~\cite{DS05} and Gishe and Ismail~\cite{G06,GI08}.
Theorem~\ref{thm:res-general} for resultants is a rather general result, whereas it is not so easy to establish substantial generalizations of Theorem~\ref{thm:D-general} since our proof of Theorem~\ref{thm:D-general} employs the derivative properties of classical quasi-orthogonal polynomials. This is an interesting question, which is left for future work.

We have also dealt with Hausdorff-type equations
and created the relationship to
quasi-Hermite polynomials and quadrature formulas for Gaussian integration.
We have then proved a necessary and sufficient condition for
the hyperelliptic curve $C_r: y^2 = {\rm disc}(H_{r+1;x})$ to have $\qq_2$-rational points.
This not only provides a nonexistence theorem for solutions of Hausdorff-type equations,
but also gives us opportunities to use discriminants in the study of quadrature formulas and quasi-Hermite polynomials.

The hyperelliptic curve $C_r$ may possibly have $\qq_p$-rational points for prime numbers $p \ge 3$.
For example by using the function IsLocallySolvable in Magma~\cite{Magma}, we have examined $r \le 40$ and $p$ such that the curve $C_r$ has no $\qq_p$-rational points; see Table~\ref{tbl:prime}.
Accordingly, by the same argument as in Corollary~\ref{cor:non-exist},
the equations (\ref{eq:haus-eq-1}) for $(m,n) = (r+1,2r)$, $r \le 40$, have no rational solutions.
To improve Theorem~\ref{thm:not_square} is again left for future work.

\begin{table}[htbt]
    \begin{center}
    \caption{The prime numbers $p$ such that $C_r(\qq_p) = \emptyset$.}
    \label{tbl:prime}
\begin{tabular}{ll|ll|ll|ll}
$r$  & $p$   & $r$  & $p$         & $r$ & $p$             & $r$ & $p$     \\
\hline
1    &       &  11  & 2,3         & 21  & 2,3,11,13       & 31  & 11,31                   \\
2    & 2,3   &  12  & 2,5,7       & 22  & 2,11,13,17,19   & 32  & 23,31                   \\
3    & 2     &  13  & 2,5,7,11,13 & 23  & 3,17,23         & 33  & 3,17,23,29              \\
4    & 2     &  14  & 2,7,11,13   & 24  & 13,23           & 34  & 2,13,19,23,29,31        \\
5    & 2,3,5 &  15  & 7           & 25  & 11,19,23        & 35  & 2,5,7,13,23,29,31       \\
6    & 2,3,5 &  16  & 11          & 26  & 2,13            & 36  & 2,5,7,13,17,19,23,29,31 \\
7    & 7     &  17  & 7,11        & 27  & 2,11            & 37  & 2,13,19,23,29,37        \\
8    & 5,7   &  18  & 2,3,11      & 28  & 2,7,11,17,19,23 & 38  & 2,3,5,7,19,23,37        \\
9    & 3     &  19  & 2,17        & 29  & 2,11,29         & 39  & 17,37                   \\
10   & 2,5,7 &  20  & 2,5,11,13   & 30  & 2,3,17,19,23,29 & 40  & 5,7,17,31               \\
\end{tabular}
\end{center}
\end{table}

It may be also interesting to consider analogues of Theorem~\ref{thm:not_square} for
other classical quasi-orthogonal polynomials.
By Corollary~\ref{cor:DQC}, we have, for example
\[
   {\rm disc}(U_{r+1}) = \lim_{c \rightarrow 0} {\rm disc}(U_{r+1;c}) = 2^{(r+1)^2} (r+2)^{r-1}.
\]
This is a square in the rationals if $r$ is odd.
Therefore, for any odd integer $r$ and any prime number $p$,
the hyperelliptic curve $C_r$ has a $\qq_p$-rational point,
which does not give informations on solutions of (\ref{eq:haus-eq-1}).
Another interesting case will be the Legendre polynomials which
correspond to the integration $\frac{1}{2} \int_{-1}^1 \ dx$.
In this case, by a classical result of Holt~\cite{Ho12},
we see that there exist no rational solutions of  (\ref{eq:haus-eq-1}) for $n=2m-1$.
We are again naturally interested in the case when $n=2m-2$.

\section*{Acknowledgement}
The starting point of the study of Hausdorff-type equations was a private email conversation
the first author had with Bruce Reznick, who kindly told Hausdorff's work, more than four years ago.
The first author would like to express his sincere thanks and appreciation to him.

\appendix
\section{The classical orthogonal polynomials and some basic properties}
\label{appendix:COP}

We here describe some basic properties on Jacobi polynomials, Laguerre polynomials,
Hermite polynomials, which are used in the proof of our results.

\subsection{Jacobi polynomials}\label{appendix:JP}

The following informations can be found in~\cite[Chapter IV]{Sze}.

\bigskip

\noindent
\textbf{Closed form}
\begin{equation}\label{eq:J-explicit}
	P^{(\alpha,\beta)}_n(x)
     = \sum_{m=0}^n \binom{n+\alpha}{m} \binom{n+\beta}{n-m} \Big(\frac{x-1}{2} \Big)^{n-m} \Big( \frac{x+1}{2} \Big)^m.
\end{equation}

\noindent
\textbf{Three-term relation} 
\begin{multline}\label{eq:J-rec}
	2n (n + \alpha + \beta) (2n + \alpha + \beta - 2) P^{(\alpha,\beta)}_n(x) \\
	= (2n + \alpha + \beta - 1) \left( (2n + \alpha + \beta)
	(2n + \alpha + \beta - 2) x + \alpha^2 - \beta^2 \right)
	P^{(\alpha,\beta)}_{n-1}(x) \\
	{} - 2 (n + \alpha - 1) (n + \beta - 1) (2n + \alpha + \beta)
	P^{(\alpha,\beta)}_{n-2}(x).
\end{multline}

\noindent
\textbf{Derivative formulas} 
\begin{multline}\label{eq:DEJ1a}
	(2n + \alpha + \beta) (1 - x^2) \frac{d}{dx}P^{(\alpha,\beta)}_n(x) \\
	= -n \left( (2n + \alpha + \beta) x + \beta - \alpha \right)
	P^{(\alpha,\beta)}_n(x)
	+ 2 (n + \alpha) (n + \beta) P^{(\alpha,\beta)}_{n-1}(x),
\end{multline}
\begin{multline}\label{eq:DEJ1b}
	(2n + \alpha + \beta + 2) (1 - x^2) \frac{d}{dx}P^{(\alpha,\beta)}_n(x) \\
	= (n + \alpha + \beta + 1)
	\left( (2n + \alpha + \beta + 2) x + \alpha - \beta \right)
	P^{(\alpha,\beta)}_n(x) \\
	{} - 2 (n + 1) (n + \alpha + \beta + 1) P^{(\alpha,\beta)}_{n+1}(x),
\end{multline}

\subsection{Laguerre polynomials}\label{appendix:LP}

The following informations can be found in \cite[\S~5.1]{Sze}.

\bigskip

\noindent
\textbf{Closed form}
\begin{equation}\label{eq:L-explicit}
	L_n^{(\alpha)}(x) = \sum_{k=0}^n
	\binom{n+\alpha}{n-k} \frac{(-x)^k}{k!}.
\end{equation}

\noindent
\textbf{Three-term relation} 
\begin{equation}\label{eq:L-rec}
	n L_n^{(\alpha)}(x) = (-x + 2 n + \alpha - 1) L_{n-1}^{(\alpha)}(x)
	- (n + \alpha - 1) L_{n-2}^{(\alpha)}(x).
\end{equation}

\noindent
\textbf{Derivative formulas}
\begin{equation}\label{eq:DEL1}
	\frac{d}{dx} L_n^{(\alpha)}(x) = x^{-1}
	\left(n L_n^{(\alpha)}(x) - (n + \alpha) L_{n-1}^{(\alpha)}(x)\right).
\end{equation}

\subsection{Hermite polynomials}\label{appendix:HP}

The following informations can be found in \cite[\S~5.5]{Sze}.

\bigskip

\noindent
\textbf{Closed form}
\begin{equation}\label{eq:H-explicit}
	H_n(x) = \sum_{k=0}^{\lfloor n/2\rfloor}
	(-1)^k \frac{n!}{k! (n-2k)!} (2 x)^{n-2k}.
\end{equation}

\noindent
\textbf{Three-term relation} 
\begin{equation}\label{eq:H-rec}
	H_n(x) - 2 x H_{n-1}(x) + 2 (n - 1) H_{n-2}(x) = 0.
\end{equation}

\noindent
\textbf{Derivative formulas} 
\begin{equation}\label{eq:DEH1}
	H'_n(x) = 2 n H_{n-1}(x).
\end{equation}


\end{document}